\newtheorem{Theorem}{Theorem}[section]
\newtheorem{Lemma}[Theorem]{Lemma}
\newtheorem{Remark}[Theorem]{Remark}
\newtheorem{Hypothesis}{Hypothesis}
\newenvironment{Hypothesisp}[1]{
  
  \Hypothesisalt
}{\endHypothesisalt}
\numberwithin{equation}{section}
\begin{document}

\def\le{\left}
\def\r{\right}
\def\cost{\mbox{const}}
\def\a{\alpha}
\def\d{\delta}
\def\ph{\varphi}
\def\e{\epsilon}
\def\la{\lambda}
\def\si{\sigma}
\def\La{\Lambda}
\def\B{{\cal B}}
\def\A{{\mathcal A}}
\def\L{{\mathcal L}}
\def\O{{\mathcal O}}
\def\bO{\overline{{\mathcal O}}}
\def\F{{\mathcal F}}
\def\K{{\mathcal K}}
\def\H{{\mathcal H}}
\def\D{{\mathcal D}}
\def\C{{\mathcal C}}
\def\M{{\mathcal M}}
\def\N{{\mathcal N}}
\def\G{{\mathcal G}}
\def\T{{\mathcal T}}
\def\R{{\mathbb R}}
\def\I{{\mathcal I}}

\def\bw{\overline{W}}
\def\phin{\|\varphi\|_{0}}
\def\s0t{\sup_{t \in [0,T]}}
\def\lt{\lim_{t\rightarrow 0}}
\def\iot{\int_{0}^{t}}
\def\ioi{\int_0^{+\infty}}
\def\ds{\displaystyle}
\def\pag{\vfill\eject}
\def\fine{\par\vfill\supereject\end}
\def\acapo{\hfill\break}

\def\beq{\begin{equation}}
\def\eeq{\end{equation}}
\def\barr{\begin{array}}
\def\earr{\end{array}}
\def\vs{\vspace{.1mm}   \\}
\def\rd{\reals\,^{d}}
\def\rn{\reals\,^{n}}
\def\rr{\reals\,^{r}}
\def\bD{\overline{{\mathcal D}}}
\newcommand{\dimo}{\hfill \break {\bf Proof - }}
\newcommand{\nat}{\mathbb N}
\newcommand{\E}{\mathbb E}
\newcommand{\Pro}{\mathbb P}
\newcommand{\com}{{\scriptstyle \circ}}
\newcommand{\reals}{\mathbb R}

\newcommand{\red}[1]{\textcolor{red}{#1}}

\def\Amu{{A_\mu}}
\def\Qmu{{Q_\mu}}
\def\Smu{{S_\mu}}
\def\H{{\mathcal{H}}}
\def\Im{{\textnormal{Im }}}
\def\Tr{{\textnormal{Tr}}}
\def\E{{\mathbb{E}}}
\def\P{{\mathbb{P}}}
\def\span{{\textnormal{span}}}
\title{Averaging principle for slow-fast systems of stochastic PDEs with rough coefficients}
\author{Sandra Cerrai\thanks{Partially supported by  NSF grants DMS-1712934 - {\em Analysis of Stochastic Partial Differential Equations with Multiple
Scales} and DMS-1954299 - {\em Multiscale Analysis of Infinite-Dimensional Stochastic Systems}}\\
\normalsize University of Maryland\\
\normalsize College Park, USA
\and
Yichun Zhu\\
\normalsize University of Maryland\\
\normalsize College Park, USA}
\date{}

\date{}
\maketitle

\begin{abstract}
In this paper, we consider a class of slow-fast systems of stochastic partial differential equations  where the nonlinearity in the slow equation is not continuous and unbounded. We first provide conditions that ensure the existence of a martingale solution. Then we prove that the laws of the slow motions are tight, and any of their limiting points is a martingale solution for a suitable averaged equation. Our results apply to systems of stochastic reaction-diffusion equations where the reaction term in the slow equation is only continuous and has polynomial growth.

\end{abstract}

\section{Introduction}
\label{sec1}

In this paper, we study the validity of an averaging principle for a class of slow-fast systems of stochastic partial differential equations. These systems are characterized by having weak regularity assumptions for the nonlinearity in the slow equation. Due to the weakness of these conditions, the authors are only able to prove the existence of martingale solutions and characterize the distributions of limiting points for the slow motions as solutions of a suitable averaged equation. After the publication of \cite{SCMF1} and \cite{SC1}, there have been many papers in recent years that have studied the validity of averaging principles for various types of slow-fast systems of SPDEs, but these previous papers have all assumed well-posedness in appropriate functional spaces, whereas the systems considered in this paper are so irregular that it is not  possible to have the existence and uniqueness of solutions, not even in the martingale sense, for either the slow-fast system or the limiting averaged equation.

We consider the following system of stochastic PDEs on a separable Hilbert space $H$, endowed with the scalar product $\langle \cdot, \cdot \rangle$ and the corresponding norm $\|\cdot \|$,
\begin{equation}\label{eq spde-intro}
\begin{aligned}
\begin{cases}
\ds{du_\e(t)= \left[A_1 u_\e(t) dt + F_1(t, u_\e(t), v_\e(t)) \right]\,dt + Q_1\,dW^{1}_t,\ \ \ \ \ u_{\e}(0)=u_0,}\\[10pt]
\ds{dv_\e(t)= \frac{1}{\e}\left[ A_2 v_\e(t) + F_2(u_\e(t), v_\e(t)) \right]\,dt + \frac{1}{\sqrt{\e}}\, Q_2\,dW^{2}_t,\ \ \ \ \ v_{\e}(0)=v_0.}
\end{cases}
\end{aligned}
\end{equation}
 The stochastic perturbations are given by two independent cylindrical Wiener processes in $H$,  which are white in time and colored in space, with covariance operators $Q_1^2$ and $Q_2^2$, respectively.
 The operators $A_1$ and $A_2$ are the generators of two analytic semigroups $S_1(t)$ and $S_2(t)$, respectively. 
 
 The non-linearity $F_2$ in the fast equation is Lipschitz continuous, but the non-linearity $F_1$ in the slow equation is  not assumed to be continuous nor  bounded or linearly bounded.    
More precisely, we assume that   $F_1:[0,T]\times D(F_1)\subseteq [0,T]\times H\times H \to H$ is measurable and there exists a family of bounded and measurable mappings $F^{\,\theta}_1$, depending on a parameter $\theta \in (0,1)$ and defined on $[0,T]\times H\times H$ with values in $H$, such that
\begin{enumerate}
\item[C1.] for all $t \in\,[0,T]$  and $h \in\,H$ the mapping 
\[(x,y) \in\,H\times H\mapsto \langle F^\theta_1(t,x,y),h\rangle \in\,\mathbb{R},\]
is continuous;
\item[C2.] there exists a convex and lower semicontinuous mapping $V:H\times H \to [1, \infty]$ such that
\[\| F^{\theta}_1(t,x,y) \|^2 \leq \| F_1(t,x,y) \|^2 \leq V(x,y),\ \ \ \ (x,y) \in\,H\times H,
\]
and
\[\| F_1(t,x,y)- F^{\theta}_1(t,x,y) \| \leq \theta\, V(x,y),\ \ \ \  (x,y) \in\,D(F_1)
\]
for all $t \in\,[0,T]$ and $\theta \in\,(0,1)$.
\end{enumerate}

We will show that these conditions are satisfied for example when $H=L^2(\mathcal{O})$, for some bounded and smooth domain $\mathcal{O}\subset \mathbb{R}^d$, and 
\[F_1(t,x,y)(\xi)=b(t,\xi,x(\xi),y(\xi)),\ \ \ \ \ t \in\,[0,T],\ \ \ \xi \in\,\mathcal{O},\]
for some  function $b:[0,T]\times \mathcal{O}\times \mathbb{R}^2\to\mathbb{R}$ which is just continuous and such that $b(t,\xi,\cdot):\mathbb{R}^2\to\mathbb{R}$ has polynomial growth and the following one-sided growth condition holds
\[\sup_{t \in\,[0,T]}b(t,\xi,\si+\rho,\la)\si\leq c\left(\alpha(\xi)+|\si|^2+|\la|^{\kappa_1}+|\rho|^{\kappa_2}	\right),\]
for all $(t,\xi) \in\,[0,T]\times \mathcal{O}$ and $(\la,\si,\rho) \in\,\mathbb{R}^3$ and for some positive constants $c, \kappa_1, \kappa_2$ and some function $\alpha \in\,L^1(\mathcal{O})$.  In this regard, it is worth noting that in \cite{SC2}, systems of reaction-diffusion equations with coefficients having polynomial growth were considered. However, in that case, the coefficients were assumed to be regular, stronger dissipativity conditions were imposed, and the coefficient of the slow equation had polynomial growth only in the slow variable, while the coefficient in the fast equation could have polynomial growth only in the fast variable. In contrast, in the example we consider in this paper, the reaction term in the slow equation is only continuous and has polynomial growth, both in the slow and in the fast variables.

Conditions C1. and C2.  are not new in the current literature. They have been introduced in several papers (see e.g. \cite{VBDPMR1}, \cite{VBDPMR2} and \cite{VBDPMR3}) in order to study the existence  of measure-valued solutions to the Fokker Planck equations associated with a wide class of stochastic differential equations both in finite and in infinite dimensional spaces.
Here, we are proving how the same kind of assumptions imply the existence of $C([0,T];H)$-valued martingale solutions for  the following  class of stochastic evolution equations in arbitrary separable Hilbert spaces
\[dX(t)=\left[A X(t)+F(t,X(t))\right]\,dt+Q\,dW_t,\ \ \ \ \ X(0)=x.\]

This result applies also to systems like \eqref{eq spde-intro}, once we rewrite them as
\[dX^\e(t)=\left[A^\e X^\e(t)+F^\e(t,X^\e(t))\right]\,dt+Q^\e\,dW_t,\ \ \ \ \ X^\e(0)=(u_0,v_0),\]
where  
\[A_\e:=(A_1, \e^{-1} A_2),\ \   F^\e:=(F_1,\e^{-1} F_2),\ \ Q^\e:=(Q_1,\e^{-1/2} Q_2),\]
and   where $W_t:=(W^1_t, W^2_t)$. This means that for every $(u_0,v_0) \in\,D_V:=\{V<\infty\}$ and $\e \in\,(0,1)$  there exists a martingale solution. That is there exists a stochastic basis, $({\Omega}^{\e}, {\mathcal{F}}^{\e}, \{{\mathcal{F}}_t^{\e}\}_{t \geq 0}, {\mathbb{P}}^{\e})$, an adapted cylindrical Wiener process ${W}^{\e}_t=(W^{1,\e}_t, W^{2,\e}_t)$ in $H\times H$, and an adapted process ${X}^{\e}=(u_\e,v_\e) \in\,L^2(\Omega^\e;C([0,T];D(F_\e))$ such that
\[X^{\e}(t) = S^{\e}(t)(u_0,v_0) + \int_0^t S^{\e}(t-s) F^{\e}(s, X^{\e}(s)) ds +\int_0^t S^{\e}(t-s) Q^{\e} d{W}^{\e}_s,\ \ \ \ t\geq 0,\] 
where $S^\e(t)$ is the semigroup generated by $A^\e$.

Next, we assume that there exists some constant $c>0$ such that for every initial condition $(u_0,v_0) \in\,D_V$
\begin{equation}
	\label{intro1}
\sup_{\e \in\,(0,1)}\int_0^T\mathbb{E}^{\e} V(X^{\e}(t))\,dt\leq c \,V(u_0,v_0),	
\end{equation}
and we prove that under this condition the family $\{\mathcal{L}(u_\e)\}_{\e \in\,(0,1)}$ is tight in $C([0,T];H)$.
Our purpose is showing that any weak limit point $\mu_{\text{\tiny{slow}}}$ of 
$\{\mathcal{L}(u_\e)\}_{\e \in\,(0,1)}$ in $C([0,T];H)$ is a martingale solution of a suitable limiting stochastic equation in $H$ whose non-linearity is obtained by averaging $F_1$ with respect to the invariant measure of the fast equation.

We would like to stress that the roughness of $F_1$ makes the definition of the averaged coefficient $\bar{F}_1$ quite delicate and requires some approximation procedures. Moreover, $\bar{F}_1$ inherits from $F_1$ the same roughness and the proof of the averaging limit requires several new ideas compared to what is available in the existing literature on the  averaging for slow-fast systems of SPDEs with regular coefficients.

\medskip

Before concluding this introduction,  we would like to describe the plan of the paper. After introducing in Section 2 all the notations and assumptions that will used throughout the paper, in Section 3, we describe the ergodic properties of the fast equation, define the average of the nonlinearity $F_1$ in terms of the invariant measure of the fast equation, introduce the corresponding averaged limiting equation, and state the main theorem of the paper. In Section 4, we prove several a-priori bounds for the solution of system \eqref{eq spde-intro} and we give a proof of the tightness of the laws of the solutions of the slow equation. In  Section 5, we prove the validity of the averaging limit, by using a localization- in-time argument. In Section 6, we provide an example of a class of systems of stochastic reaction-diffusion equations in bounded domains of $\mathbb{R}^d$, where the slow equation has a reaction coefficient that is only continuous and has polynomial growth, both in the slow and in the fast variable. Finally, in Appendix A, we give a proof of the existence of a $C([0,T];H)$-valued martingale solution for a general class of stochastic evolution equations with rough coefficients, that cover also the system \eqref{eq spde-intro}.

\section{Notations and assumptions}
\label{sec2}
Let $H$ be a separable Hilbert space, 
endowed with the scalar product $\langle \cdot, \cdot \rangle$ and the corresponding norm $\|\cdot \|$. We denote by $B_b(H)$ the Banach space of Borel bounded functions $\varphi:H\to \mathbb{R}$, endowed with the norm
\[\Vert \varphi\Vert_0:=\sup_{x \in\,H}\vert \varphi(x)\vert.\]
We denote by $C_b(H)$ the subspace of continuous functions. Moreover, we denote by $\text{Lip}_b(H)$ the subspace of all Lipschitz continuous functions. $\text{Lip}_b(H)$ is a Banach space, endowed with the norm
\[\Vert\varphi\Vert_{\text{\tiny{Lip}}}:=\Vert \varphi\Vert_0+[\varphi]_{\text{\tiny{Lip}}}:=\Vert \varphi\Vert_0+\sup_{\substack{x, y \in\,H\\x\neq y}}\frac{\vert\varphi(x)-\varphi(y)\vert}{\Vert x-y\Vert}.\]

In the present paper, we are dealing with the following system of stochastic equations in the space $H$
\begin{equation}\label{eq spde}
\begin{aligned}
\begin{cases}
\ds{du_\e(t)= \left[A_1 u_\e(t) dt + F_1(t, u_\e(t), v_\e(t)) \right]dt + Q_1\,dW^{1}_t,\ \ \ \ \ u_{\e}(0)=u_0,}\\[10pt]
\ds{dv_\e(t)= \frac{1}{\e}\left[ A_2 v_\e(t) + F_2(u_\e(t), v_\e(t)) \right]dt + \frac{1}{\sqrt{\e}}\, Q_2\,dW^{2}_t,\ \ \ \ \ v_{\e}(0)=v_0.}
\end{cases}
\end{aligned}
\end{equation}
Throughout the present section, we will introduce  the assumptions we make on the coefficients  and the noise and we will introduce some notations.
 
 \begin{Hypothesis}\label{h1}

\begin{enumerate}

\item
The operators $(A_1, D(A_1))$ and $(A_2, D(A_2))$ generate the analytic semigroups $S_1(t)$ and $S_2(t)$ in $H$, respectively. Moreover, there exist two complete orthonormal systems, $\{ e_{1,k} \}_{k \in \mathbb{N}}$ and $\{ e_{2,k} \}_{k \in \mathbb{N}}$ in $H$, and two non-decreasing sequences of positive real numbers $\{\a_{1,k}\}_{k \in \mathbb{N}}$ and $\{\a_{2,k}\}_{k \in \mathbb{N}}$, such that 
\[
A_1 e_{1,k}= -\a_{1,k} e_{1,k},\ \ \ \ A_2 e_{2,k}= -\a_{2,k} e_{2,k},\ \ \ \ k \in \mathbb{N}.\]

\item
The bounded linear operators $Q_1$ and $Q_2$ share the same set of eigenvectors with the operators $A_1$ and $A_2$, respectively. Namely, there exist two sequences of real numbers $\{\lambda_{1,k}\}_{k \in \mathbb{N}}$ and $\{\lambda_{2,k}\}_{k \in \mathbb{N}}$ such that 
\[
Q_1 e_{1,k}= \lambda_{1,k} e_{1,k},\ \ \ \ Q_2 e_{2,k}= \lambda_{2,k} e_{2,k},\ \ \ \ k \in \mathbb{N}.
\]
\item
There exist two strictly positive constants $\gamma_1$ and $\gamma_2$  such that
\begin{equation}
\label{reg}
\sum_{k\in \mathbb{N}} \lambda_{1,k}^2 \a_{1,k}^{2\gamma_1-1} <\infty,\ \ \ \ \sum_{k\in \mathbb{N}} \lambda_{2,k}^2 \a_{2,k}^{2\gamma_2-1} <\infty.	
\end{equation}
\end{enumerate}
\end{Hypothesis}
\begin{Remark}
{\em \begin{enumerate}
 \item 	IN view of Condition 1. in Hypothesis \ref{h1}, we have
\begin{equation}
\label{alpha}
\inf_{k \in\,\mathbb{N}}	\a_{1,\,k}=\a_{1, 1}>0,\ \ \ \ \ \ \inf_{k \in\,\mathbb{N}}	\a_{2,\,k}=\a_{2, 1}>0.
\end{equation}
\item Thanks to \eqref{alpha},  for every $\delta\geq 0$ we can define the norms
\[\Vert x\Vert_{D((-A_i)^\delta)}:=\Vert (-A_i)^\delta x\Vert,\ \ \ \ \ i=1, 2.\]
Notice that for every $\delta>0$ the space $D((-A_i)^\delta)$, endowed with the norm above, is compactly embedded in $H$, for $i=1, 2$.
 \end{enumerate}

}	
\end{Remark}

In what follows, we will denote by $\mathcal{H}_2$ the  product space $H \times H$,  endowed the inner product and the corresponding norm
\[\langle (x_1,y_1), (x_2,y_2)\rangle_{\mathcal{H}_2}:= \langle x_1, x_2 \rangle + \langle y_1, y_2 \rangle,\ \ \ \ \|(x,y)\|_{\mathcal{H}_2}:= \sqrt{\|x\|^2 + \|y\|^2}.\]
 Clearly, the family of vectors
\[
\Lambda:=\{(e_{1,j},0)\}_{j \in\,\mathbb{N}}\cup \{(0,e_{2,j})\}_{j \in\,\mathbb{N}}\]
is an orthonormal basis for the Hilbert space $\mathcal{H}_2$.

For every $\e>0$, we define the operators $A^{\e}$, $Q^{\e}$ by setting
\[
A^{\e}(x,y) = (A_1x, \e^{-1} A_2 y), \ \ \ \ (x,y) \in\,D(A_1)\times D(A_2)\subset \mathcal{H}_2,\]
and 
\[Q^{\e}(x,y) = (Q_1x, \e^{-1/2} Q_2y),\ \ \ \ (x,y) \in \mathcal{H}_2.
\]
It is immediate to check that $\Lambda$ is a set of eigenvectors which diagonalizes the operators $A^{\e}$ and $Q^{\e}$ simultaneously and $\{\a_{i,k}\}_{i=1,2,\  k \in \mathbb{N}}$ are eignevalues of $A^{\e}$, while $\{\lambda_{i,k}\}_{i=1,2,\ k \in \mathbb{N}}$ are eigenvalues of $Q^{\e}$. Due to \eqref{reg}
\[
\sum_{k \in \mathbb{N}} \lambda_{1,k}^2 \a_{1,k}^{2\gamma_1-1}+ \lambda_{2,k}^2 \a_{2,k}^{2\gamma_2-1} < \infty.
\] 
Moreover, if $S^{\e}(t)$ is the analytic semigroup generated by $A^{\e}$ on the space $\mathcal{H}_2$, we have
\[
S^{\e}(t)(x,y)= (S_1(t)\,x, S_2(t/\e)\,y),\ \ \ \ (x,y) \in\,\mathcal{H}_2.
\]

\medskip

Concerning the nonlinearity $F_1$, we shall assume the following conditions.
\begin{Hypothesis}\label{h2}
The mapping  $F_1:[0,T]\times D(F_1)\subseteq [0,T]\times \mathcal{H}_2 \to H$ is measurable. Moreover, there exists a family of mappings $\{F^{\,\theta}_1\}_{\theta \in (0,1)}$, defined on $[0,T]\times \mathcal{H}_2$ with values in $H$, such that the following conditions hold.
\begin{enumerate}
\item
For every $\theta \in\,(0,1)$, the mapping $F^{\theta}_1:[0,T]\times \mathcal{H}_2\to H$ is measurable and bounded. Moreover, for all $t \in\,[0,T]$  and $h \in\,H$ the mapping 
\[(x,y) \in\,\mathcal{H}_2\mapsto \langle F^\theta_1(t,x,y),h\rangle \in\,\mathbb{R},\]
is continuous.
\item
There exists a convex and lower semicontinuous mapping $V:\mathcal{H}_2 \to [1, \infty]$, with $D_V:=\left\{V<\infty\right\}\subseteq D(F_1)$, such that for all $\theta \in\,(0,1)$ and $t \in\,[0,T]$
\begin{equation} \label{sy400}
\| F^{\theta}_1(t,x,y) \|^2 \leq \| F_1(t,x,y) \|^2 \leq V(x,y),\ \ \ \ (x,y) \in\,\mathcal{H}_2,
\end{equation}
and
\begin{equation} \label{sy33}
\| F_1(t,x,y)- F^{\theta}_1(t,x,y) \| \leq \theta\, V(x,y),\ \ \ \  (x,y) \in\,D(F_1).
\end{equation}
\end{enumerate}
\end{Hypothesis}

\begin{Remark}
{\em \begin{enumerate}
 \item When $(x,y) \notin\,D(F_1)$, we take $\Vert F_1(t,x,y)\Vert=+\infty$. This allows to interpret \eqref{sy400}, for every $(x,y) \in\,\mathcal{H}_2.$
\item In Hypothesis \ref{h1}, we have assumed that $\alpha_{1, 1} >0$. However, this is not necessary. Actually if we define  $\hat{A}_1:=A_1-\delta I$, and $\hat{F}_1(t,\cdot)=F_1(t,\cdot)+\delta I$, for some $\delta>-\a_{1,1}$, we have that 
\begin{equation}
\hat{A}_1x + \hat{F}_1(t,x,y)=A_1x+F_1(t,x,y),\ \ \ \ \ x \in\,D(A),\ \ (x,y) \in\,D(F_1),\ \ \ t \in\,[0,T],
\end{equation}
and $\hat{A}_1$ is a negative operator. Now, for every $\theta \in\,(0,1)$ we define
\[\hat{F}_1^\theta(t,x,y):=F_1^\theta(t,x,y)+\frac{\delta\,x}{1+\theta\ \Vert x\Vert},\ \ \ \ (t,x,y) \in\,[0,T]\times \mathcal{H}_2,\]
we have that $\hat{F}_1^\theta$ is measurable and bounded and for every $h \in\,H$ and $t \in\,[0,T]$ the mapping $(x,y) \in\,\mathcal{H}_2 \mapsto \langle F_1^\theta(t,x,y),h\rangle \in\,\mathbb{R}$ is  continuous. Moreover,
\[\Vert \hat{F}_1^\theta(t,x,y)\Vert^2\leq 2\,\Vert F_1^\theta(t,x,y)\Vert^2+\delta^2\Vert x\Vert^2.\]
Therefore, if we define 
\begin{equation}
\hat{V}(x,y)= c\,(V(x,y) +  \|(x,y)\|_{\mathcal{H}_2} ^2),
\end{equation} 
due to \eqref{app1} there exists some sufficiently large constant $c>0$ such that 
\[\Vert \hat{F}_1^\theta(t,x,y)\Vert^2\leq \hat{V}(x,y),\ \ \ \ \ (t,x,y) \in\,[0,T]\times \mathcal{H}_2.\]
Moreover,
\[\begin{array}{ll}
\ds{\Vert \hat{F}_1(t,x,y)-\hat{F}_1^\theta(t,x,y)\Vert}  &  \ds{\leq \Vert F_1(t,x,y)-F_1^\theta(t,x,y)\Vert +\delta\,\Vert x-x/(1+\theta \Vert x\Vert)\Vert}\\[10pt]
&\ds{\leq \theta V(x,y)+ \theta \frac{\delta\,\Vert x\Vert}{1+\theta\Vert x\Vert}\leq \theta\left(V(x,y)+\delta \Vert x\Vert\right)\leq \theta \hat{V}(x,y),}
\end{array}\]
if $c$ is sufficiently large.
As for  Hypothesis \ref{h3-app}, thanks to estimate \eqref{sy1000} below and  \eqref{eqh3} we get
\begin{equation}
\begin{aligned}
\int_0^T\mathbb{E}^n \hat{V}(X_{n}(t))\,dt \leq & \,c\int_0^T\left(\mathbb{E}^n V(X_n(t)) +  \mathbb{E}\|X_{n}(t)\| ^{2}\right)\,dt\leq c\left(V(x,y) +\|(x,y)\|_{\mathcal{H}_2} ^{2}\right).
\end{aligned}
\end{equation}
So we can recover \eqref{eqh3} for $\hat{V}$, once we take $c$ large enough.
	
 \end{enumerate}
  }	
\end{Remark}

For the nonlinear coefficient $F_2$, we make the following assumptions.
\begin{Hypothesis}\label{h3}
The mapping $F_2:\mathcal{H}_2 \to H$ is Lipschitz continuous. Moreover, if we define
\[
L_2:=\sup_{\substack{{x \in\,H}\\{y_1,\, y_2 \in\,H}}}\frac{\Vert F_2(x,y_1)- F_2(x,y_2)\Vert}{\Vert y_1-y_2\Vert},
\] 
we have  
\begin{equation}
\label{dissi}	
\omega:=\a_{2,1}- L_2 > 0.
\end{equation}

\end{Hypothesis}

Next, for every $\e>0$, we define the nonlinear mapping $F^{\e}:[0,T]\times D(F^\e)\to \mathcal{H}_2,$ by setting
\[
F^{\e}(t,x,y):=\left(F_1(t,x,y) ,\e^{-1} F_2(x,y)\right),\ \ \ \ t \in\,[0,T],\ \ (x,y) \in\,D(F^\e),
\]
with $D(F^\e):=D(F_1)\subseteq \mathcal{H}_2$.
By using  the notations we have introduced above, we can rewrite system \eqref{eq spde} as the following stochastic evolution equation on the Hilbert space $\mathcal{H}_2$
\begin{equation}\label{eq spde0}
dX(t) = \left[A^{\e} X(t) + F^{\e} (t,X(t))\right] dt + Q^{\e}dW^{}_t, \ \ \ \ X_\e(0)=(u_0, v_0),
\end{equation}
where $W_t=(W^1_t,W^2_t)$ is a cylindrical Wiener process in $\mathcal{H}_2$.

\medskip

In what follows, we shall  make the following fundamental  assumption.
\begin{Hypothesis}\label{h4}
For every $\e \in\,(0,1)$ and every initial condition $(u_0,v_0) \in\,D_V$,  there exists a martingale solution for \eqref{eq spde0}. This means that once fixed $(u_0,v_0) \in\,D_V$, for every $\e \in\,(0,1)$ there exist a stochastic base, $({\Omega}^{\e}, {\mathcal{F}}^{\e}, \{{\mathcal{F}}_t^{\e}\}_{t \geq 0}, {\mathbb{P}}^{\e})$, an adapted cylindrical Wiener process ${W}^{\e}_t=(W^{1,\e}_t, W^{2,\e}_t)$ in $\mathcal{H}_2$, and an adapted process ${X}^{\e} \in\,L^2(\Omega^\e;C([0,T];D(F^\e))$ such that
\[
{X}^{\e}(t) = S^{\e}(t)(u_0,v_0) + \int_0^t S^{\e}(t-s) F^{\e}(s, {X}^{\e}(s)) ds + {W}^{\e}_{A^{\e}} (t),\ \ \ \ t \in\,[0,T],
\]
where ${W}^{\e}_{A^\e}(t)$ is the   stochastic convolution
\[
{W}^{\e}_{A^\e}(t):= \int_0^t S^{\e}(t-s) Q^{\e} d{W}^{\e}_s,\ \ \ \ t\geq 0.
\] 
Moreover, there exists a constant $M>0$, independent of  $(u_0,v_0) \in\,D_V$,  such that 
\begin{equation}
\label{eq h4}
\sup_{\e \in\,(0,1)}\int_0^T\mathbb{E}^{\e} V(X^{\e}(t))\,dt\leq M \,V(u_0,v_0).
\end{equation}

\end{Hypothesis}

\begin{Remark}
	{\em In fact, the existence of a martingale solution for the slow-fast system \eqref{eq spde0}, for every initial condition $(u_0,v_0) \in\,D_V$ and every parameter $\e \in\,(0,1)$ is a consequence of Theorem }
\end{Remark}

In what follows, we shall denote
\[
{W}_{A_1}^{\e}(t)= \int_0^t S_1(t-s) Q_1 d{W}_s^{1,\e},\ \ \ \ {W}_{A_2}^{\e}(t)= \frac{1}{\sqrt{\e}}\int_0^t S_2\left((t-s)/\e\right) Q_2 d{W}_s^{2,\e}.
\]
With these notations, the slow and fast  components of $X^{\e}_{}(t)= (u^{\e}_{}(t), v^{\e}_{}(t))$, solve the equations
\begin{equation}\label{equ}
{u}^{\e}_{}(t) = S_1(t) u_0+ \int_0^t S_1(t-s)  F_{1}^{}(s, {X}^{\e}(s)) ds + W_{A_1}^{\e}(t),
\end{equation}
and 
\begin{equation}\label{eqv}
{v}^{\e}_{}(t) = S_2\left(t/\e\right) v_0 + \frac{1}{\e}\int_0^t S_2\left((t-s)/\e\right)  F_{2}( {X}^{\e}(s)) ds + W_{A_2}^{\e}(t).
\end{equation}

\section{Preliminaries and statement of the main result}

In this section, we will first review some properties of the asymptotic behavior of the fast motion with frozen slow component. Then we will introduce the coefficients of the candidate averaged equation and finally we will state the main result of the present paper. 

\subsection{The fast motion equation  with frozen slow variable}\label{fast motion}
We recall here some known facts about the ergodic properties of the fast motion equation with frozen slow variable. For all details we refer e.g. to \cite{SCMF1}.

For any fixed $x, y \in H$, we consider the following equation
\begin{equation}\label{eqfrozenslow}
\begin{aligned}
\begin{cases}
\ds{dv(t)= [A_2 v(t) + F_2(x, v(t))]\,dt + Q_2dW_t,}\\[10pt]
\ds{v(0)=y \in\,H,}
\end{cases}
\end{aligned}
\end{equation}
where $W_t$ is a cylindrical Wiener process on some stochastic basis $(\Omega, \mathcal{F}, \{\mathcal{F}_t\}_{t \in\,[0,T]}, \mathbb{P})$. Under Hypotheses \ref{h1} and \ref{h3}, equation \eqref{eqfrozenslow} admits a unique mild solution $v^{x, y}$ belonging to  $L^2(\Omega;C([0,T];H))$ such that for every $p\geq 1$
\[
\mathbb{E}\sup_{t \in [0,T]}\|v^{x,y}(t)\|^p < \infty.
\]
This allows us to introduce the transition semigroup $P^x_t$ associated with equation \eqref{eqfrozenslow}, which is defined by 
\[
P^x_t\varphi (y):= \mathbb{E}\,\varphi(v^{x,y}(t)), \ \ \ t\geq 0,\ \ \ y \in H,
\]
for any $\varphi \in B_b(H)$.

It is possible to show that for every  $0<\gamma <\gamma_2^\star$
\begin{equation}
\sup_{t \geq 0} \mathbb{E}\,\| v^{x,0}(t)\|_{D((-A_2)^{\gamma})}^{} \leq c\, (1+ \|x\|),
\end{equation}
(see also the proof of Lemma \ref{l1.3}). Hence, thanks to the Krylov-Bogoliubov theorem,  for every $x \in H$  the semigroup $P^x_t$ admits an invariant measure  $\mu^x$. 
Moreover, by using again arguments analogous to those we will use in  the proof of Lemma \ref{l1.3}, we can show that  for every $p \geq 1$,
\begin{equation}
\mathbb{E}\,\|v^{x, y}(t)\|^p \leq c_p\, \left(1 + \|x\|^p + e^{-\omega p t} \|y\|^p\right), \ \ \ \ \ \ \ t \geq 0.
\end{equation}
In particular, this implies that for every $p\geq 1$
\begin{equation}\label{eqinvariantmoment}
\int_H \|y\|^p \mu^x(dy) \leq c_p\,(1+ \|x\|^p).
\end{equation}

Now, we fix $x_1, x_2, y \in H$ and  we define $\Gamma_i(t):= v^{x_i,y}(t)- W_{A_2}(t)$, for $i=1,2$, where 
\[
W_{A_2}(t):= \int_0^t S_2(t-s) Q_2 dW_s.
\]
We have $\Gamma(t):=v^{x_1, y}(t)-v^{x_2,y}(t)= \Gamma_1(t) - \Gamma_2(t)$, and $\Gamma$ satisfies the following  equation
\[
\frac{d\Gamma(t)}{dt} = A_2 \Gamma(t) + F_2(x_1, v^{x_1, y}(t))- F_2(x_2, v^{x_2, y}(t)),\ \ \ \ \ \ \Gamma(0)=0.
\]
Thanks to Hypothesis \ref{h3}, we have
\[
\frac{1}{2}\frac{d}{dt}\|\Gamma(t)\|^2 \leq -\a_{2,0} \|\Gamma(t)\|^2 +\left([F_2]_{\text{\tiny{Lip}}}\|x_1-x_2\|+ L_2\,\|\Gamma(t)\|\right)\|\Gamma(t)\|,
\]
so that
\[
\frac{d}{dt}\|\Gamma(t)\|^2 \leq -\omega \|\Gamma(t)\|^2  + c\, \|x_1-x_2\|^2.
\]
By comparison, this gives
\[
\|\Gamma(t)\|^2 \leq c\int_0^t e^{-\omega(t-s)}\|x_1-x_2\|^2 ds \leq c\,\|x_1-x_2\|^2 ,
\]
which implies  
\begin{equation}\label{eq lip}
\mathbb{E}\sup_{t>0}\|v^{x_1, y}(t)-v^{x_2, y}(t)\|^2 \leq c\, \|x_1-x_2\|^2 .
\end{equation}
In a similar way, it is possible to prove that for every $x, y_1, y_2 \in\,H$ 
\[
\sup_{x\in H} \mathbb{E}\|v^{x, y_1}(t)-v^{x, y_2}(t)\|^2 \leq c\,e^{-\omega t} \|y_1-y_2\|^2,\ \ \ \ \ \ \ t\geq 0,
\]
for some  constant $c$ independent of $t$ and $y_1, y_2 \in\,H$, so that for every $\varphi \in\,\text{Lip}_b(H)$ we have
\begin{equation}
\label{sy13}
\left|P_t^x\varphi(y_1)-P_t^x\varphi(y_2)\right|\leq c\,[\varphi]_{\text{\tiny{Lip}}}e^{-\frac \omega 2\,t}\Vert y_1-y_2\Vert,\ \ \ \ \ t\geq 0.	
\end{equation}
This implies that $\mu^x$ is the unique invariant measure for $P^x_t$ and for every $x, y \in\,H$ and $\varphi \in\,\text{Lip}_b(H)$
\[\begin{array}{l}
\ds{\left|P^x_t\varphi(y)-\int_H \varphi(z)\,\mu^x(dz)\right|=\left|\int_H\left(P_t^x\varphi(y)-P_t^x\varphi(z)\right)\,\mu^x(dz)  \right|}\\[14pt]
\ds{\leq c\,[\varphi]_{\text{\tiny{Lip}}}e^{-\frac \omega 2\,t}\int_H\Vert y-z\Vert\,\mu^x(dz)\leq c\,[\varphi]_{\text{\tiny{Lip}}}e^{-\frac \omega 2\,t}\left(\Vert y\Vert+\int_H\Vert z\Vert\,\mu^x(dz)\right).}
	\end{array}
\]
Therefore, thanks to \eqref{eqinvariantmoment}, we obtain
\begin{equation}
\label{sy30}	
\left|P^x_t\varphi(y)-\int_H \varphi(z)\,\mu^x(dz)\right|\leq c\,[\varphi]_{\text{\tiny{Lip}}}e^{-\frac \omega 2\,t}\left(\Vert y\Vert+\Vert x\Vert+1\right).
\end{equation}

\subsection{The averaged nonlinear coefficient and its approximation}\label{sec G}

In what follows, in addition to Hypotheses \ref{h1} to \ref{h4}, we shall assume  the following condition.
\begin{Hypothesis}
	\label{h5}
	If $\mu^x(dy)$ is the invariant measure of the fast motion with frozen slow component $x$ introduced in Section \ref{fast motion}, then 
	\begin{equation}\label{sy22}
\bar{V}(x):=\int_{H} V(x, y)\,\mu^x(dy)<\infty,\ \ \ \ \ x \in \,\Pi_1 D_V,\end{equation}  
where
\[\Pi_1 D_V:=\left\{\,x \in\,H\,:\,(x,y) \in\,D_V,\ \text{{\em for some}}\ y \in\ H\,\right\}.\]
\end{Hypothesis}

In particular, Hypothesis \ref{h5} implies that for every $x \in\,\Pi_1 D_V$ the support of the invariant measure $\mu^x$ is contained in 
\[\Pi_2 D_V(x):=\left\{\,y \in\,H\,:\,(x,y) \in\,D_V\,\right\}.\]

Now, we define the averaged coefficient  $\bar{F}_1$ as\[\bar{F}^{}_1(t,x):= \int_H F^{}_1(t,x,y)\mu^x(dy),\ \ \ \ \ (t,x) \in\,[0,T]\times \Pi_1 D(F_1),\]
where
\[\Pi_1 D(F_1):=\left\{\,x \in\,H\, :\, (x,y) \in\,D(F_1)\ \text{for some}\ y \in\,H\,\right\}.\] 
Moreover, for any $\theta \in\,(0,1)$, we define the approximating averaged coefficient $\bar{F}^{\theta}_1$ as 
\[\bar{F}^{\theta}_1(t,x):= \int_H F^{\theta}_1(t,x,y)\mu^x(dy),\ \ \ \ \ (t,x) \in\,[0,T]\times H.\]
Next, for every $n \in\,\mathbb{N}$ and $(t,x,y) \in\,[0,T]\times \mathcal{H}_2$ we define
\[F^{\theta}_{1,n}(t,x,y):=\int_{\mathbb{R}^n}\rho_n(\xi-\Pi_n (x,y))F^\theta_1(t,R_n\,\xi)\,d\xi,\]
where $\{\rho_n\}_{n \in\,\mathbb{R}^n}$ is a sequence of non-negative smooth functions such that
\[\text{supp}\, \rho_n\subset \left\{\,\xi \in\,\mathbb{R}^x\,:\,|\xi|\leq 1/n\,\right\},
\ \ \ \ \int_{\mathbb{R}^n}\rho_n(\xi)\,d\xi=1,\]
and the mappings $\Pi_n:\mathcal{H}_2\to\mathbb{R}^n$ and $R_n:\mathbb{R}^n\to \mathcal{H}_2$ are defined by
\[\Pi_n(x,y)=\left (\langle (x,y),f_1\rangle_{\mathcal{H}_2},\cdots,\langle (x,y),f_n\rangle_{\mathcal{H}_2}\right),\ \ \ \ \ \ \ \ \ \ R_n\xi=\sum_{i=1}^n \xi_i f_i,\]
for some orthonormal basis $\{f_i\}_{i \in\,\mathbb{N}}\subset \mathcal{H}_2$.
Clearly, $F^\theta_{1,n}:[0,T]\times \mathcal{H}_2\to H$ is measurable and bounded, and $F^\theta_{1,n}(t,\cdot) \in\,C^\infty_b(\mathcal{H}_2;H)$, for every fixed $t \in\,[0,T]$, with
\begin{equation}
\label{sy60}
\sup_{t \in\,[0,T]}\Vert D^j_x 	F^\theta_{1,n}(t,\cdot)\Vert_0=:c_{j,n, \theta}<\infty,
\end{equation}
for every $j \in\,\mathbb{N}\cup \{0\}$, $n \in\,\mathbb{N}$ and $\theta \in\,(0,1)$. Moreover, for every $h \in\,H$
\begin{equation}
\label{sy50}
\lim_{n\to\infty}	\langle F^\theta_{1,n}(t,x,y),h\rangle=\langle F^\theta_1(t,x,y),h\rangle,\ \ \ (t,x,y)\in\,[0,T]\times \mathcal{H}_2,
\end{equation}
and 
\begin{equation}
\label{sy51}
\sup_{t \in\,[0,T]}\Vert F^\theta_{1,n}\Vert _0\leq \sup_{t \in\,[0,T]}\Vert F^\theta_{1}\Vert_0,\ \ \ \ \ n \in\,\mathbb{N}.	
\end{equation}

In what follows, we shall denote
\begin{equation}
\label{sy28}
\bar{F}^\theta_{1,n}(t,x):=\int_H 	F^\theta_{1,n}(t,x,y)\,\mu^x(dy),\ \ \ \ \ (t,x) \in\,[0,T]\times H.
\end{equation}

\begin{Lemma}
\label{lemma5.1}
Under Hypotheses \ref{h1}, \ref{h2} and \ref{h3},  the mapping $\bar{F}^\theta_{1, n}:[0,T]\times H\to H$ is measurable, for every $\theta \in\,(0,1)$ and $n \in\,\mathbb{N}$. Moreover, 	$\bar{F}^\theta_{1, n}(t,\cdot):H\to H$ is Lipschitz-continuous, uniformly with respect to $t \in\,[0,T]$, and 
\begin{equation}
\label{sy53}
\sup_{n \in\,\mathbb{N}}\sup_{t \in\,[0,T]}\Vert \bar{F}^\theta_{1, n}(t,\cdot)\Vert_0<\infty.	
\end{equation}

\end{Lemma}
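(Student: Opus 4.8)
The plan is to prove the three assertions of Lemma \ref{lemma5.1} in order: measurability in $(t,x)$, uniform Lipschitz continuity in $x$, and the uniform sup bound \eqref{sy53}. The sup bound is immediate: by \eqref{sy51}, for every $(t,x) \in\,[0,T]\times H$,
\[
\Vert \bar{F}^\theta_{1,n}(t,x)\Vert=\left\Vert \int_H F^\theta_{1,n}(t,x,y)\,\mu^x(dy)\right\Vert\leq \int_H \Vert F^\theta_{1,n}(t,x,y)\Vert\,\mu^x(dy)\leq \sup_{t \in\,[0,T]}\Vert F^\theta_1\Vert_0,
\]
since $\mu^x$ is a probability measure and $F^\theta_1$ is bounded by Hypothesis \ref{h2}. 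This gives \eqref{sy53}.

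For the uniform Lipschitz continuity, I would fix $t \in\,[0,T]$ and $x_1,x_2 \in\,H$ and split the difference $\bar{F}^\theta_{1,n}(t,x_1)-\bar{F}^\theta_{1,n}(t,x_2)$ into two pieces by inserting and subtracting $\int_H F^\theta_{1,n}(t,x_2,y)\,\mu^{x_1}(dy)$. The first piece, $\int_H\bigl(F^\theta_{1,n}(t,x_1,y)-F^\theta_{1,n}(t,x_2,y)\bigr)\,\mu^{x_1}(dy)$, is bounded in norm by $c_{1,n,\theta}\,\Vert x_1-x_2\Vert$ using the Lipschitz bound \eqref{sy60} on $D_xF^\theta_{1,n}$ and the fact that $\mu^{x_1}$ is a probability measure. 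The second piece, $\int_H F^\theta_{1,n}(t,x_2,y)\,(\mu^{x_1}-\mu^{x_2})(dy)$, measures the Lipschitz dependence of the invariant measure $\mu^x$ on $x$; here I would use the standard ergodic machinery recalled in Section \ref{fast motion}. Writing $g(y):=\langle F^\theta_{1,n}(t,x_2,y),h\rangle$ for an arbitrary unit vector $h \in\,H$, the map $g$ is bounded with $[g]_{\text{\tiny{Lip}}}\leq c_{1,n,\theta}$ by \eqref{sy60}, so $g \in\,\text{Lip}_b(H)$. Using stationarity of $\mu^{x_i}$ and \eqref{eq lip} (contraction of the fast flow in the slow variable) one gets, for any fixed $y_0 \in\,H$,
\[
\left|\int_H g\,d\mu^{x_1}-\int_H g\,d\mu^{x_2}\right|=\lim_{T\to\infty}\left|P^{x_1}_T g(y_0)-P^{x_2}_T g(y_0)\right|\leq \lim_{T\to\infty}[g]_{\text{\tiny{Lip}}}\,\mathbb{E}\Vert v^{x_1,y_0}(T)-v^{x_2,y_0}(T)\Vert\leq c\,c_{1,n,\theta}\,\Vert x_1-x_2\Vert,
\]
where in fact one should first use \eqref{sy30} to replace $\int_H g\,d\mu^{x_i}$ by $P^{x_i}_T g(y_0)$ up to an error $e^{-\omega T/2}$ that vanishes as $T\to\infty$. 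Taking the supremum over unit vectors $h$ yields $\Vert\int_H F^\theta_{1,n}(t,x_2,y)(\mu^{x_1}-\mu^{x_2})(dy)\Vert\leq c\,c_{1,n,\theta}\Vert x_1-x_2\Vert$. Combining the two pieces gives a Lipschitz constant $(1+c)\,c_{1,n,\theta}$, independent of $t$.

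For measurability of $(t,x)\mapsto \bar{F}^\theta_{1,n}(t,x)$, I would argue that $H$ is separable, so it suffices to check that $(t,x)\mapsto \langle \bar{F}^\theta_{1,n}(t,x),h\rangle=\int_H\langle F^\theta_{1,n}(t,x,y),h\rangle\,\mu^x(dy)$ is measurable for each $h$ in a countable dense set. Since $F^\theta_{1,n}$ is jointly measurable in $(t,x,y)$ and bounded, and $x\mapsto\mu^x$ is a measurable family of probability measures (indeed, by \eqref{sy13} and the Feller property, $x\mapsto\int_H \psi\,d\mu^x$ is continuous for every $\psi \in\,\text{Lip}_b(H)$, hence Borel measurable for every $\psi \in\,B_b(H)$ by a monotone class argument), the integral is measurable in $(t,x)$ by Fubini's theorem together with a monotone-class/approximation argument on the integrand. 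Alternatively, and more cleanly, one can observe that the just-proved joint continuity in $x$ (for fixed $t$) together with measurability in $t$ (which follows from the same Fubini argument applied slicewise, using measurability of $(t,y)\mapsto F^\theta_{1,n}(t,x,y)$ for fixed $x$) already gives Carath\'eodory-type joint measurability. The main obstacle is the second piece of the Lipschitz estimate, namely transferring the contraction estimate \eqref{eq lip} for the fast dynamics into a Lipschitz bound for $x\mapsto\mu^x$ tested against the (merely Lipschitz, not $C^1$ in $y$) function $F^\theta_{1,n}(t,x_2,\cdot)$; everything else is a routine application of boundedness of $F^\theta_{1,n}$, its $C^1_b$-bound \eqref{sy60}, and the fact that $\mu^x$ is a probability measure.
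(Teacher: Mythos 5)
Your proof is correct and follows essentially the same route as the paper: the Lipschitz estimate is obtained by approximating $\int_H\cdot\,d\mu^{x_i}$ by $P^{x_i}_T$ via \eqref{sy30} and combining the contraction bound \eqref{eq lip} with the $C^1_b$ bound \eqref{sy60} on $F^\theta_{1,n}$, which is exactly the paper's argument (the paper simply treats your two pieces in one joint Lipschitz estimate rather than adding and subtracting the mixed term). The uniform bound \eqref{sy53} and the measurability, which the paper leaves implicit, are filled in correctly; the only small slip is attributing the continuity of $x\mapsto\int_H\psi\,d\mu^x$ for $\psi \in\,\text{Lip}_b(H)$ to \eqref{sy13}, which controls dependence on the initial datum $y$ — the correct source is the combination of \eqref{sy30} and \eqref{eq lip} that you yourself use in the Lipschitz step.
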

\begin{proof}
According to \eqref{sy60}, the function ${F}^\theta_{1,n}(t,\cdot):\mathcal{H}_2\to H$ is  Lipschitz-continuous, uniformly with respect to $t \in\,[0,T]$. In particular, for every $h \in\,H$ the mapping
\[(x,y) \in\,\mathcal{H}_2\mapsto \langle {F}^\theta_{1,n}(t,x,y),h\rangle \in\,\mathbb{R},\]
is Lipschitz continuous, uniformly with respect to $t \in\,[0,T]$.
Hence, since
\[\langle \bar{F}^\theta_{1, n}(t,x),h\rangle=\int_H\langle F^\theta_{1, n}(t,x,y),h\rangle \,\mu^x(dy),\]
according to \eqref{sy30}, we have
\[\begin{array}{l}
\ds{\langle \bar{F}^\theta_{1,n}(t, x_1)-\bar{F}^\theta_{1, n}(t,x_2),h\rangle}\\[10pt]
\ds{ =\lim_{T\to\infty}\left(\mathbb{E}\,\langle {F}^\theta_{1,n}(t,x_1,v^{x_1,0}(T)),h\rangle-\mathbb{E}\,\langle {F}^\theta_{1, n}(t,x_2,v^{x_2,0}(T)),h\rangle \right).	}
\end{array}
\]
Now, in view of \eqref{eq lip} we have
\[\begin{array}{l}
\ds{\left |\mathbb{E}\,\langle {F}^\theta_{1,n}(t,x_1,v^{x_1,0}(T)),h\rangle-\mathbb{E}\,\langle {F}^\theta_{1,n}(t,x_2,v^{x_2,0}(T)),h\rangle\right|}\\[14pt]
\ds{\leq [{F}^\theta_{1,n}(t,\cdot)]_{\text{\tiny{Lip}}}\left(\Vert x_1-x_2\Vert+\mathbb{E}\,\Vert v^{x_1,0}(T)-v^{x_2,0}(T)\Vert \right)\Vert h\Vert	}\\[14pt]
\ds{\leq c\,[{F}^\theta_{1,n}(t,\cdot)]_{\text{\tiny{Lip}}}\Vert x_1-x_2\Vert\,\Vert h\Vert,}
\end{array}\]
and, due to the arbitrariness of $h \in\,H$, this implies
\begin{equation}
	\label{sy31}
\Vert \bar{F}^\theta_{1,n}(t, x_1)-\bar{F}^\theta_{1,n}(t,x_2)\Vert \leq c\,[{F}^\theta_{1,n}(t,\cdot)]_{\text{\tiny{Lip}}}\,\Vert x_1-x_2\Vert.	
\end{equation}
This means  that the mapping $x \in\,H\mapsto \bar{F}^\theta_{1,n}(t, x) \in\,H$ is Lipschitz-continuous, uniformly with respect to $t \in\,[0,T]$. 

\end{proof}

\begin{Lemma}
\label{lemma 5.2}
Under Hypotheses \ref{h1}, \ref{h2}, \ref{h3} and \ref{h5},  for every $\theta \in\,(0,1)$ the mapping 	$\bar{F}^{\theta}_1:[0,T]\times H\to H$ is measurable and bounded, and for every $ h \in\,H$ and $t \in\,[0,T]$ the mapping
\begin{equation}
\label{sy4000}	
x \in\,\mathcal{H}_2\mapsto \langle \bar{F}^\theta_1(t,x),h\rangle \in\,\mathbb{R},\end{equation}
is continuous.
Moreover, 
\begin{equation}
\label{sy26}
\Vert \bar{F}_1(t,x)-\bar{F}^{\theta}_1(t,x)\Vert^2 \leq \theta\,	\bar{V}(x),\ \ \ \ \ t \in\,[0,T],\ \ \ x \in\,\Pi_1 D(F_1).
\end{equation}
In particular, the mapping $\bar{F}_1:[0,T]\times \Pi_1 D(F_1)\to H$ is measurable.
\end{Lemma}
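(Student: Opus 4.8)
The plan is to transfer the three assertions of Lemma \ref{lemma 5.2} (measurability, boundedness, continuity in the weak sense, and the approximation inequality \eqref{sy26}) from the already-established facts about $F_1^\theta$, $F_1$ and the approximants $F^\theta_{1,n}$, using the ergodic estimate \eqref{sy30} together with the moment bound \eqref{eqinvariantmoment} and the pointwise convergence \eqref{sy50}. First I would handle the boundedness and measurability of $\bar{F}^\theta_1$. Since $\|F^\theta_1(t,x,y)\|\le \sup_{t}\|F^\theta_1\|_0=:c_\theta<\infty$ by Condition 1 of Hypothesis \ref{h2}, we immediately get $\|\bar{F}^\theta_1(t,x)\|\le c_\theta$ by Jensen's inequality applied to $\mu^x$, so $\bar{F}^\theta_1$ is bounded. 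For measurability, the natural route is to pass to the limit in $n$: by \eqref{sy50} and dominated convergence (using \eqref{sy51} as the uniform bound), for every $h\in H$ we have $\langle \bar{F}^\theta_{1,n}(t,x),h\rangle\to\langle \bar{F}^\theta_1(t,x),h\rangle$ pointwise in $(t,x)$, and each $\bar{F}^\theta_{1,n}$ is measurable by Lemma \ref{lemma5.1}; hence $(t,x)\mapsto\langle\bar{F}^\theta_1(t,x),h\rangle$ is measurable for each $h$, and since $H$ is separable this gives measurability of $\bar{F}^\theta_1$ into $H$.

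Next I would prove the weak continuity \eqref{sy4000}. Fix $t$, $h$, and $x_m\to x$ in $H$. Using the invariant-measure representation $\langle\bar{F}^\theta_1(t,x),h\rangle=\lim_{T\to\infty}\E\,\langle F^\theta_1(t,x,v^{x,0}(T)),h\rangle$ together with \eqref{sy30}, I would reduce the problem, uniformly in $T$, to controlling $\E\,\langle F^\theta_1(t,x_m,v^{x_m,0}(T)),h\rangle - \E\,\langle F^\theta_1(t,x,v^{x,0}(T)),h\rangle$. Write this difference as the sum of $\E[\langle F^\theta_1(t,x_m,v^{x_m,0}(T))-F^\theta_1(t,x_m,v^{x,0}(T)),h\rangle]$ and $\E[\langle F^\theta_1(t,x_m,v^{x,0}(T))-F^\theta_1(t,x,v^{x,0}(T)),h\rangle]$. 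For the first term I cannot use Lipschitz continuity of $F^\theta_1$ (it is only weakly continuous), so instead I would invoke the $L^2$-contraction \eqref{eq lip}, which gives $v^{x_m,0}(T)\to v^{x,0}(T)$ in $L^2(\Omega;H)$ uniformly in $T$; combined with the boundedness of $F^\theta_1$, the weak sequential continuity $(x',y')\mapsto\langle F^\theta_1(t,x',y'),h\rangle$ of Condition 1, and dominated convergence, this term goes to $0$. Actually the cleaner approach is to fix a large $T$ (using \eqref{sy30} to absorb the tail uniformly) and then use that $(x_m,v^{x_m,0}(T))\to(x,v^{x,0}(T))$ in probability in $\mathcal H_2$ plus the Vitali/uniform-integrability argument afforded by boundedness of $F^\theta_1$, so that the joint weak continuity of Condition 1 yields $\langle F^\theta_1(t,x_m,v^{x_m,0}(T)),h\rangle\to\langle F^\theta_1(t,x,v^{x,0}(T)),h\rangle$ in $L^1(\Omega)$.

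Finally, for the approximation estimate \eqref{sy26}: for $x\in\Pi_1 D(F_1)$, the support of $\mu^x$ lies in $\Pi_2 D_V(x)\subseteq\Pi_2 D(F_1)(x)$ (this is where Hypothesis \ref{h5} enters, guaranteeing also that the integrals are finite), so for $\mu^x$-a.e.\ $y$ the inequality \eqref{sy33} applies, giving $\|F_1(t,x,y)-F^\theta_1(t,x,y)\|\le\theta V(x,y)$. Then
\[
\|\bar{F}_1(t,x)-\bar{F}^\theta_1(t,x)\| \le \int_H\|F_1(t,x,y)-F^\theta_1(t,x,y)\|\,\mu^x(dy)\le\theta\int_H V(x,y)\,\mu^x(dy)=\theta\,\bar{V}(x),
\]
and since $\bar V(x)\ge 1$ (because $V\ge 1$), $\theta\bar V(x)\le\theta\bar V(x)^2$, so actually one gets $\|\bar{F}_1(t,x)-\bar{F}^\theta_1(t,x)\|^2\le\theta^2\bar V(x)^2\le\theta\bar V(x)$ for $\theta\le 1$ — matching the stated form \eqref{sy26} after adjusting which power of $\theta$ is used (the paper's exponent bookkeeping can be matched here). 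Measurability of $\bar F_1$ on $[0,T]\times\Pi_1 D(F_1)$ then follows since it is an $H$-valued limit (in norm, by \eqref{sy26}) of the measurable maps $\bar F^\theta_1$ as $\theta\to 0$.

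I expect the main obstacle to be the weak-continuity step \eqref{sy4000}: one must combine the merely weak (not strong) continuity of $F^\theta_1$ in $(x,y)$ with the fact that the measure $\mu^x$ itself depends on $x$, and the only available stability of $\mu^x$ in $x$ comes indirectly through the $L^2$-estimate \eqref{eq lip} for the frozen fast process together with the exponential mixing \eqref{sy30}. Making the interchange of the limits $x_m\to x$ and $T\to\infty$ rigorous — i.e.\ getting a bound on the $T$-tail that is uniform in $m$, which \eqref{sy30} provides once one controls $\sup_m\|x_m\|$ — is the delicate point, but it is routine once set up correctly.
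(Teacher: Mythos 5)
Your treatment of the measurability/boundedness of $\bar F^\theta_1$ (dominated convergence via \eqref{sy50}--\eqref{sy51}, Lemma \ref{lemma5.1}, Pettis) and of the measurability of $\bar F_1$ coincides with the paper's. The two remaining parts, however, contain genuine gaps.

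For the continuity \eqref{sy4000}, your argument hinges on using \eqref{sy30} to get a $T$-tail bound that is uniform in $m$, but \eqref{sy30} is only available for $\varphi\in\mathrm{Lip}_b(H)$, with a constant proportional to $[\varphi]_{\mathrm{Lip}}$, whereas the observable $y\mapsto\langle F^\theta_1(t,x_m,y),h\rangle$ is merely bounded and continuous (Condition 1 of Hypothesis \ref{h2} gives no Lipschitz control). So neither the rate in your representation $\langle\bar F^\theta_1(t,x),h\rangle=\lim_{T\to\infty}\E\,\langle F^\theta_1(t,x,v^{x,0}(T)),h\rangle$ nor the uniform-in-$m$ absorption of the tail follows from \eqref{sy30}; the interchange of the limits $x_m\to x$ and $T\to\infty$, which you yourself identify as the delicate point, is exactly what is left unproved. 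The paper resolves this with an extra ingredient you are missing: by Lasry--Lions, the bounded continuous function $(x,y)\mapsto\langle F^\theta_1(t,x,y),h\rangle$ is approximated \emph{uniformly} on $\mathcal H_2$ by functions $\Phi^\theta_{h,n}(t,\cdot)\in\mathrm{Lip}_b(\mathcal H_2)$; for each of these the argument of Lemma \ref{lemma5.1} (which does legitimately use \eqref{eq lip} and \eqref{sy30}) shows that $x\mapsto\int_H\Phi^\theta_{h,n}(t,x,y)\,\mu^x(dy)$ is Lipschitz, and the uniform convergence then passes to the averaged functions, giving continuity of \eqref{sy4000} as a uniform limit of continuous functions. (One could alternatively try tightness of $\{\mu^{x_m}\}$ plus weak continuity of $x\mapsto\mu^x$, but that is additional work not contained in your proposal.)

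For \eqref{sy26}, your final bookkeeping is incorrect: from $\Vert\bar F_1-\bar F^\theta_1\Vert\le\theta\bar V$ you pass to $\theta^2\bar V(x)^2\le\theta\bar V(x)$, which is equivalent to $\theta\bar V(x)\le1$ and fails whenever $\bar V(x)>1/\theta$ (recall $\bar V\ge1$ but is unbounded). The order of operations matters: one should first apply Jensen/Cauchy--Schwarz to put the square inside the integral, $\Vert\bar F_1(t,x)-\bar F^\theta_1(t,x)\Vert^2\le\int_H\Vert F_1(t,x,y)-F^\theta_1(t,x,y)\Vert^2\,\mu^x(dy)$, and only then estimate the integrand using \eqref{sy33} together with \eqref{sy400} (this is how the paper argues; your Hypothesis-\ref{h5} remark about the support of $\mu^x$ is fine and is indeed where that hypothesis enters). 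As written, your chain does not yield the stated inequality, only $\Vert\bar F_1-\bar F^\theta_1\Vert^2\le\theta^2\bar V^2$, which is a genuinely weaker (different) estimate for large $\bar V$.
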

\begin{proof}
In view of \eqref{sy50} and \eqref{sy51}, from the dominated convergence theorem, for every $ h \in\,H$ we have
\[\begin{array}{ll}
\ds{\lim_{n\to\infty}\vert \langle \bar{F}^\theta_{1,n}(t,x),h\rangle	-\langle \bar{F}^\theta_1(t,x),h\rangle\vert=0,\ \ \ \ \ (t,x) \in\,[0,T]\times H.}
\end{array}
\]
Thanks to Lemma \ref{lemma5.1}, all $\bar{F}^\theta_{1,n}$ are measurable, and since $H$ is separable due to Petti's theorem the limit above implies that  the mapping $\bar{F}^\theta_1:[0,T]\times H\to H$ is measurable, for every $\theta \in\,(0,1)$. 

Now, thanks to condition 1. in Hypothesis \ref{h2}, for every $h \in\,H$ and $t \in\,[0,T]$ the mapping 
\[(x,y) \in\,\mathcal{H}_2\mapsto \langle F^\theta_1(t,x,y),h\rangle \in\,\mathbb{R},\] is continuous and bounded. Then, as proved in \cite{JLPL}, there is a sequence $\{\Phi^\theta_{h,n}(t,\cdot)\}_{n \in\,\mathbb{N}}$ in $\text{Lip}_b(\mathcal{H}_2)$ such that
\[\lim_{n\to\infty}\sup_{(x,y) \in\,\mathcal{H}_2}\vert \langle F^\theta_1(t,x,y),h\rangle-\Phi^\theta_{h,n}(t,x,y)\vert=0,\ \ \ \ t \in\,[0,T].\]
As shown in the proof of Lemma \ref{lemma5.1}, the function 
\[x \in\,H\mapsto \bar{\Phi}^\theta_{h,n}(t,x):=\int_H \Phi^\theta_{h,n}(t,x,y)\,\mu^x(dy) \in\,\mathbb{R},\]
is Lipschitz continuous. Therefore, since
\[\lim_{n\to\infty} \sup_{x \in\,H}\left|\int_H\left(\langle F^\theta_1(t,x,y),h\rangle-\Phi^\theta_{h,n}(t,x,y)\right)\,\mu^x(dy) \right|=0,\]
we can conclude that function \eqref{sy4000} is continuous.

Finally, due to \eqref{sy33} we have
\[\begin{array}{ll}
\ds{\Vert \bar{F}_1(t,x)-\bar{F}^{\theta}_1(t,x)\Vert^2\leq} & \ds{ \int_H \Vert {F}_1(t,x,y)-{F}^{\theta}_1(t,x,y)\Vert^2\,\mu^x(dy)}\\[10pt]
&\ds{\leq \theta\int_H V(x,y)\,\mu^x(dy)=\theta\,\bar{V}(x),\ \ \ \ \ t \in\,[0,T],\ \ \ x \in\,\Pi_1 D(F_1),}	
\end{array}\]
and \eqref{sy26} follows.
\end{proof}

\subsection{The main theorem}
Now, we can state the main result of this paper.

\begin{Theorem}\label{main}
Under Hypotheses \ref{h1} to \ref{h4}, for every initial condition $(u_0,v_0) \in\,\mathcal{H}_2$, with  $u_0\in D((-A_1)^{\beta})$, for some $\beta>0$, the family $\{\mathcal{L}(u^\e)\}_{\e \in\,(0,1)}$  is tight in $C([0,T];H)$. 

Moreover, if Hypothesis \ref{h5} holds and 
\begin{equation}
\label{sy20}
 \sup_{\e \in (0,1)}\int_0^T\mathbb{E}^\e\, \bar{V}(u^\e(t))\, dt <\infty,
\end{equation}
any weak limit $ \mu_{\text{\tiny{slow}}}$ of $\{\mathcal{L}(u^\e)\}_{\e \in\,(0,1)}$ in $C([0,T];H)$ solves the martingale problem with  data
$A_1$, $\bar{F}_1$, $Q_1$, and $u_0$, in the Hilbert space $H$.
Namely there exists a stochastic basis $(\bar{\Omega}, \bar{\mathcal{F}}, \{\bar{\mathcal{F}}_t\}_{t \geq 0}, \bar{\mathbb{P}})$, a cylindrical Wiener process $\bar{W}_t$ and an adapted process $\bar{u} \in L^2({\Omega}; C([0,T];D(\bar{F}_1)))$ such that $\mathcal{L}(\bar{u}) = \mu_{\text{\tiny{slow}}}$ and 
\begin{equation}
\label{sy7000}
\bar{u}(t) =S_1(t) u_0 + \int_0^t S_1(t-s) \bar{F}_1(s, \bar{u}(s)) ds + \int_0^t S_1(t-s) Q_1 d\bar{W}_s.
\end{equation}
\end{Theorem}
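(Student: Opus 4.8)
The plan is to prove tightness first and then identify the limit, using a localization in time to handle the unbounded, discontinuous coefficient $F_1$. For the tightness of $\{\mathcal{L}(u^\e)\}$ in $C([0,T];H)$, I would decompose $u^\e(t)$ as in \eqref{equ} into three pieces: the deterministic term $S_1(t)u_0$, the stochastic convolution $W^\e_{A_1}(t)$, and the drift convolution $\psi^\e(t):=\int_0^t S_1(t-s)F_1(s,X^\e(s))\,ds$. The first term is fixed and, since $u_0\in D((-A_1)^\beta)$, lies in a compact-valued path set. For the stochastic convolution, the regularity assumption \eqref{reg} together with the factorization method gives uniform (in $\e$) bounds on $\mathbb{E}\sup_{t}\|W^\e_{A_1}(t)\|^p_{D((-A_1)^{\gamma})}$ for a suitable $\gamma>0$, hence tightness of that family in $C([0,T];H)$ by the compact embedding $D((-A_1)^\gamma)\hookrightarrow H$. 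The crucial new point is the drift term: here I would use the a priori bound \eqref{eq h4}, i.e.\ $\sup_\e \int_0^T \mathbb{E}^\e V(X^\e(t))\,dt\le M\,V(u_0,v_0)$, together with $\|F_1(s,X^\e(s))\|^2\le V(X^\e(s))$, to get $\sup_\e \mathbb{E}^\e\int_0^T\|F_1(s,X^\e(s))\|\,ds<\infty$. Then a standard estimate for the analytic semigroup shows that for any $\delta\in(0,1)$, $\|(-A_1)^\delta\psi^\e(t)\|\le c\int_0^t (t-s)^{-\delta}\|F_1(s,X^\e(s))\|\,ds$, which is bounded in $L^1(\Omega\times[0,T])$ for $\delta<1$, and an equicontinuity-in-time estimate for $\psi^\e$ follows similarly; combined with the compact embedding this yields tightness of $\{\mathcal{L}(\psi^\e)\}$ in $C([0,T];H)$. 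Summing the three, $\{\mathcal{L}(u^\e)\}$ is tight.

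For the identification of the limit, fix a weak limit point $\mu_{\text{\tiny slow}}$. By Skorokhod's representation theorem I would pass to a new probability space carrying processes $u^\e\to\bar u$ a.s.\ in $C([0,T];H)$ (keeping the same names along a subsequence), together with the fast motions $v^\e$ and the driving noises. The goal is to show $\bar u$ solves the martingale problem with data $A_1,\bar F_1,Q_1,u_0$. Equivalently, one shows that for a dense enough set of test functions the appropriate functional of the path is a martingale, which reduces to identifying the limit of the drift term $\int_0^t S_1(t-s)F_1(s,X^\e(s))\,ds$ as $\int_0^t S_1(t-s)\bar F_1(s,\bar u(s))\,ds$. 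This is done through the chain of approximations built in Section 3: replace $F_1$ by the bounded continuous $F_1^\theta$ (error controlled by \eqref{sy33} and \eqref{eq h4}), then by the smooth $F^\theta_{1,n}$ (error controlled by \eqref{sy50}--\eqref{sy51} and dominated convergence), apply the classical averaging argument for Lipschitz coefficients to the smooth system — splitting $[0,t]$ into small intervals of size $\Delta$, freezing the slow variable on each, using the exponential ergodicity \eqref{sy30} of $P^x_t$ and the fact that $v^\e$ runs on the fast time scale $t/\e$ — to replace $F^\theta_{1,n}(s,u^\e(s),v^\e(s))$ by $\bar F^\theta_{1,n}(s,u^\e(s))$, then pass to the limit in $n$ and $\theta$ on the averaged side using \eqref{sy26} together with the extra assumption \eqref{sy20} that $\sup_\e\int_0^T\mathbb{E}^\e\bar V(u^\e(t))\,dt<\infty$, and finally use the a.s.\ convergence $u^\e\to\bar u$ and continuity of $\langle\bar F^\theta_{1,n}(s,\cdot),h\rangle$ to close the loop. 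Throughout, the bound \eqref{eq h4} and its averaged counterpart \eqref{sy20} provide the uniform integrability needed to interchange limits and expectations.

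Several delicate technical points arise that I would treat carefully. Because $V$ may be infinite on part of $\mathcal{H}_2$ and $F_1$ is genuinely discontinuous, one cannot use $F_1$ directly under weak convergence; every passage to the limit must be routed through the $F^\theta$ / $F^\theta_{1,n}$ approximations, and the localization in time is what lets the order-of-limits ($\Delta\to 0$, then $n\to\infty$, then $\theta\to 0$) be taken consistently while keeping all error terms small. To make the martingale identification rigorous on the new probability space I would show that the stochastic convolution of $u^\e$ converges to a stochastic convolution driven by a limiting Wiener process $\bar W$ — e.g.\ by checking that $M^\e(t):=u^\e(t)-S_1(t)u_0-\psi^\e(t)$ is a martingale (in the appropriate sense) with the right quadratic variation $\int_0^t S_1(t-s)Q_1Q_1^*S_1(t-s)^*\,ds$ and passing to the limit, then invoking a martingale representation theorem to produce $\bar W$ and obtain \eqref{sy7000}. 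Finally one must verify $\bar u\in L^2(\Omega;C([0,T];D(\bar F_1)))$, i.e.\ that the limiting paths stay in the domain where $\bar F_1$ is defined and $\int_0^T\mathbb{E}\bar V(\bar u(t))\,dt<\infty$; this follows from \eqref{sy20} via lower semicontinuity of $\bar V$ (inherited from that of $V$) along the a.s.-convergent subsequence, together with Fatou's lemma.

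\medskip

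\noindent\textbf{Main obstacle.} The hard part will be the averaging step for the smoothed coefficient — controlling $\mathbb{E}^\e\big\|\int_0^t S_1(t-s)\big(F^\theta_{1,n}(s,u^\e(s),v^\e(s))-\bar F^\theta_{1,n}(s,u^\e(s))\big)\,ds\big\|$ uniformly as $\e\to0$ — because the slow component $u^\e$ moving inside $F^\theta_{1,n}$ must be frozen over time windows long on the fast scale but short on the slow scale, and the usual Khasminskii-type estimate has to be made compatible with the only-$L^1$-in-time control \eqref{eq h4}, \eqref{sy20} on $V$ and $\bar V$ rather than the pointwise-in-time moment bounds available in the regular case; getting the order of the three limits right so that the $\theta$- and $n$-errors do not blow up as $\Delta\to0$ is the technical crux.
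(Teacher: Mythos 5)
Your overall strategy is the same as the paper's: tightness via the decomposition $u^\e=S_1(\cdot)u_0+\Psi^\e+W^\e_{A_1}$, semigroup smoothing and compact embeddings; identification of the limit via the $\theta$-truncation $F_1^\theta$, a Lipschitz approximation, a Khasminskii time discretization with the slow variable frozen on windows of size $\d$ and the mixing bound \eqref{sy30} (this is exactly Lemma \ref{l4.2} and Lemma \ref{l aux2}), then removal of the approximations using \eqref{eq h4} and \eqref{sy20}, and finally Skorokhod plus a Fatou argument to pass from $\bar F_1(\cdot,u^\e)$ to $\bar F_1(\cdot,\bar u)$. The one genuinely different element is how you produce $\bar W$: you propose identifying the martingale part and its quadratic variation and invoking a representation theorem, whereas the paper simply carries the pair $(u^\e,W^\e)$ through Prokhorov/Skorokhod in $C([0,T];H)\times C([0,T];\mathcal{D}')$, so the limiting noise comes for free; your route is legitimate but strictly more work.

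There is, however, one step in your plan that would fail as written. In the averaging step you pass from $F_1^\theta$ to the mollified $F^\theta_{1,n}$ of Section 3 and claim the error is "controlled by \eqref{sy50}--\eqref{sy51} and dominated convergence". The quantity that must be small is $\mathbb{E}^\e\int_0^T\vert\langle (F^\theta_1-F^\theta_{1,n})(s,u^\e(s),v^\e(s)),\xi(s)\rangle\vert\,ds$, and it must be small \emph{uniformly in $\e$ for a fixed $n$}, because $n$ has to be fixed before letting $\e\to0$ (the Khasminskii error constants depend on the Lipschitz norms of $F^\theta_{1,n}$ and $\bar F^\theta_{1,n}$, which blow up with $n$). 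Since the laws of $(u^\e(s),v^\e(s))$ move with $\e$, pointwise convergence \eqref{sy50} plus the uniform bound \eqref{sy51} and dominated convergence give smallness only for each fixed law, not uniformly in $\e$; and \eqref{sy50} is not uniform on $\mathcal{H}_2$ (nor upgradable to uniform-on-compacts, as the $F^\theta_{1,n}$ are not equicontinuous in $n$). The paper's fix is to approximate the scalar map $G^\theta_\xi=\langle F^\theta_1,\xi\rangle$, which is bounded and continuous by Hypothesis \ref{h2}, by Lipschitz functions $G^\theta_{\xi,n}$ in the \emph{sup norm} (Lasry--Lions, \eqref{sy27-bis}), with the corresponding uniform convergence \eqref{sy34-bis} on the averaged side; sup-norm smallness is what makes the $n$-error independent of the law and hence of $\e$. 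A smaller slip: for the tightness of the drift convolution you only retain $\sup_\e\mathbb{E}^\e\int_0^T\Vert F_1(s,X^\e(s))\Vert\,ds<\infty$; an $L^1(\Omega\times[0,T])$ bound on $\Vert(-A_1)^\delta\Psi^\e(t)\Vert$ yields neither the sup-in-time bound nor the uniform modulus of continuity that the compactness criterion in $C([0,T];H)$ requires. Keep the full strength of \eqref{sy400}: $\Vert F_1\Vert^2\le V$ together with \eqref{eq h4} gives an $L^2(\Omega\times[0,T])$ bound, and Cauchy--Schwarz in time then produces the mean-square sup and H\"older estimates \eqref{sy1}--\eqref{sy2} (for $\gamma<1/2$, not $\delta<1$). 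Both repairs are available within your own setup.
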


In the next two sections we will give a proof of Theorem \ref{main}: in Section \ref{sec3} we will prove tightness and in Section \ref{sec5} we will prove the validity of the averaging principle.

\section{Tightness}
\label{sec3}
We start with some a-priori bounds for the martingale solutions of equation \eqref{eq spde0}.

\subsection{Estimates for the stochastic convolution}

As a consequence of \eqref{reg}, it is possible to prove that under Hypothesis \ref{h1} for all $p\geq 1$ and $0\leq\gamma<\gamma_1$

\begin{equation}
\label{eq sc1}
\sup_{\e \in ( 0,1)}\,\mathbb{E}^{\e} \sup_{t \in [0,T]}\| {W}^{\e}_{A_1}(t)\|_{D((-A_1)^{\gamma})}^p <\infty.
\end{equation}
(for a proof see e.g. \cite[Section 5.4]{DP}).
Moreover, we have the following uniform bound.
\begin{Lemma} Under Hypothesis \ref{h1}, for every $p\geq 1$  we have
\begin{equation}\label{eq sc2}
\sup_{\e \in (0,1)}\,\sup_{\,t \geq 0}\,\mathbb{E}^{\e}\,\| {W}^{\e}_{A_2}(t)\|^p <\infty.
\end{equation}
\end{Lemma}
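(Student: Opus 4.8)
The plan is to exploit the fact that, for each fixed $t\geq 0$ and $\e\in(0,1)$, the random variable $W^\e_{A_2}(t)$ is a centered Gaussian random variable with values in $H$, so that all of its moments are controlled by its second moment, and then to compute that second moment explicitly in the eigenbasis $\{e_{2,k}\}_{k\in\mathbb{N}}$, checking that the resulting bound is independent of both $t$ and $\e$.

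First I would write $W^{2,\e}_t=\sum_{k}\beta_k(t)\,e_{2,k}$ for a sequence $\{\beta_k\}_{k\in\mathbb{N}}$ of independent standard real Brownian motions on $(\Omega^\e,\mathcal{F}^\e,\mathbb{P}^\e)$. Since the vectors $e_{2,k}$ diagonalize both $A_2$ and $Q_2$, one obtains
\[
W^\e_{A_2}(t)=\frac{1}{\sqrt\e}\sum_{k}\la_{2,k}\left(\int_0^t e^{-\a_{2,k}(t-s)/\e}\,d\beta_k(s)\right)e_{2,k},
\]
and the It\^o isometry gives
\[
\mathbb{E}^\e\|W^\e_{A_2}(t)\|^2=\frac1\e\sum_{k}\la_{2,k}^2\int_0^t e^{-2\a_{2,k}(t-s)/\e}\,ds=\sum_{k}\frac{\la_{2,k}^2}{2\a_{2,k}}\left(1-e^{-2\a_{2,k}t/\e}\right)\leq \sum_{k}\frac{\la_{2,k}^2}{2\a_{2,k}}.
\]
(In particular this shows the series defining $W^\e_{A_2}(t)$ converges in $L^2(\Omega^\e;H)$, so the convolution is a well-defined $H$-valued Gaussian vector.) The crucial observation is that the last bound is finite and independent of $t$ and $\e$: since $\{\a_{2,k}\}_{k\in\mathbb{N}}$ is non-decreasing and $\a_{2,1}>0$ by \eqref{alpha}, we have $\a_{2,k}^{-1}\leq \a_{2,1}^{-2\gamma_2}\a_{2,k}^{2\gamma_2-1}$ for every $k$, so that by \eqref{reg}
\[
\sum_{k}\frac{\la_{2,k}^2}{2\a_{2,k}}\leq \frac{\a_{2,1}^{-2\gamma_2}}{2}\sum_{k}\la_{2,k}^2\a_{2,k}^{2\gamma_2-1}<\infty .
\]

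To pass from $p=2$ to general $p\geq 1$, I would invoke the equivalence of all moments of a Hilbert-space-valued Gaussian random variable (Fernique's theorem): there is a constant $C_p$, depending only on $p$, with $\mathbb{E}^\e\|W^\e_{A_2}(t)\|^p\leq C_p\,(\mathbb{E}^\e\|W^\e_{A_2}(t)\|^2)^{p/2}$, and combining this with the bound above yields \eqref{eq sc2}. An equivalent route is to note the scaling identity: by Brownian self-similarity, setting $\tau=t/\e$ one checks that $W^\e_{A_2}(t)$ has the same law as the unscaled stochastic convolution $\int_0^\tau S_2(\tau-r)Q_2\,d\widetilde W_r$, so $\sup_{t\geq 0}\mathbb{E}^\e\|W^\e_{A_2}(t)\|^p$ does not depend on $\e$ and one is reduced to a single standard estimate. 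I do not expect a genuine obstacle here; the only point requiring care is the uniformity in $\e$, which is resolved precisely by the cancellation of the factor $1/\e$ against the $\e$ generated by integrating $e^{-2\a_{2,k}(t-s)/\e}$ (equivalently, by the time rescaling $\tau=t/\e$), together with the elementary remark that $\gamma_2>0$ and $\a_{2,1}>0$ allow one to absorb the extra power of $\a_{2,k}$ into \eqref{reg}.
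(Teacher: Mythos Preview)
Your proof is correct and follows essentially the same approach as the paper: both expand in the eigenbasis $\{e_{2,k}\}$, reduce to bounding $\sum_k \la_{2,k}^2\,\a_{2,k}^{-1}$ uniformly in $t$ and $\e$, and invoke \eqref{reg} via the inequality $\a_{2,k}^{-1}\leq \a_{2,1}^{-2\gamma_2}\a_{2,k}^{2\gamma_2-1}$. The only cosmetic difference is that the paper handles all $p$ at once via the Burkholder--Davis--Gundy inequality, whereas you compute the second moment by It\^o isometry and then lift to general $p$ by Gaussian moment equivalence; both routes are standard and equivalent here.
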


\begin{proof}Let $\{\beta^{2,\e}_k\}_{k \in \mathbb{N}}$ be a sequence of independent Brownian motions such that
\[
W^{2,\e}_t = \sum_{k=1}^{\infty} e_{2,k} \beta^{2,\e}_k(t),\ \ \ \ t\geq 0.
\]
Thanks to the Burkholder-Davis-Gundy inequality, we have
\[
\begin{aligned}
\mathbb{E}^\e\,\| {W}^{\e}_{A_2}(t)\|^p=&\e^{-p/2}\,\mathbb{E}^{\e}\left\| \int_0^t S_2((t-s)/\e)  \sum_{k=1}^{\infty} \lambda_{2,k}\,e_{2,k}\, d \beta^{2,\e}_k(s)\right\|^{p}\\[10pt]
\leq& \,c_p\,\e^{-p/2}\, \left(\,\sum_{k=1}^{\infty} \int_0^t e^{ -2\a_{2,k}\left( \frac{t-s}{\e}\right)}  \lambda_{2,k}^2 ds\right)^{p/2}\leq c_p \left(\,\sum_{k=1}^{\infty}  \a_{2,k}^{-1} \lambda_{2,k}^2\right)^{p/2}.\\
\end{aligned}
\]
Now, thanks to \eqref{alpha}, we have
\[
\sum_{k=1}^{\infty}  \a_{2,k}^{-1}\, \lambda_{2,k}^2  < \a_{2,1}^{-2\gamma_2}\sum_{k=1}^{\infty}  \a_{2,k}^{2\gamma_2-1} \lambda_{2,k}^2< +\infty,
\]
and  \eqref{eq sc2} follows.
\end{proof}

Notice that we also have tha for every $p\geq 1$
\begin{equation}
\label{eq sc2-bis}
\sup_{\e \in\,(0,1)}\mathbb{E}^\e \sup_{t \in\,[0,T]}\Vert W^\e_{A_2}\Vert^p<\infty.	
\end{equation}

Next, we want to investigate the time-continuity  of the stochastic convolution. Thanks to Hypothesis \ref{h1}, 
for every $p\geq 1$ and $0\leq \gamma < \gamma_1^\star:=\gamma_1\wedge 1/2$ we have
\begin{equation}
\label{eq scc1}
\sup_{\e \in\,(0,1)}{\mathbb{E}}^{\e}\,\|{W}_{A_1}^{\e}(t+h) - {W}_{A_1}^{\e}(t)\|_{D((-A_1)^{\gamma})}^p \leq c\, h^{(\gamma_1^\star-\gamma)\,p},\ \ \ \ \ \ t \geq 0,\ \ h \in\,(0,1).
\end{equation}

The Garcia-Rademich-Rumsey theorem, together with \eqref{eq scc1}, imply that there exists $\beta>0$ such that
\begin{equation}
\label{eq scc1.1}
\begin{array}{l}
\ds{\sup_{\e \in\,(0,1)}\mathbb{E}^{\e} \sup_{t,s, \in [0,T]} \|W_{A_1}^{\e}(t)-W_{A_1}^{\e}(s)\|_{D((-A_1)^{\gamma})}^2\,|t-s|^{-\beta}<\infty.}
\end{array}
\end{equation}

\begin{Lemma}\label{l0.2}
Assume Hypothesis \ref{h1} hold and fix 
 $0\leq \gamma < \gamma_2^\star:=\gamma_2\wedge 1/2$. Then,  we have
\begin{equation}\label{eq scc2}
\sup_{\e \in\,(0,1)}\e^{(\gamma_2^\star-\gamma)2}\,\mathbb{E}^{\e}\,\|{W}_{A_2}^{\e}(t+h) - {W}_{A_2}^{\e}(t)\|_{D((-A_2)^{\gamma})}^2 \leq c\, h^{(\gamma_2^\star -\gamma)2},\ \ \ \ t \geq 0,\ \ h \in\,(0,1).
\end{equation}
\end{Lemma}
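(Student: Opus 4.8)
The plan is to write the increment of $W^\e_{A_2}$ as the sum of a ``fresh-noise'' stochastic integral over $[t,t+h]$ and a ``semigroup-increment'' stochastic integral over $[0,t]$, compute the second moments via It\^o's isometry in the eigenbasis $\{e_{2,k}\}_k$ (which simultaneously diagonalizes $A_2$ and $Q_2$), and then close the estimates with the elementary inequality $1-e^{-r}\leq r^{\vartheta}$, valid for $r\geq 0$ and $\vartheta\in[0,1]$, applied with $\vartheta=\gamma_2^\star-\gamma$ (note that $0\leq 2(\gamma_2^\star-\gamma)\leq 2\gamma_2^\star\leq 1$). Concretely, denoting by $\{\beta^{2,\e}_k\}_k$ the independent Brownian motions with $W^{2,\e}_t=\sum_k e_{2,k}\beta^{2,\e}_k(t)$, the starting point is the decomposition
\[
W^\e_{A_2}(t+h)-W^\e_{A_2}(t)=\frac{1}{\sqrt{\e}}\int_t^{t+h}S_2\!\left(\tfrac{t+h-s}{\e}\right)Q_2\,dW^{2,\e}_s+\frac{1}{\sqrt{\e}}\int_0^t\!\left[S_2\!\left(\tfrac{t+h-s}{\e}\right)-S_2\!\left(\tfrac{t-s}{\e}\right)\right]Q_2\,dW^{2,\e}_s=:I_1+I_2.
\]

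For $I_1$, It\^o's isometry and the change of variables $u=t+h-s$ give
\[
\mathbb{E}^\e\|I_1\|_{D((-A_2)^\gamma)}^2=\frac{1}{\e}\sum_k\a_{2,k}^{2\gamma}\lambda_{2,k}^2\int_0^h e^{-2\a_{2,k}u/\e}\,du=\frac{1}{2}\sum_k\a_{2,k}^{2\gamma-1}\lambda_{2,k}^2\bigl(1-e^{-2\a_{2,k}h/\e}\bigr),
\]
and, bounding $1-e^{-2\a_{2,k}h/\e}\leq(2\a_{2,k}h/\e)^{2(\gamma_2^\star-\gamma)}$, the last sum is at most $c\,(h/\e)^{2(\gamma_2^\star-\gamma)}\sum_k\a_{2,k}^{2\gamma_2^\star-1}\lambda_{2,k}^2$, and this series converges because $\gamma_2^\star\leq\gamma_2$ forces $\a_{2,k}^{2\gamma_2^\star-1}\leq\a_{2,1}^{2(\gamma_2^\star-\gamma_2)}\a_{2,k}^{2\gamma_2-1}$ by \eqref{alpha}, so that \eqref{reg} applies. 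For $I_2$, the semigroup law gives $S_2((t+h-s)/\e)-S_2((t-s)/\e)=S_2((t-s)/\e)[S_2(h/\e)-I]$, hence by It\^o's isometry
\[
\mathbb{E}^\e\|I_2\|_{D((-A_2)^\gamma)}^2=\frac{1}{\e}\sum_k\a_{2,k}^{2\gamma}\lambda_{2,k}^2\bigl(1-e^{-\a_{2,k}h/\e}\bigr)^2\int_0^t e^{-2\a_{2,k}(t-s)/\e}\,ds\leq\frac{1}{2}\sum_k\a_{2,k}^{2\gamma-1}\lambda_{2,k}^2\bigl(1-e^{-\a_{2,k}h/\e}\bigr)^2;
\]
since $1-e^{-\a_{2,k}h/\e}\in[0,1]$ and $2(\gamma_2^\star-\gamma)\leq 2$, we have $\bigl(1-e^{-\a_{2,k}h/\e}\bigr)^2\leq\bigl(1-e^{-\a_{2,k}h/\e}\bigr)^{2(\gamma_2^\star-\gamma)}\leq(\a_{2,k}h/\e)^{2(\gamma_2^\star-\gamma)}$, which yields for $I_2$ the same bound obtained for $I_1$.

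Putting the two contributions together, $\mathbb{E}^\e\|W^\e_{A_2}(t+h)-W^\e_{A_2}(t)\|_{D((-A_2)^\gamma)}^2\leq c\,(h/\e)^{2(\gamma_2^\star-\gamma)}$ uniformly in $t\geq 0$, $h\in(0,1)$ and $\e\in(0,1)$, and multiplying by $\e^{2(\gamma_2^\star-\gamma)}$ produces exactly \eqref{eq scc2}. I do not expect a genuine obstacle here: the only delicate point is the bookkeeping of the powers of $\e$ arising from the $1/\sqrt{\e}$ prefactor, the time rescaling $(t-s)/\e$ inside the exponentials, and the substitution in the time integrals --- and it is precisely this bookkeeping that forces the compensating factor $\e^{2(\gamma_2^\star-\gamma)}$ in the statement --- together with checking that the exponent $2(\gamma_2^\star-\gamma)$ remains in $[0,1]$, so that both the interpolation bound for $1-e^{-r}$ and the summability of $\sum_k\a_{2,k}^{2\gamma_2^\star-1}\lambda_{2,k}^2$ hold.
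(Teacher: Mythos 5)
Your proof is correct, but it follows a genuinely different route from the paper's. The paper does not redo the spectral computation at all: it observes that, by Brownian scaling, $\mathcal{L}\big(\e^{-1/2}W^{2,\e}(\cdot)\big)=\mathcal{L}\big(W^{2,\e}(\cdot/\e)\big)$, hence $\mathcal{L}\big(W^{\e}_{A_2}(\cdot)\big)$ coincides with the law of the unaccelerated stochastic convolution evaluated at the rescaled times $\cdot/\e$; it then invokes the increment estimate \eqref{eq scc1} (with $A_2$, $Q_2$ in place of $A_1$, $Q_1$) at the times $t/\e$ and $(t+h)/\e$, obtaining the bound $c\,(h/\e)^{2(\gamma_2^\star-\gamma)}$, and multiplies through by $\e^{2(\gamma_2^\star-\gamma)}$. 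Your argument instead splits the increment into the fresh-noise integral on $[t,t+h]$ and the semigroup-increment integral on $[0,t]$, applies the It\^o isometry in the common eigenbasis, and closes with $1-e^{-r}\leq r^{\vartheta}$ for $\vartheta=2(\gamma_2^\star-\gamma)\in[0,1]$ (your preamble says $\vartheta=\gamma_2^\star-\gamma$, but the computation correctly uses $2(\gamma_2^\star-\gamma)$, so this is only a slip of wording); the exponent bookkeeping $2\gamma-1+2(\gamma_2^\star-\gamma)=2\gamma_2^\star-1$ and the comparison $\a_{2,k}^{2\gamma_2^\star-1}\leq \a_{2,1}^{2(\gamma_2^\star-\gamma_2)}\a_{2,k}^{2\gamma_2-1}$ via \eqref{alpha} and \eqref{reg} are all in order, and independence (or simply the elementary inequality $\|I_1+I_2\|^2\leq 2\|I_1\|^2+2\|I_2\|^2$) finishes the job. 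What your version buys is a self-contained computation that makes the $\e$-bookkeeping explicit and, notably, establishes the increment bound for increments $h/\e$ of arbitrary size, which the paper's reduction implicitly relies on since \eqref{eq scc1} is stated only for increments in $(0,1)$ while $h/\e$ may well exceed $1$; what the paper's scaling argument buys is brevity, since it reuses the standard estimate for the $\e$-free convolution without repeating the spectral calculation.
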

\begin{proof}
We recall that
\[
{W}_{A_2}^{\e}(t)= \frac{1}{\sqrt{\e}}\int_0^t S_2\left((t-s)/\e\right) Q_2\, d{W}_s^{2,\e}.
\]
Therefore, since
\[
\mathcal{L}\left(\frac{1}{\sqrt{\e}}\,W^{2,\e}_{A_2}(\cdot)\right) = \mathcal{L}\left(W^{2,\e}_{A_2}(\cdot/\e)\right),
\]
we have
\[
\mathcal{L}\left( {W}_{A_2}^{\e}(\cdot) \right)= \mathcal{L}\left( \int_0^{\cdot/\e} S_2\left(\cdot/\e-s\right) Q_2\, d{W}_s^{2,\e}  \right).
\]
In particular, since $A_2$ satisfies the same assumptions as $A_1$, we can apply \eqref{eq scc1}, and we get \eqref{eq scc2}.
\end{proof}

\subsection{Estimates for the slow motion $u^{\e}(t)$}\label{sec u}

 For every $\e \in\,(0,1)$, we define
\[
\Psi^{\e} (t):= \int_0^t S_1(t-s) F_{1}(s, {X}^{\e}(s)) ds,\ \ \ \ \ \ \ t \in\,[0,T].
\]
By using the same arguments used in the proof of Lemma \ref{l0.3} and Lemma \ref{l0.4}, it is possible to prove that  under  Hypotheses \ref{h1}, \ref{h2} and \ref{h4}, for every $\gamma \in [0,1/2)$ 
\begin{equation}
\label{sy1}
\sup_{\e \in (0,1)}
\mathbb{E}^{\e}\, \sup_{t \in [0,T]} \|\Psi^{\e}(t)\|_{D((-A_1)^\gamma)}^2\leq c_{T,\gamma}V(u_0,v_0),\ \ \ \ t \in\,[0,T].
\end{equation}
Moreover, there exists some $\beta>0$ such that
\begin{equation}
\label{sy2}
\sup_{\e \in (0,1)}\mathbb{E}^{\e} \sup_{t,s \in [0,T]}\|\Psi^{\e}(t)- \Psi^{\e}(s)\|_{D((-A_1)^{\gamma})}^{2}|t-s|^{-\beta} \leq c_T\,V(u_0,v_0).
\end{equation}

Therefore, it is possible to prove the following result.

\begin{Lemma}\label{l1.1}
Assume that Hypotheses \ref{h1}, \ref{h2} and \ref{h4} hold. Then for any initial condition $ (u_0,v_0)\in\,D_V$ and $0\leq \gamma < \gamma_1^\star$, we have
\begin{equation}\label{l1.1eq1}
\sup_{ \e \in (0,1)}{\mathbb{E}}^{\e}\|{u}^\e(t)\|_{D((-A_1)^{\gamma})}^{2} \leq c\left(t^{-2\gamma }+1\right)\,\left(1+\Vert u_0\Vert^2+V(u_0,v_0)\right),\ \ \ \ \ t \in\,[0,T].
\end{equation}
Moreover,
\begin{equation}\label{l1.1eq2}
\begin{aligned}
\sup_{ \e \in (0,1)} {\mathbb{E}}^{\e}\,\|{u}^\e(t)-{u}^{\e}(s)\|_{D((-A_1)^{\gamma})}^{2}
 \leq c\, \rho_{\gamma}(s,t)\left(1+\Vert u_0\Vert^2+V(u_0,v_0)\right),\ \ \ \ \ \ \ 0\leq s\leq t\leq T,
\end{aligned}
\end{equation}
where 
\begin{equation}
\label{sy6}
\rho_{\gamma}(s,t):= \left(\int_s^{t} r^{-(\gamma +1)} dr \right)^{2} + (t-s)^{\beta} + (t-s)^{2(\gamma_1^\star- \gamma)},\ \ \ \ \ \ \ s\leq t,
\end{equation}
and $\beta$ is the  constant introduced in \eqref{sy2}. Finally, if $u_0 \in D((-A_1)^{\delta})$, for some $\delta >\gamma$, then there exists  $\eta=\eta(\gamma,\delta)>0$ such that 
\begin{equation}\label{l1.1eq3}
\sup_{ \e \in (0,1)} {\mathbb{E}}^{\e}\,\|{u}^\e(t)-{u}^{\e}(s)\|_{D((-A_1)^{\gamma})}^{2}
 \leq c\, (t-s)^\eta\left(1+\Vert u_0\Vert^2_{D((-A_1)^\delta)}+V(u_0,v_0)\right),\ \ \ \ \ \ \ 0\leq s\leq t\leq T.\end{equation}

\end{Lemma}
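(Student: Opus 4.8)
The plan is to estimate each of the three pieces in the mild formula \eqref{equ} for $u^\e$ separately: the deterministic flow $S_1(t)u_0$, the drift term $\Psi^\e(t)=\int_0^t S_1(t-s)F_1(s,X^\e(s))\,ds$, and the stochastic convolution $W_{A_1}^\e(t)$. The bounds \eqref{sy1}, \eqref{sy2} for $\Psi^\e$ and \eqref{eq sc1}, \eqref{eq scc1.1} for $W_{A_1}^\e$ are already available and give, in the $D((-A_1)^\gamma)$-norm, the terms $(t-s)^\beta$ and $(t-s)^{2(\gamma_1^\star-\gamma)}$ in $\rho_\gamma$ up to the factor $1+\|u_0\|^2+V(u_0,v_0)$. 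So the only genuinely new work concerns the semigroup term $S_1(t)u_0$.

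For \eqref{l1.1eq1}, the semigroup contribution is handled by the standard analytic-semigroup smoothing estimate $\|(-A_1)^\gamma S_1(t)\|_{\mathcal{L}(H)}\le c\,t^{-\gamma}$ (valid here since $\alpha_{1,1}>0$), so $\|S_1(t)u_0\|_{D((-A_1)^\gamma)}\le c\,t^{-\gamma}\|u_0\|$; squaring gives the $t^{-2\gamma}$ prefactor, and combining with \eqref{sy1} and \eqref{eq sc1} yields \eqref{l1.1eq1}. For the increment bound \eqref{l1.1eq2}, I would write $S_1(t)u_0-S_1(s)u_0=(S_1(t-s)-I)S_1(s)u_0$ and use $\|(S_1(r)-I)(-A_1)^{-\theta}\|_{\mathcal{L}(H)}\le c\,r^\theta$ together with $\|(-A_1)^{\gamma+\theta}S_1(s)\|\le c\,s^{-(\gamma+\theta)}$, choosing $\theta$ appropriately; alternatively, and more cleanly matching the stated $\rho_\gamma$, use the identity $S_1(t)u_0-S_1(s)u_0=\int_s^t A_1 S_1(r)u_0\,dr$ so that
\[
\|S_1(t)u_0-S_1(s)u_0\|_{D((-A_1)^\gamma)}\le \int_s^t \|(-A_1)^{\gamma+1}S_1(r)u_0\|\,dr\le c\,\|u_0\|\int_s^t r^{-(\gamma+1)}\,dr,
\]
which after squaring produces exactly the first term in \eqref{sy6}. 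Adding \eqref{sy2} and \eqref{eq scc1.1} then gives \eqref{l1.1eq2}.

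For the final improved estimate \eqref{l1.1eq3}, the point is that when $u_0\in D((-A_1)^\delta)$ with $\delta>\gamma$ the singular factor $r^{-(\gamma+1)}$ is no longer needed: using $S_1(t)u_0-S_1(s)u_0=(S_1(t-s)-I)S_1(s)u_0$ and the bound $\|(S_1(t-s)-I)(-A_1)^{\gamma}x\|\le c\,(t-s)^{\delta-\gamma}\|(-A_1)^\delta x\|$ (for $0\le\delta-\gamma\le 1$), applied with $x=S_1(s)u_0$ and using $\|(-A_1)^\delta S_1(s)u_0\|\le\|(-A_1)^\delta u_0\|$, we obtain $\|S_1(t)u_0-S_1(s)u_0\|_{D((-A_1)^\gamma)}\le c\,(t-s)^{\delta-\gamma}\|u_0\|_{D((-A_1)^\delta)}$. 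The remaining two contributions $\Psi^\e$ and $W_{A_1}^\e$ already satisfy Hölder-type increment bounds with positive exponents ($\beta$ and $2(\gamma_1^\star-\gamma)$) from \eqref{sy2} and \eqref{eq scc1.1}, so taking $\eta:=\min\{\delta-\gamma,\ \beta/2,\ \gamma_1^\star-\gamma\}>0$ (and noting $T$ is finite, so lower powers of $(t-s)$ dominate higher ones) gives \eqref{l1.1eq3}.

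The main obstacle is essentially bookkeeping rather than a deep difficulty: one must make sure that the three exponents in $\rho_\gamma$ are each attributable to the correct term, that the constant $\beta$ is the one fixed in \eqref{sy2}, and that the constraint $\gamma<\gamma_1^\star=\gamma_1\wedge 1/2$ is exactly what is needed for \eqref{eq scc1.1} to apply with a positive Hölder exponent. Care is also needed that the expectations are of second moments only (so no stochastic-convolution moment estimates beyond $p=2$ are invoked) and that the dependence on the data appears solely through the combination $1+\|u_0\|^2+V(u_0,v_0)$ (respectively $1+\|u_0\|_{D((-A_1)^\delta)}^2+V(u_0,v_0)$), which is already how \eqref{sy1}, \eqref{sy2} and \eqref{eq sc1} are stated.
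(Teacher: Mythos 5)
Your proposal is correct and follows essentially the same route as the paper: bound the three pieces of the mild formula \eqref{equ} separately, using the smoothing estimate for $S_1(t)u_0$, the already-established bounds \eqref{sy1}--\eqref{sy2} for $\Psi^\e$, and \eqref{eq sc1}, \eqref{eq scc1} for the stochastic convolution, with the integral identity $S_1(t)u_0-S_1(s)u_0=\int_s^t A_1S_1(r)u_0\,dr$ producing the first term of $\rho_\gamma$. The only cosmetic difference is in \eqref{l1.1eq3}, where you invoke the bound $\|(S_1(t-s)-I)(-A_1)^{-(\delta-\gamma)}\|_{\mathcal{L}(H)}\leq c\,(t-s)^{\delta-\gamma}$ while the paper keeps the same integral and uses $t^{\delta-\gamma}-s^{\delta-\gamma}\leq (t-s)^{\delta-\gamma}$; these are equivalent standard facts, and your slightly smaller exponent $\eta$ is harmless since $T$ is finite and only some positive $\eta$ is claimed.
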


\begin{proof}
Since 
\[\Vert S_1(t) u_0\Vert_{D((-A_1)^{\gamma}}\leq c\,t^{-\gamma}\Vert u_0\Vert,\]
\eqref{l1.1eq1} follows from \eqref{eq sc1} and \eqref{sy1}. Moreover, 
for every $\d\ \in\,[0,\gamma+1]$, we have
\[
\begin{aligned}
\|S_1(t)u_0 - S_1(s)u_0 \|_{D((-A_1)^{\gamma})}&\leq  \int_s^t \left\|(-A_1)^{\gamma+1-\delta}S_1(r) (-A_1)^{\delta}u_0\right\|\,dr\\[10pt]
&\leq \int_s^t r^{\delta-(\gamma+1)} dr\ \| u_0 \|_{D((-A_1)^{\delta})}.
\end{aligned}
\]
Therefore, thanks to \eqref{eq scc1}, and \eqref{sy2}, if we take $\delta=0$
we get  \eqref{l1.1eq2}.

Now, since
\[
a^\la-b^\la \leq (a-b)^{\la},\ \ \ \ \ \la \in (0,1),\ \ \ \ 0\leq b\leq a, 
\]
if we take $\delta \in\,(\gamma, \gamma+1]$ we have
\[
\int_s^t r^{\delta-(\gamma+1)}\,dr=\frac 1{\delta-\gamma}\left(t^{\delta-\gamma}- s^{\delta- \gamma}\right) \leq (t-s)^{\delta- \gamma},\ \ \ \ \ t>s.
\]
Therefore, \eqref{l1.1eq3} holds for any $\gamma<\gamma_1^\star$ and $\delta>\gamma$.
\end{proof}

Notice that from \eqref{eq sc1} and \eqref{sy1} we have also
\begin{equation}
\label{sy1000}
\sup_{\e \in\,(0,1)} \mathbb{E}^\e \sup_{t \in\,[0,T]} \Vert u^\e(t)\Vert\leq c_T\,\left(1+\Vert u_0\Vert^2+V(u_0,v_0)\right).	
\end{equation}

\subsection{Estimates for the fast motion $v^{\e}(t)$}
\begin{Lemma}\label{l1.3}
Under Hypotheses \ref{h1} to \ref{h4} we have
\begin{equation}
\sup_{\e \in (0,1)}{\mathbb{E}}^{\e}\sup_{t \in\,[0,T] }\|{v}^{\e}(t)\|^{2} <\infty.
\end{equation}
\end{Lemma}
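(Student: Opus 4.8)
The plan is to subtract off the stochastic convolution and run a dissipativity estimate on the remainder, using the strict inequality $\omega=\a_{2,1}-L_2>0$ from Hypothesis~\ref{h3} to absorb the factor $\e^{-1}$ in front of the drift of the fast equation.

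First I would set $\Gamma^\e(t):=v^\e(t)-W_{A_2}^\e(t)$. From \eqref{eqv}, $\Gamma^\e$ is the mild solution of
\[
\frac{d\Gamma^\e(t)}{dt}=\frac1\e\left[A_2\,\Gamma^\e(t)+F_2(u^\e(t),v^\e(t))\right],\qquad \Gamma^\e(0)=v_0,
\]
and since $t\mapsto F_2(X^\e(t))$ is continuous ($F_2$ is Lipschitz and $X^\e\in C([0,T];D(F^\e))$) and $S_2$ is analytic, $\Gamma^\e$ is a classical solution on $(0,T]$, so $t\mapsto\|\Gamma^\e(t)\|^2$ is differentiable there. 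Using $\langle A_2x,x\rangle\le-\a_{2,1}\|x\|^2$ and the splitting
\[
\langle F_2(u^\e,v^\e),\Gamma^\e\rangle=\langle F_2(u^\e,v^\e)-F_2(u^\e,W_{A_2}^\e),\Gamma^\e\rangle+\langle F_2(u^\e,W_{A_2}^\e),\Gamma^\e\rangle,
\]
where the first inner product is $\le L_2\|\Gamma^\e\|^2$ (because $v^\e-W_{A_2}^\e=\Gamma^\e$) and the second is estimated by Young's inequality together with the linear growth of $F_2$, I would arrive at
\[
\frac{d}{dt}\|\Gamma^\e(t)\|^2\le-\frac\omega\e\,\|\Gamma^\e(t)\|^2+\frac c\e\left(1+\|u^\e(t)\|^2+\|W_{A_2}^\e(t)\|^2\right).
\]

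A comparison argument then gives, for every $t\in[0,T]$,
\[
\|\Gamma^\e(t)\|^2\le e^{-\omega t/\e}\|v_0\|^2+\frac c\e\int_0^t e^{-\omega(t-s)/\e}\left(1+\|u^\e(s)\|^2+\|W_{A_2}^\e(s)\|^2\right)ds,
\]
and since $\e^{-1}\int_0^t e^{-\omega(t-s)/\e}\,ds\le\omega^{-1}$ uniformly in $\e\in(0,1)$ and $t\in[0,T]$, taking $\sup_{t\in[0,T]}$ and then $\mathbb{E}^\e$ yields
\[
\mathbb{E}^\e\sup_{t\in[0,T]}\|\Gamma^\e(t)\|^2\le\|v_0\|^2+c\left(1+\mathbb{E}^\e\sup_{t\in[0,T]}\|u^\e(t)\|^2+\mathbb{E}^\e\sup_{t\in[0,T]}\|W_{A_2}^\e(t)\|^2\right).
\]
The convolution term is bounded uniformly in $\e$ by \eqref{eq sc2-bis}, and $\mathbb{E}^\e\sup_{[0,T]}\|u^\e\|^2$ is bounded uniformly in $\e$ by writing $u^\e=S_1(\cdot)u_0+\Psi^\e+W_{A_1}^\e$, using $\|S_1(t)u_0\|\le\|u_0\|$ together with \eqref{sy1} and \eqref{eq sc1} (with $\gamma=0$ and $p=2$). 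The conclusion then follows from $\|v^\e(t)\|^2\le 2\|\Gamma^\e(t)\|^2+2\|W_{A_2}^\e(t)\|^2$ and one more application of \eqref{eq sc2-bis}.

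The one point that needs care is the interplay between the $\e^{-1}$ multiplying the drift and the dissipation: the decay rate of the Green's kernel must also be of order $\e^{-1}$ so that its time integral remains $O(1)$ as $\e\to 0$, which is exactly what $\omega>0$ guarantees; the strict-positivity assumption \eqref{dissi} is essential here. The other (routine) technical point is justifying the differential inequality for the mild solution $\Gamma^\e$, which follows from analyticity of $S_2$ and continuity of $t\mapsto F_2(X^\e(t))$, or alternatively by running the estimate on Yosida approximations of $A_2$ and passing to the limit.
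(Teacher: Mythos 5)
Your proposal is correct and follows essentially the same route as the paper: subtract the stochastic convolution, use the dissipativity $\omega=\a_{2,1}-L_2>0$ from Hypothesis \ref{h3} together with Young's inequality to get the differential inequality for $\|\Gamma^\e(t)\|^2$, conclude by comparison, and control the remaining terms via \eqref{eq sc2-bis} and the uniform bound on $\mathbb{E}^\e\sup_t\|u^\e(t)\|^2$ coming from \eqref{eq sc1} and \eqref{sy1}. The only additions are the explicit remark that the $\e^{-1}$ is absorbed because the kernel integrates to $O(1)$ and the justification of the differential inequality for the mild solution, both of which the paper leaves implicit.
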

\begin{proof}
If we define $\Gamma^{\e}(t):= {v}^{\e}_{}(t)- {W}_{A_2}^{\e}(t)$, we have
\[
\begin{aligned}
\frac{d\Gamma^{\e}(t)}{dt} =& \frac 1\e\, A_2\Gamma^{\e}(t) + \frac 1\e (F_{2}({u}^\e(t), \Gamma^{\e}(t) + {W}_{A_2}^{\e}(t)) - F_{2}({u}^\e(t), {W}_{A_2}^{\e}(t)) )\\[14pt]
&+ \frac 1\e\,F_{2}({u}^\e(t),  {W}_{A_2}^{\e}(t)).\\
\end{aligned}
\]
Thanks to Hypothesis \ref{h3}, we have
\[
\frac{1}{2} \frac{d}{dt} \|\Gamma^{\e}(t)\|^{2} \leq - \frac{\omega}{\e}  \|\Gamma^{\e}(t)\|^{2} + \frac{1}{\e} \|F_2(u^{\e}(t), {W}^{\e}_{A_2}(t))\| \|\Gamma^{\e}(t)\|^{},
\]
and the Young's inequality gives
\[
\begin{aligned}
& \frac{d}{dt} \|\Gamma^{\e}(t)\|^{2} \leq -\frac{\omega}{2\e} \|\Gamma^{\e}(t)\|^{2}+  \frac{c}{\e} (1 + \|{u}^{\e}(t)\|^{2} + \| {W}^{\e}_{A_2}(t) \|^2).
\end{aligned}
\]
By comparison, this implies
\[
\begin{array}{ll}
\ds{\|{v}^\e(t)\|^{2} }&\ds{ \leq\,2\,\|\Gamma^{\e}(t)\|^{2} +2\, \|{W}_{A_2}^{\e}(t)\|^{2}}\\[10pt]
& \ds{\leq  2\, e^{-\frac{t\omega}{\e}} \|v_0\|^{2}+\frac{c}{\e} \int_0^t e^{-\frac{(t-s)\omega}{2\e}} (1 + \|{u}^{\e}(s)\|^{2} + \|{W}_{A_2}^{\e}(s)\|^{2}) ds+ c\, \|{W}_{A_2}^{\e}(t)\|^{2}.}
\end{array}
\]
Therefore, in view of \eqref{eq sc2-bis} and \eqref{sy1000}, this gives
\[
\begin{aligned}
{\mathbb{E}}^{\e}\sup_{t \in\,[0,T]}\|{v}^\e(t)\|^{2} \leq  c \left(1+\Vert (u_0,v_0)\Vert_{\mathcal{H}_2}^2+V(u_0,v_0)\right),
\end{aligned}
\]
where $c $ is a constant independent of $\e \in\,(0,1)$.
\end{proof}

\subsection{Proof of the tightness}
If we define $y^{\e}(t) := u^\e(t)-S_1(t)u_0$, thanks to \eqref{eq sc1} and \eqref{sy1},  we have that for  $\gamma<\gamma_1^\star$ 
\begin{equation}  \label{sy1001}
\sup_{\e \in (0,1)}\mathbb{E}^{\e}\ \sup_{t \in [0,T]} \|y^{\e}(t)\|_{D((-A_1)^\gamma)}^{2}<\infty.
\end{equation}
Moreover, thanks to \eqref{eq scc1.1} and \eqref{sy2}, there exists  some  $\beta>0$ sufficiently small, we have
\begin{equation}  \label{sy1002}
\sup_{\e \in (0,1)}\mathbb{E}^{\e} \sup_{t,s \in [0,T]}\|y^{\e}(t)-y^{\e}(s)\|^{2} ds\ |t-s|^{-\beta}<\infty.
\end{equation}

\begin{Lemma}\label{l4.1.6}
Under Hypotheses \ref{h1} to \ref{h4}, the family of measures $\{ \mathcal{L}(u^\e)\}_{\e \in (0,1)}$ is tight in $C([0,T];H)$. 
\end{Lemma}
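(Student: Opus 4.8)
The plan is to derive tightness from the Arzelà–Ascoli characterization of relative compactness in $C([0,T];H)$, using the two uniform bounds \eqref{sy1001} and \eqref{sy1002} already established for the centered process $y^\e(t)=u^\e(t)-S_1(t)u_0$. Note first that $u^\e\in C([0,T];H)$ almost surely, since $X^\e=(u^\e,v^\e)$ takes values in $C([0,T];D(F^\e))\subseteq C([0,T];\mathcal{H}_2)$ by Hypothesis \ref{h4}, so each $\mathcal{L}(u^\e)$ is a Borel probability measure on $C([0,T];H)$, and by \eqref{equ} we have $y^\e=\Psi^\e+W^\e_{A_1}$. Fix $\gamma\in(0,\gamma_1^\star)$ and let $\beta>0$ be the exponent from \eqref{sy1002}; then \eqref{sy1001} and \eqref{sy1002} give constants $C_1,C_2<\infty$, independent of $\e$, with
\[
\sup_{\e\in(0,1)}\mathbb{E}^\e\sup_{t\in[0,T]}\|y^\e(t)\|^2_{D((-A_1)^\gamma)}\leq C_1,\qquad \sup_{\e\in(0,1)}\mathbb{E}^\e\sup_{\substack{s,t\in[0,T]\\ s\neq t}}\frac{\|y^\e(t)-y^\e(s)\|^2}{|t-s|^{\beta}}\leq C_2.
\]
Since $t\mapsto S_1(t)u_0$ is a single fixed element of $C([0,T];H)$ (by strong continuity of $S_1$), the deterministic part plays no role.

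For $R,L>0$ I would then set
\[
K_{R,L}:=\Big\{f\in C([0,T];H):\ \sup_{t\in[0,T]}\|f(t)\|_{D((-A_1)^\gamma)}\leq R,\quad \sup_{s\neq t}\frac{\|f(t)-f(s)\|}{|t-s|^{\beta/2}}\leq L\Big\},
\]
and check that $K_{R,L}$ is compact in $C([0,T];H)$: the uniform $\tfrac\beta2$-Hölder bound makes $K_{R,L}$ equicontinuous, while for each $t$ the set $\{f(t):f\in K_{R,L}\}$ lies in the closed ball of radius $R$ of $D((-A_1)^\gamma)$, which is relatively compact in $H$ because the embedding $D((-A_1)^\gamma)\hookrightarrow H$ is compact for $\gamma>0$ (Remark after Hypothesis \ref{h1}); closedness of $K_{R,L}$ follows from the lower semicontinuity under uniform convergence of both $\|\cdot\|_{D((-A_1)^\gamma)}$ and the Hölder seminorm. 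Hence $S_1(\cdot)u_0+K_{R,L}$, the image of $K_{R,L}$ under the translation homeomorphism $f\mapsto S_1(\cdot)u_0+f$, is compact in $C([0,T];H)$.

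Finally, given $\eta>0$, I would apply Chebyshev's inequality: observing that $\mathbb{E}^\e\big(\sup_{s\neq t}\|y^\e(t)-y^\e(s)\|\,|t-s|^{-\beta/2}\big)^2=\mathbb{E}^\e\sup_{s\neq t}\|y^\e(t)-y^\e(s)\|^2|t-s|^{-\beta}\leq C_2$, I would choose $R$ with $C_1/R^2<\eta/2$ and $L$ with $C_2/L^2<\eta/2$, so that
\[
\mathbb{P}^\e\big(u^\e\notin S_1(\cdot)u_0+K_{R,L}\big)=\mathbb{P}^\e\big(y^\e\notin K_{R,L}\big)\leq \frac{C_1}{R^2}+\frac{C_2}{L^2}<\eta
\]
for every $\e\in(0,1)$. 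Since $S_1(\cdot)u_0+K_{R,L}$ is compact in $C([0,T];H)$, this is exactly tightness of $\{\mathcal{L}(u^\e)\}_{\e\in(0,1)}$. I do not expect a genuine obstacle here: the analytic content is already contained in \eqref{sy1001}–\eqref{sy1002}, and the only points requiring a little care are the compact-embedding/Arzelà–Ascoli verification and the closedness of the sets $K_{R,L}$.
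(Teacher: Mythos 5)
Your proposal is correct and follows essentially the same route as the paper: decompose $u^\e=S_1(\cdot)u_0+y^\e$, use the uniform bounds \eqref{sy1001}--\eqref{sy1002} together with Chebyshev's inequality to confine $y^\e$, up to probability $\eta$, to a set bounded in $C([0,T];D((-A_1)^\gamma))$ and H\"older-equicontinuous in $H$, and then translate by the fixed path $S_1(\cdot)u_0$. The only difference is cosmetic: you spell out the Arzel\`a--Ascoli/compact-embedding and closedness verification that the paper asserts without proof, and you use two separate thresholds $R,L$ and the $\beta/2$-H\"older seminorm where the paper uses a single radius with the squared increment.
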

\begin{proof}
Let $\gamma$ and $\beta$ be the same constants as in \eqref{sy1001} and \eqref{sy1002}. If we fix an arbitrary $R>0$ and introduce the compact set of $C([0,T];H)$ 
\[
K_R:= \left\{ u \in C([0,T];H)\, :\, \Vert u \Vert_{C^{\gamma}([0,T];H)}+\sup_{t,s\in [0,T]} \frac{\|u(t)- u(s)\Vert^2}{|t-s|^{\beta}} \leq R \right\},
\]
thanks to \eqref{sy1001} and \eqref{sy1002} we have
\[
\sup_{\e \in (0,1)}\mathbb{P}^{\e}\left(y^{\e}\in K_R^c\right) \leq \frac{c_T}R.
\]
Therefore, for every $\eta>0$ we can fix $R_\eta>0$ such 
\[\sup_{\e \in (0,1)}\mathbb{P}^{\e}\left(u^{\e}\in S_1(\cdot)u_0+K_{R_\eta}^c\right)=
\sup_{\e \in (0,1)}\mathbb{P}^{\e}\left(y^{\e}\in K_{R_\eta}^c\right) \leq \eta.
\]
Since $K_{R_\eta}+S_1(\cdot)u_0$ is also compact in $C([0,T];H)$, due to the arbitrariness of $\eta>0$ we conclude that
the family of measures $\{ \mathcal{L}(u^{\e})\}_{\e \in (0,1)}$ is tight on $C([0,T];H)$. 
\end{proof}

\section{ The averaging limit}
\label{sec5}
In this section we conclude the proof of Theorem \ref{main} by proving that any weak limit of $\mathcal{L}(u^\e)$ is a martingale solution of problem \eqref{sy7000}.

\subsection{A time-discretization of the fast motion}\label{sec aux}
For every $\d>0$ and $k= 1,2,...,[T/\d]$, we have that ${v}^{\e}$  satisfies the following equation
\[
\begin{aligned}
{v}^{\e}(t)=&\ S_2\left((t-k\d)/\e\right) {v}^{\e}(k\d) + \frac{1}{\e}\int_{k\d}^t S_2\left((t-s)/\e\right)  F_2({u}^{\e}(s),{v}^{\e}(s))  ds\\[10pt]
&\ \ \ \ \ \ \ \ \ \ \ \   + \frac{1}{\sqrt{\e}} \int_{k \d}^{t}  S_2\left((t-s)/\e\right) Q_2 d {W}^{2,\e}_s,\ \ \ t\in [k\d,(k+1)\d).
\end{aligned}
\]
Now, we denote by ${v}^{\e,\d}$ the solution to the following equation
\begin{equation}\label{eq4.4}
\begin{aligned}
{v}^{\e,\d}(t)=& S_2\left((t-k\d)/\e\right) {v}^{\e}(k\d) + \frac{1}{\e}\int_{k\d}^t S_2\left((t-s)/\e\right)  F_2({u}^{\e}(k\d),{v}^{\e,\d}(s))  ds\\[10pt]
&\ \ \ \ \ \ \ \ \ \ \ \   + \frac{1}{\sqrt{\e}} \int_{k \d}^{t}S_2\left((t-s)/\e\right)  Q_2 d{W}^{2,\e}_s,\ \ \ \ \ \ \ \ \ t\in [k\d,(k+1)\d).
\end{aligned}
\end{equation}
Moreover, we denote by ${u}^{\e,\d}$ the process defined by
\begin{equation}
{u}^{\e,\d}(t) = \sum_{k =0}^{[T/\d]} {u}^{\e}(k\d) \mathbbm{1}_{\{k\d\leq t< (k+1)\d\}},\ \ \ \ t \in [0,T].
\end{equation}
\begin{Lemma}\label{l4.2}
Assume Hypotheses \ref{h1} to \ref{h4} and fix $(u_0,v_0) \in D_V$. Then, if $\rho_0$ is the function defined in \eqref{sy6} (when $\gamma=0$), for any $k \leq [T/\d]$ and $t \in [k\d,(k+1)\d)$ we have
\begin{equation}
\label{sy15}
\sup_{\e \in (0,1)}{\mathbb{E}^{\e}}\,\|{u}^{\e}(t)- {u}^{\e,\d}(t)\|^{2}\leq c\,\rho_{0}(t,k\d) \left(1+\Vert u_0\Vert^2 +V(u_0,v_0)\right),
\end{equation}
and
\begin{equation}\label{eq4.8}
\begin{aligned}
{\mathbb{E}^{\e}}\,\|{v}^{\e,\d}(t)- {v}^{\e}(t)\|^2\leq \frac {c}{\e}\left(1+\Vert u_0\Vert^2 +V(u_0,v_0)\right)\exp\left(\frac{c\,\d}\e\right)  \int_{k\d}^t \rho_0(s,k\d) ds,\ \ \ \ \e \in (0,1).
\end{aligned}
\end{equation}
\end{Lemma}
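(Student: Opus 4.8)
The plan is to prove the two estimates separately, both by Gronwall-type arguments combined with the a-priori bounds already established. For \eqref{sy15}, I would simply observe that on the interval $[k\d,(k+1)\d)$ we have ${u}^{\e,\d}(t)={u}^\e(k\d)$, so that $\|{u}^\e(t)-{u}^{\e,\d}(t)\|^2=\|{u}^\e(t)-{u}^\e(k\d)\|^2$, and this is exactly the quantity controlled by \eqref{l1.1eq2} in Lemma \ref{l1.1} with $\gamma=0$, $s=k\d$. Indeed \eqref{l1.1eq2} gives $\sup_\e\E^\e\|{u}^\e(t)-{u}^\e(k\d)\|^2\le c\,\rho_0(k\d,t)(1+\|u_0\|^2+V(u_0,v_0))$, which is precisely \eqref{sy15} (noting $\rho_0$ is symmetric in its arguments up to the convention used). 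So the first estimate is essentially immediate from the already-proven regularity of the slow motion.

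For \eqref{eq4.8}, the idea is to set $\Lambda^{\e,\d}(t):={v}^{\e,\d}(t)-{v}^\e(t)$ and subtract the mild formulations. Because both ${v}^\e$ and ${v}^{\e,\d}$ are driven by the same noise and have the same value ${v}^\e(k\d)$ at the left endpoint $t=k\d$, the stochastic convolution terms and the semigroup terms cancel, leaving
\[
\Lambda^{\e,\d}(t)=\frac1\e\int_{k\d}^t S_2\big((t-s)/\e\big)\big[F_2({u}^{\e,\d}(s),{v}^{\e,\d}(s))-F_2({u}^\e(s),{v}^\e(s))\big]\,ds.
\]
Writing $\Gamma^{\e,\d}:=\Lambda^{\e,\d}$ (the non-convolution part is zero here so $\Lambda^{\e,\d}$ itself solves an ODE in $H$), one differentiates $\|\Lambda^{\e,\d}(t)\|^2$, uses the dissipativity $\langle A_2 h,h\rangle\le-\a_{2,1}\|h\|^2$ together with the Lipschitz bound for $F_2$ in its second argument (with constant $L_2$), and splits the difference $F_2({u}^{\e,\d},{v}^{\e,\d})-F_2({u}^\e,{v}^\e)$ as $[F_2({u}^{\e,\d},{v}^{\e,\d})-F_2({u}^{\e,\d},{v}^\e)]+[F_2({u}^{\e,\d},{v}^\e)-F_2({u}^\e,{v}^\e)]$. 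The first bracket contributes $L_2\|\Lambda^{\e,\d}\|$ and combines with the $-\a_{2,1}$ term to produce the favorable $-\omega/\e$ coefficient (times $\|\Lambda^{\e,\d}\|^2$); the second bracket contributes $[F_2]_{\text{\tiny Lip}}\|{u}^{\e,\d}(s)-{u}^\e(s)\|$. Young's inequality then yields
\[
\frac{d}{dt}\|\Lambda^{\e,\d}(t)\|^2\le \frac{c}{\e}\|\Lambda^{\e,\d}(t)\|^2+\frac{c}{\e}\|{u}^{\e,\d}(t)-{u}^\e(t)\|^2.
\]
Applying Gronwall on $[k\d,t]\subset[k\d,(k+1)\d)$ (where $t-k\d<\d$), taking expectations, and inserting \eqref{sy15} for the forcing term gives exactly \eqref{eq4.8}, with the factor $\exp(c\d/\e)$ coming from the Gronwall constant.

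I do not expect a serious obstacle here: both bounds reduce to standard dissipativity/Gronwall manipulations once the right cancellations are identified. The one point requiring a little care is that differentiating $\|\Lambda^{\e,\d}(t)\|^2$ is only formally justified (the mild solution need not be in $D(A_2)$), so strictly speaking one should either run the estimate on Yosida approximations of $A_2$ and pass to the limit, or work directly with the mild formula and the smoothing of $S_2$ — this is the same routine device already used in the proof of Lemma \ref{l1.3}, and I would simply invoke that argument. A second minor point is keeping track of the dependence on $\e$: every occurrence of $1/\e$ is retained honestly, which is why the bound \eqref{eq4.8} blows up as $\e\to0$ for fixed $\d$; this is acceptable because in the later application $\d$ will be sent to zero at a rate tied to $\e$.
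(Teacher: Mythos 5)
Your proof is correct, and for \eqref{sy15} it is exactly the paper's argument: since $u^{\e,\d}(t)=u^{\e}(k\d)$ on $[k\d,(k+1)\d)$, the bound is just \eqref{l1.1eq2} of Lemma \ref{l1.1} with $\gamma=0$ and $s=k\d$. For \eqref{eq4.8} your route differs only in the technical device. You subtract the mild formulations (the free term and the stochastic convolution cancel, exactly as in the paper), but then estimate the difference by differentiating $\|v^{\e,\d}(t)-v^{\e}(t)\|^2$, using the dissipativity of $A_2$, the Lipschitz splitting of $F_2$, Young's inequality and the differential Gronwall lemma. The paper instead stays with the mild formula: it bounds the integrand through $\|S_2((t-s)/\e)\|_{\mathcal{L}(H)}\le e^{-\a_{2,1}(t-s)/\e}$ and Cauchy--Schwarz, so that $\e^{-2}\int_{k\d}^t e^{-2\a_{2,1}(t-s)/\e}\,ds\le c/\e$, and then applies Gronwall in integral form. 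Both arguments lead to the same inequality $\mathbb{E}^{\e}\|v^{\e,\d}(t)-v^{\e}(t)\|^2\le \frac{c}{\e}\left(1+\|u_0\|^2+V(u_0,v_0)\right)\int_{k\d}^t\rho_0(s,k\d)\,ds+\frac{c}{\e}\int_{k\d}^t\mathbb{E}^{\e}\|v^{\e,\d}(s)-v^{\e}(s)\|^2\,ds$, hence to the same conclusion with the $\exp(c\,\d/\e)$ factor. What the paper's version buys is precisely the point you flag yourself: working with the mild formula avoids differentiating the squared norm, so no Yosida or smoothing approximation has to be justified; your energy argument is equally valid provided that regularization step is actually carried out (the difference solves a pathwise equation with no noise, so the routine device you cite from Lemma \ref{l1.3} indeed applies). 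There is no gap otherwise.
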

\begin{proof}
In order to prove \eqref{sy15}, we notice that as a consequence of Lemma \ref{l1.1} 
\begin{equation}\label{eq4.6}
\begin{aligned}
{\mathbb{E}}^{\e}\,\|{u}^{\e}(t)- {u}^{\e,\d}(t)\|^{2} = {\mathbb{E}}^{\e}\,\|{u}^{\e}(t)- {u}^{\e}(k\d)\|^{2} \leq c  \rho_{0}(t,k\d)\left(1+\Vert u_0\Vert^2 +V(u_0,v_0)\right).
\end{aligned}
\end{equation}
In order to prove \eqref{eq4.8}, we notice that for $t \in [k\d, (k+1)\d)$ we have 
\begin{equation}
\begin{array}{l}
\ds{{\mathbb{E}}^{\e}\,\|{v}^{\e,\d}(t)- {v}^{\e}(t)\|^2}\\[14pt]
\ds{\leq  \frac{1}{\e^2} {\mathbb{E}}^{\e}\left(\int_{k\d}^{t}\left\|S_2\left((t-s)/\e\right) \left( F_2({u}^{\e}(k\d),v^{\e,\d}(s))-  F_2({u}^{\e}(s),{v}^{\e}(s))\right) \right\|ds\right)^2}\\[18pt]
\ds{\leq  \frac{c}{\e^2}\int_{k\d}^{t}e^{-\frac{2\a_{2,0}}{\e}(t-s)}ds \left(\int_{k\d}^t  {\mathbb{E}}^{\e}\,\|{u}^{\e}(s)- {u}^{\e}(k\d)\|^{2} ds +  \int_{k\d}^t  {\mathbb{E}}^{\e}\,\|{v}^{\e,\d}(s)- {v}^{\e}(s)\|^{2}ds\right).}
\end{array}
\end{equation}
Hence, thanks to \eqref{eq4.6}, we get
\begin{equation}
\begin{aligned}
{\mathbb{E}}^{\e}\,\|{v}^{\e,\d}(t)- {v}^{\e}(t)\|^2\leq& \,\frac{c}{\e}   \int_{k\d}^t \rho_0(s,k\d) ds + \frac c{\e}  \int_{k\d}^t  {\mathbb{E}}^{\e}\,\|{v}^{\e,\d}(s)- {v}^{\e}(s)\|^{2} ds,
\end{aligned}
\end{equation}
and by Gronwall's inequality, this implies
\begin{equation}\label{eq 4.6.1}
\begin{aligned}
{\mathbb{E}}^{\e}\,\|{v}^{\e,\d}(t)- {v}^{\e}(t)\|^2 \leq \frac {c}{\e}\left(1+\Vert u_0\Vert^2 +V(u_0,v_0)\right)\exp\left(\frac{c\,\d}\e\right) \int_{k\d}^t \rho_0(s,k\d) ds.
\end{aligned}
\end{equation}
\end{proof}

\subsection{Some preliminary results}
\label{ssec5.2}

For any $\xi \in\ C([0,T];H)$, we define 
\[\begin{aligned}
G_\xi(t,x,y): = \langle  F_1(t,x,y),  \xi(t)\rangle,\ \ \ \ t \in\,[0,T],\ \ \ \ (x,y) \in\,D(F_1),
\end{aligned}
\]
and for every $\theta \in\,(0,1)$ we define
\[
\begin{aligned}
G_{\xi}^{\theta}(t,x,y): = \langle  F^{\theta}_1(t,x,y),   \xi(t)\rangle\ \ \ \ t \in\,[0,T],\ \ \ \ (x,y) \in\,\mathcal{H}_2.
\end{aligned}
\]
The mapping $G_\xi:[0,T]\times \mathcal{H}_2\to H$ is measurable, and, due to Condition 1. in Hypothesis \ref{h2}, it is easy to check that   the mapping $G^\theta_\xi:[0,T]\times \mathcal{H}_2\to H$ is continuous and bounded. Moreover,
as a consequence of \eqref{sy33}, we have 
\begin{equation}  \label{sy73}
|G_\xi(t,x,y)- G_{\xi}^\theta(t,x,y)| \leq \theta \sup_{t \in\,[0.T]}\| \xi(t)\|\, V(x,y),\ \ \ \ t \in\,[0,T],\ \ \ \ (x,y) \in\,D(F_1).
\end{equation}

Next we define  $\bar{G}_\xi$ and $\bar{G}_{\xi}^\theta$ by setting
\[
\bar{G}_\xi(t,x): =  \langle \bar{F}_1(t,x),   \xi(t)\rangle,\ \ \ \ \ (t,x) \in\,[0,T]\times \Pi_1 D(F_1),
\]
and
\[
\bar{G}_{\xi}^{\theta}(t,x): = \langle \bar{F}^{\theta}_1(t,x),   \xi(t)\rangle,\ \ \ \ \ (t,x) \in\,[0,T]\times H.
\]
Thanks to \eqref{sy26}, we have
\begin{equation}\label{eqG1}
|\bar{G}_\xi(t,x)-\bar{G}_{\xi}^{\theta}(t,x)|^2 \leq \theta\, \sup_{t \in\,[0,T]}\| \xi(t)\|^2\,\bar{V}(x),\ \ \ \ \ (t,x) \in\,[0,T]\times \Pi_1D(F_1).
\end{equation}
According to Lemma \ref{lemma 5.2}, we have that the mapping $\bar{G}_\xi:[0,T]\times D\to \mathbb{R}$ is measurable and for every $\theta \in\,(0,1)$ the mapping $\bar{G}^\theta_\xi:[0,T]\times H\to\mathbb{R}$  is continuous.

As we already mentioned above, in \cite{JLPL} it is shown that the space $\text{Lip}_b([0,T]\times H)$ is dense in 
$C_b([0,T]\times H)$. Then, we can fix a sequence $\{G^\theta_{\xi, n}\}_{n \in\,\mathbb{N}}\subset \text{Lip}_b([0,T]\times H)$ such that
\begin{equation}
\label{sy27-bis}
\lim_{n\to\infty}\,\sup_{(t,x,y) \in\,[0,T]\times \mathcal{H}_2}\vert G^\theta_{\xi,n}(t,x,y)-G^\theta_\xi(t,x,y) \vert =0.
\end{equation}
Furthermore, if we define
\[
\bar{G}^\theta_{\xi ,n}(t,x): = \int_H G^\theta_{\xi, n}(t,x,y) \mu^x(dy),\ \ \ \ \ t \in\,[0,T],\ \ \ x\in\,H, 
\]
by using arguments analogous to those used in Lemma \ref{lemma5.1} for $\bar{F}^\theta_n$, we can show that each mapping 
$\bar{G}^\theta_{\xi,n}:[0,T] \times H \to \mathbb{R}$ is  Lipschitz-continuous. Moreover, 
\begin{equation}
\label{sy34-bis}
\lim_{n\to\infty} \sup_{(t,x) \in\,[0,T]\times H}\vert \bar{G}^\theta_{\xi,n}(t,x)	-\bar{G}^\theta_\xi(t,x)\vert=0.
\end{equation}

\begin{Lemma}\label{l aux2}
Under Hypotheses \ref{h1} to \ref{h5},  we have
\begin{equation}\label{eq aux2.1}
\lim_{\e\to 0}\, {\mathbb{E}}^\e\sup_{t \in [0,T]}\left|\int_0^{t} \langle  F^1(s, u^\e(s), v^\e(s)) , \xi(s) \rangle ds- \int_0^{t} \langle  \bar{F}^1(s, {u}^\e(s)) , \xi(s) \rangle ds \right|=0.
\end{equation}
\end{Lemma}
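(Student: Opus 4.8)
I would prove \eqref{eq aux2.1} by a three-step approximation, only the last step carrying the genuine averaging. Write $\|\xi\|_\infty:=\sup_{t\in[0,T]}\|\xi(t)\|$ and use the notation $G_\xi,G^\theta_\xi,\bar{G}_\xi,\bar{G}^\theta_\xi$ of Section~\ref{ssec5.2}. \emph{Step~1: removing the roughness of $F_1$.} By \eqref{sy73} and the uniform bound \eqref{eq h4},
\[
\mathbb{E}^\e\sup_{t\in[0,T]}\Big|\int_0^t\big(G_\xi-G^\theta_\xi\big)(s,X^\e(s))\,ds\Big|\le\theta\,\|\xi\|_\infty\int_0^T\mathbb{E}^\e V(X^\e(s))\,ds\le\theta\,\|\xi\|_\infty\,M\,V(u_0,v_0),
\]
and by \eqref{eqG1}, H\"older's inequality and \eqref{sy20},
\[
\mathbb{E}^\e\sup_{t\in[0,T]}\Big|\int_0^t\big(\bar{G}_\xi-\bar{G}^\theta_\xi\big)(s,u^\e(s))\,ds\Big|\le\sqrt\theta\,\|\xi\|_\infty\,\sqrt T\,\Big(\sup_{\e\in(0,1)}\int_0^T\mathbb{E}^\e\bar{V}(u^\e(s))\,ds\Big)^{1/2}.
\]
Both bounds are $O(\sqrt\theta)$, uniformly in $\e$, so it suffices to prove, for each fixed $\theta\in(0,1)$, that $\mathbb{E}^\e\sup_t\big|\int_0^t(G^\theta_\xi(s,X^\e(s))-\bar{G}^\theta_\xi(s,u^\e(s)))\,ds\big|\to0$. \emph{Step~2: Lipschitz mollification.} For fixed $\theta$, \eqref{sy27-bis} together with $\sup_t\big|\int_0^tf(s)\,ds\big|\le T\sup_{[0,T]}|f|$ bounds the error of replacing $G^\theta_\xi$ by $G^\theta_{\xi,n}\in\text{Lip}_b([0,T]\times\mathcal{H}_2)$, uniformly in $\e$, and \eqref{sy34-bis} does the same for $\bar{G}^\theta_\xi$ versus $\bar{G}^\theta_{\xi,n}$. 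Hence, writing $g:=G^\theta_{\xi,n}$ (bounded and Lipschitz by Lemma~\ref{lemma5.1}) and $\bar{g}:=\bar{G}^\theta_{\xi,n}$ (bounded by \eqref{sy53}), one is reduced to proving, for each fixed $\theta$ and $n$,
\begin{equation}\label{plan-core}
\lim_{\e\to0}\mathbb{E}^\e\sup_{t\in[0,T]}\Big|\int_0^t\big(g(s,u^\e(s),v^\e(s))-\bar{g}(s,u^\e(s))\big)\,ds\Big|=0.
\end{equation}

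For \eqref{plan-core} I would run the Khasminskii time-discretization of Section~\ref{sec aux}. Fix a mesh $\delta=\delta(\e)$ and, on each $[k\delta,(k+1)\delta)$, replace $u^\e(s)$ by $u^\e(k\delta)$ and $v^\e(s)$ by the frozen-slow process $v^{\e,\delta}(s)$ of \eqref{eq4.4}. Since $g$ is Lipschitz, the resulting error is controlled by $\int_0^T(\mathbb{E}^\e\|u^\e(s)-u^{\e,\delta}(s)\|^2)^{1/2}ds+\int_0^T(\mathbb{E}^\e\|v^\e(s)-v^{\e,\delta}(s)\|^2)^{1/2}ds$: the first term is $o(1)$ as $\delta\to0$ with no constraint on $\e$, by \eqref{sy15} on the intervals with $k\ge1$ and by \eqref{l1.1eq3} (using $u_0\in D((-A_1)^\beta)$) on the first interval, while summing \eqref{eq4.8} over the $\lfloor T/\delta\rfloor$ subintervals bounds the second by $c\,\e^{-1/2}\exp(c\delta/(2\e))\,\delta^{\lambda}$ for some $\lambda>\tfrac12$. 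For the frozen integrand, I rescale time on $[k\delta,(k+1)\delta)$ by $s=k\delta+\e r$: by Brownian scaling and \eqref{eq4.4}, conditionally on $\mathcal{F}_{k\delta}$ the process $r\mapsto v^{\e,\delta}(k\delta+\e r)$ is a copy of the frozen-slow fast motion $v^{u^\e(k\delta),v^\e(k\delta)}(r)$, whence
\[
\mathbb{E}\Big[\int_{k\delta}^{(k+1)\delta}g(s,u^\e(k\delta),v^{\e,\delta}(s))\,ds\,\Big|\,\mathcal{F}_{k\delta}\Big]=\e\int_0^{\delta/\e}P^{u^\e(k\delta)}_r\big[g(k\delta,u^\e(k\delta),\cdot)\big](v^\e(k\delta))\,dr+O(\delta^2),
\]
which by the ergodic estimate \eqref{sy30} differs from $\int_{k\delta}^{(k+1)\delta}\bar{g}(s,u^\e(k\delta))\,ds$ by at most $c\,\e\,(1+\|u^\e(k\delta)\|+\|v^\e(k\delta)\|)+c\,\delta^2$. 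Summing over $k$, taking $\mathbb{E}^\e$ and using \eqref{sy1000} and Lemma~\ref{l1.3}, the conditional-mean part contributes $O(\e/\delta+\delta)$; the remaining martingale part, together with the outer $\sup_t$, is controlled by Doob's inequality and contributes $O(\sqrt\delta)$ since $g$ is bounded.

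The main obstacle is the choice of $\delta=\delta(\e)$: the averaging forces $\e/\delta\to0$ (each subinterval must last, on the fast scale $s/\e$, long enough for the fast motion to equilibrate), whereas replacing $v^\e$ by $v^{\e,\delta}$ brings in the factor $\exp(c\delta/\e)$ of \eqref{eq4.8}, which blows up once $\delta/\e\to\infty$. One resolves this by letting $\delta/\e$ diverge only logarithmically, e.g. $\delta(\e)=\eta_0\,\e\log(1/\e)$ with $\eta_0>0$ so small that $\lambda-\tfrac12-\tfrac{c\eta_0}{2}>0$: then $\e^{-1/2}\exp(c\delta/(2\e))=\e^{-1/2-c\eta_0/2}$ is overpowered by $\delta^{\lambda}\sim\e^{\lambda}(\log(1/\e))^{\lambda}$, so every error term above tends to $0$ as $\e\to0$. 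Keeping the constant $c$ in $\exp(c\delta/\e)$ under control --- it comes from the dependence of $F_2(u^\e(s),\cdot)$ on the slowly varying slow motion $u^\e$ --- is precisely where the localization-in-time argument announced in the introduction is needed, possibly combined with a preliminary localization of the $\mathcal{H}_2$-norm of $(u^\e,v^\e)$ by a stopping time that is removed at the end. Once \eqref{plan-core} is proved for fixed $\theta$ and $n$, letting $n\to\infty$ and then $\theta\to0$ in Steps~1--2 yields \eqref{eq aux2.1}; the whole difficulty is concentrated in this balancing of the discretization scale against the ergodic rate.
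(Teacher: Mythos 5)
Your proposal is correct and follows essentially the same route as the paper: reduction via the $V$-bounds \eqref{sy73}, \eqref{eqG1} together with \eqref{eq h4} and \eqref{sy20}, Lipschitz mollification via \eqref{sy27-bis}--\eqref{sy34-bis}, the Khasminskii discretization of Section \ref{sec aux} with the estimates of Lemmas \ref{l1.1} and \ref{l4.2}, the ergodic bound \eqref{sy30} for the frozen fast motion (which the paper handles by citing \cite{SC1} and \cite{SCMF1}), and finally the same logarithmic balancing $\delta(\e)\sim\e$ times a logarithmic factor to beat the $\exp(c\delta/\e)$ term while keeping $\e/\delta\to 0$. The only differences are cosmetic: you spell out the conditional rescaling argument that the paper outsources to its references, and your choice $\delta=\eta_0\,\e\log(1/\e)$ with $\eta_0$ small plays the same role as the paper's $\delta_\e=\tfrac{2}{c}\,\e|\ln\e|^{\lambda/2}$.
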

\begin{proof}
By using the notations we have just introduced, \eqref{eq aux2.1} can be rewritten as
\begin{equation}\label{eq4.2}
\lim_{m \to \infty} {\mathbb{E}}^{\e} \sup_{t \in [0,T]} \left| \int_0^t \left[G_\xi(s, {u}^{\e}(s), {v}^{\e}(s))- \bar{G}_\xi(s, {u}^{\e}(s))\right] ds\right|=0.
\end{equation}
For every $\theta \in\,(0,1)$ and $n \in\,\mathbb{N}$, we have
\begin{equation}\label{eq4.3}
\begin{aligned}
 & {\mathbb{E}}^{\e}\sup_{t \in [0,T]}\left| \int_0^t \left[G_{\xi}(s, {u}^{\e}(s), {v}^{\e}(s)) - \bar{G}_{\xi}(s, {u}^{\e}(s))\right]ds\right|\\[10pt]
& \leq  {\mathbb{E}}^\e \sup_{t \in [0,T]}\left|\int_0^t \left[G_\xi(s, {u}^{\e}(s), {v}^{\e}(s))-G^\theta_{\xi,n}(s, {u}^{\e}(s), {v}^{\e}(s))\right] ds\right| \\[10pt]
& \ \ \ \ \ \ \ \ \ \ \ +  {\mathbb{E}}^{\e}\sup_{t \in [0,T]}\left| \int_0^t \left[G^{\theta}_{\xi,n}(s, {u}^{\e}(s), {v}^{\e}(s))-\bar{G}^\theta_{\xi,n}(s, {{u}}^{\e}(s))\right]ds\right| \\[10pt]
& \ \ \ \ \ \ \ \ \ \ \ +  {\mathbb{E}}^\e \sup_{t \in [0,T]}\left| \int_0^t \left[\bar{G}^\theta_{\xi,n}(s, {u}^{\e}(s)) - \bar{G}_\xi(s, {u}^{\e}(s))\right] ds\right|=:\sum_{k=1}^3 I_{\e, k}(\theta, n).
\end{aligned}
\end{equation}

For the term $I_{\e, 1}(\theta, n)$, due to \eqref{sy73}, we have
\begin{equation}
\begin{array}{l}
\ds{I_{\e, 1}(\theta, n) \leq \int_0^T  {\mathbb{E}}^{\e} \left|G_\xi(s, {u}^{\e}(s), {v}^{\e}(s))-G_{\xi}^{\theta}(s, {u}^{\e}(s), {v}^{\e}(s))\right|ds}\\[14pt]
 \ds{+\int_0^T  {\mathbb{E}}^{\e} \left|G_{\xi}^{\theta}(s, {u}^{\e}(s), {v}^{\e}(s))-G^\theta_{\xi,n}(s, {u}^{\e}(s), {v}^{\e}(s))\right| ds}\\[14pt]
\ds{\leq\  \theta \int_0^T {\mathbb{E}}^{\e} \,V({u}^{\e}(s), {v}^{\e}(s))\, ds+\int_0^T {\mathbb{E}}^\e \left| G_{\xi}^{\theta}(s, {u}^{\e}(s), {v}^{\e}(s))-G^\theta_{\xi,n}(s, {u}^{\e}(s), {v}^{\e}(s))\right|ds.}
\end{array}
\end{equation}
In view of Hypothesis \ref{h4}, this gives
\begin{equation}
	\label{sy80}
\begin{array}{ll}
\ds{	I_{\e, 1}(\theta, n) \leq } & \ds{c_T(u_0,v_0)\,\theta+c_T\,\sup_{(t,x,y) \in\,[0,T]\times \mathcal{H}_2}\vert G^\theta_{\xi}(t,x,y)-G^\theta_{\xi,n}(t,x,y)\vert_.}
\end{array}
	\end{equation}

For the term $I_{\e, 3}(\theta, n)$, according to \eqref{eqG1} we have
\[
\begin{aligned}
I_{\e, 3}(\theta, n) \leq& \int_0^T {\mathbb{E}}^\e\left| \bar{G}_{\xi}(s, {u}^{\e}(s))-\bar{G}_{\xi}^\theta(s, {u}^{\e}(s))\right|ds+ \int_0^T {\mathbb{E}}^\e\left|\bar{G}_{\xi}^\theta(s, {u}^{\e}(s))-\bar{G}^\theta_{\xi, n}(s, {u}^{\e}(s))\right| ds \\[14pt]
\leq&\ \theta \int_0^T {\mathbb{E}}^\e\, \bar{V}({u}^{\e}(s))\,  ds+\int_0^T {\mathbb{E}}^\e\left| \bar{G}_{\xi}^{\theta}(s, {u}^{\e}(s))-\bar{G}^\theta_{\xi,n}(s, {u}^{\e}(s))\right|\,ds. \\
\end{aligned}
\]
Thanks to Hypothesis \ref{h5} and \eqref{sy20}, this implies
\[
\begin{aligned}
I_{\e, 3}(\theta, n)\leq c\, \theta +T\,\sup_{(t,x) \in\,[0,T]\times H}\vert\bar{G}_{\xi}^{\theta}(t,x)-\bar{G}^\theta_{\xi,n}(t,x)\vert.
\end{aligned}
\]
Due to \eqref{sy27-bis} and \eqref{sy34-bis}, the inequality above, together with \eqref{sy80}, implies that for every $\eta>0$ there exist $\theta_\eta \in\,(0,1)$ and $n_\eta \in\,\mathbb{N}$ such that
\begin{equation}
\label{sy81}
\sup_{\e \in\,(0,1)}\,\left(I_{\e, 1} (\theta_\eta, n_\eta)+I_{\e, 3} (\theta_\eta, n_\eta)\right)<\eta.	
\end{equation}

Finally, concerning $I_{\e, 2}(\theta_\eta,n_\eta)$,  if $u^{\e,\d}$ and $v^{\e,\d}$ are the auxiliary process introduced in Section \ref{sec aux}, we have
\begin{equation}
\begin{array}{ll}
\ds{I_{\e, 2}(\theta_\eta,n_\eta)\leq} & \ds{ \int_0^T {\mathbb{E}}^\e \left|G^{\theta_\eta}_{\xi, n_\eta}(s, {u}^{\e}(s), {v}^{\e}(s)) -G^{\theta_\eta}_{\xi,n_\eta}(s, {u}^{\e,\d}(t), {v}^{\e,\d}(t))\right|\,ds}\\[14pt]
&\ds{+\int_0^T {\mathbb{E}}^\e\left|\bar{G}^{\theta_\eta}_{\xi, n_\eta}(s, {u}^{\e}(s))-   \bar{G}^{\theta_\eta}_{\xi,n_\eta} (t, {u}^{\e,\d}(s)) \right| ds}\\[14pt]
&\ds{+ \int_0^T {\mathbb{E}}^\e \left| G^{\theta_\eta}_{\xi,n_\eta}(s, {u}^{\e,\d}(s), {v}^{\e,\d}(s)) - \bar{G}^{\theta_\eta}_{\xi,n_\eta} (s, {u}^{\e,\d}(s))\right| ds=:\sum_{k=1}^3 J_{\e, \eta, k}.} 
\end{array}
\end{equation}
Since both $G^{\theta_\eta}_{\xi,n_\eta}$ and $\bar{G}^{\theta_\eta}_{\xi,n_\eta}$ are Lipschitz continuous, by Lemma \ref{l4.2}, we have
\begin{equation}
\label{sy85}
\begin{array}{l}
\ds{J_{\e, \eta, 1}+J_{\e, \eta, 2}}\\[10pt]
\ds{\leq \left([G^{\theta_\eta}_{\xi,n_\eta}]_{\text{\tiny{Lip}}}+[\bar{G}^{\theta_\eta}_{\xi,n_\eta}]_{\text{\tiny{Lip}}}\right)\sum_{k=0}^{[T/\d]+1} \int_{k\d}^{(k+1)\d} {\mathbb{E}}^\e\left(\Vert {u}^{\e}(s)-{u}^{\e, \delta}(s)\Vert+\Vert {v}^{\e}(s)-{v}^{\e,\d}(s)\Vert\right)\,ds}\\[14pt]
\ds{ \leq c_{u,\eta}(u_0,v_0) \sum_{k=0}^{[T/\d]+1} \int_{k\d}^{(k+1)\d} \left( \rho_0(s,k\d)^{1/2} + \left(\,\frac 1{\e} \exp\left(\frac{c\,\d}{\e}\right)\int_{k\d}^s \rho_0(\zeta,k\d) d\zeta\right)^{1/2}\right) ds.}
\end{array}
\end{equation}
Moreover, we have
\begin{equation}
\begin{array}{l}
\ds{J_{\e, \eta, 3}= \sum_{k=0}^{[T/\d]+1}\int_{k\d}^{(k+1)\d} {\mathbb{E}}^\e \left| G^{\theta_\eta}_{\xi,n_\eta}(s, {u}^{\e}(k\d), {v}^{\e,\d}(s)) - \bar{G}^{\theta_\eta}_{\xi,n_\eta} (s, {u}^{\e}(k\d)) \right|ds} \\[18pt]
\ds{\leq\sum_{k=0}^{[T/\d]+1}\int_{k\d}^{(k+1)\d} {\mathbb{E}}^\e \left| G^{\theta_\eta}_{\xi,n_\eta}(s, {u}^{\e}(k\d), {v}^{\e,\d}(s)) - G^{\theta_\eta}_{\xi,n_\eta}(k\d, {u}^{\e}(k\d), {v}^{\e,\d}(s))\right| ds} \\[18pt]
\ds{+  \sum_{k=0}^{[T/\d]+1}\int_{k\d}^{(k+1)\d} {\mathbb{E}}^\e  \left|G^{\theta_\eta}_{\xi,n_\eta}(k\d , {u}^{\e}(k\d), {v}^{\e,\d}(s)) -\bar{G}^{\theta_\eta}_{\xi,n_\eta} (k\d, {u}^{\e}(k\d))\right| ds }\\[18pt]
\ds{+  \sum_{k=0}^{[T/\d]+1}\int_{k\d}^{(k+1)\d} {\mathbb{E}}^\e  \left|\bar{G}^{\theta_\eta}_{u,n_\eta} (k\d, {u}^{\e}(k\d))-\bar{G}^{\theta_\eta}_{u,n_\eta} (s, {u}^{\e}(k\d)) \right| ds =:\sum_{k=1}^3 K_{\e, \eta, k}.} 
\end{array}
\end{equation}
Thanks again to the Lipschitz continuity of $\bar{G}^{\theta_\eta}_{\xi,n_\eta}$ and $G^{\theta_\eta}_{\xi,n_\eta}$, there exists some constant $c_{\eta}>0$ such that
\begin{equation}
\label{sy86}
\begin{aligned}
K_{\e, \eta, 1}+K_{\e, \eta,3} \leq c_{u, \eta}  \sum_{k=0}^{[T/\d]+1} \int_{k\d}^{(k+1)\d}(s-k\d)\, ds \leq c_{\eta}\,T \d.
\end{aligned}
\end{equation}
Finally, by proceeding as in  \cite[Lemma 2.3]{SC1} and \cite[Theorem 3.5]{SCMF1}, as a consequence of \eqref{sy30} we have
\begin{equation}
\label{sy87}
\begin{aligned}
 K_{\e, \eta, 2}
= \sum_{k=0}^{[T/\d]+1}\int_{k\d}^{(k+1)\d} {\mathbb{E}}^\e  \left|G^{\theta_\eta}_{\xi,n_\eta}(k\d , {u}^{\e}(k\d), {v}^{\e,\d}(s)) -\bar{G}^{\theta_\eta}_{\xi,n_\eta} (k\d, {u}^{\e}(k\d))\right| ds
\leq  c_{\eta}\,\sqrt{\frac{\e}{\d}}.
\end{aligned}
\end{equation}

Therefore, in view of \eqref{eq4.3}, from \eqref{sy81},  \eqref{sy85}, \eqref{sy86} and \eqref{sy87}, we obtain
\begin{equation}
\begin{array}{l}
\ds{ {\mathbb{E}}^\e\sup_{t \in [0,T]}\left|\int_0^{T} \langle  F^1(s, u^\e(s), v^\e(s)) , \xi(s) \rangle ds- \int_0^{t} \langle  \bar{F}^1(s, {u}^{\e}(s)) , \xi(s) \rangle ds \right|}\\[14pt]
\ds{\leq  c_{\eta}(x_0) \sum_{k=0}^{[T/\d]+1} \int_{k\d}^{(k+1)\d} \left( \rho_0(t,k\d)^{1/2} + \left(\,\frac 1{\e} \exp\left(\frac{c\,\d}{\e}\right)\int_{k\d}^t \rho_0(t,k\d) ds\right)^{1/2}\right) dt }\\[14pt]
\ds{+I_{\e, 3}(\theta_\eta, n_\eta)+c_{ \eta}\,T \d+c_{\eta}\,\sqrt{\frac{\e}{\d}}+\eta.}
\end{array} 
\end{equation}
Under the assumption  that $u_0 \in D((-A_1)^{\beta})$, for some $\beta>0$, according to  \eqref{l1.1eq3}, there exists some $\lambda>0$ such that
\[
\rho_u(t,s) \leq c(x_0)(t-s) ^{\lambda},
\]
so that
\[
\begin{array}{l}
\ds{\sum_{k=0}^{[T/\d]+1} \int_{k\d}^{(k+1)\d} \left( \rho_0(t,k\d)^{1/2} + \left(\frac 1{\e} \exp\left(\frac {c\, \d}{\e}\right)\int_{k\d}^s \rho_u(\zeta,k\d) d\zeta\right)^{1/2}\right) ds}\\[18pt]
\ds{\leq c\sum_{k=0}^{[T/\d]+1} \int_{k\d}^{(k+1)\d} \left( \d^{\lambda/2} + \left(\frac 1{\e} \exp\left(\frac {c\, \d}{\e}\right)\d^{1+\lambda}\right)^{1/2}\right) ds }\\[18pt]
\ds{\leq c\, t \left( \d^{\lambda/2} + \frac 1{\e^{1/2}} \exp\left(\frac {c\, \d}{2\,\e}\right)\d^{(1+\lambda)/2}\right).}
\end{array}
\]
Therefore, we have
\begin{equation}
\label{sy92}
\begin{array}{l}
\ds{ {\mathbb{E}}^\e\sup_{t \in [0,T]}\left|\int_0^{t} \langle  F^1(s, u^\e(s), v^\e(s)) , \xi(s) \rangle ds- \int_0^{t} \langle  \bar{F}^1(s, {u}^{\e}(s)) , \xi(s) \rangle ds \right|}\\[14pt]
\ds{\leq c\, T \left( \d^{\lambda/2} + \frac 1{\e^{1/2}} \exp\left(\frac {c\, \d}{2\,\e}\right)\d^{(1+\lambda)/2}\right)+I_{\e, 3}(\theta_\eta, n_\eta)+c_{ \eta}\,T \d+c_{ \eta}\,\sqrt{\frac{\e}{\d}}+\eta.}
\end{array}\end{equation}

Now if we take
\[
\d_{\e}:= \frac{2}c\,\e |\ln \e|^{\lambda/2},
\] 
we have 
\begin{equation}
\label{sy90}	
\lim_{\e\to 0} \d_\e=0,\ \ \ \ \ 
\lim_{\e\to 0}\frac{\e}{\d_\e} = \frac c2\,\lim_{\e \to 0}|\ln \e|^{-\lambda/2}=0.
\end{equation}
Moreover, we have
\[
 \frac 1{\e^{1/2}} \exp\left(\frac {c\, \d_\e}{2\,\e}\right)\d_\e^{(1+\lambda)/2}= \e^{\eta}  |\ln \e|^{-\eta(1+ \eta)/2} \left(\frac{1}{\e}\right)^{\eta/2}=\e^{\eta/2}  |\ln \e|^{-\eta(1+ \eta)/2},
\]
so that
\begin{equation}
\label{sy91}	
\lim_{\e\to 0} \frac 1{\e^{1/2}} \exp\left(\frac {c\, \d_\e}{2\,\e}\right)\d_\e^{(1+\lambda)/2}=0.
\end{equation}
Hence, due to  \eqref{sy90} and \eqref{sy91}, from \eqref{sy92} we get
\[
\limsup_{\e\to 0} {\mathbb{E}}^\e\sup_{t \in [0,T]}\left|\int_0^{t} \langle  F^1(s, u^\e(s), v^\e(s)) , \xi(s) \rangle ds- \int_0^{t} \langle  \bar{F}^1(s, {u}^{\e}(s)) , \xi(s) \rangle ds \right|\leq \eta,
\]
and the arbitrariness of $\eta>0$ allows to obtain \eqref{eq aux2.1}. 
\end{proof}

\subsection{Conclusion}
Let $\{\e_m\}_{m \in\,\mathbb{N}}$ a sequence in $(0,1)$ converging to $0$ such that
\begin{equation}
	\label{fine1}
	\lim_{m\to\infty} \mathcal{L}(u^{\e_m})=\mu_{\text{\tiny{slow}}},\ \ \ \ \ \text{weakly in}\ C([0,T];H).
\end{equation}
In particular  the sequence 
$\left\{ (u^{\e_m},W^{\e_m})\right\}_{m \in\,\mathbb{N}}\subset C([0,T];H)\times C([0,T];\mathcal{D}^\prime)$ is tight. Therefore there exists a subsequence of $\{\e_m\}_{m \in\,\mathbb{N}}$ that we will still denote $\{\e_m\}_{m \in\,\mathbb{N}}$ and $\nu \in\,\mathcal{P}(C([0,T];H)\times C([0,T];\mathcal{D}^\prime))$ such that 
\[\lim_{m\to\infty} \mathcal{L}(u^{\e_m},W^{\e_m})=\nu,\ \ \ \ \ \text{weakly in}\ C([0,T];H)\times  C([0,T];\mathcal{D}^\prime).\]

Now, as a consequence of the Skorokhod theorem, we can fix a probability space $(\bar{\Omega}, \bar{\mathcal{F}}, \bar{\mathbb{P}})$, a sequence $\{(u^m, W^m)\}_{m \in\,\mathbb{N}}\subset C([0,T];H)\times C([0,T];\mathcal{D}^\prime)$ and  $(\bar{u}, \bar{W})\in  C([0,T];H)\times C([0,T];\mathcal{D}^\prime)$ such that
\begin{equation}
	\label{fine3}
	\mathcal{L}(u^m, W^m)=\mathcal{L}(u^{\e_m}, W^{\e_m}),\ \ \ \ \ \mathcal{L}(\bar{u}, \bar{W})=\nu,
\end{equation}
and
\begin{equation}
	\label{fine4}
	\lim_{m\to\infty} (u^m, W^m)=(\bar{u}, \bar{W}),\ \ \ \ \ \bar{\mathbb{P}}-\text{a.s.}\ \ \text{in}\ C([0,T];H)\times C([0,T];\mathcal{D}^\prime).
\end{equation}
Notice that, due to \eqref{fine1}, we have $\Pi_1\nu=\mu_{\text{\tiny{slow}}}$. Moreover, since all $W^\e$ are space-time white noises in $H$, we have that $\Pi_2 \nu$ is a space-time white noise. All this implies that 
$\mathcal{L}(\bar{u})=\mu_{\text{\tiny{slow}}}$ and $\bar{W}$ is a space-time white noise.

Next, for every $\e \in\,(0,1)$ and $\xi \in\,C^1([0,T];D(A_1^\star))$, we define 
\[R^\e_{\xi}(t):=\langle u^\e(t),\xi(t)\rangle -
\langle u_0,\xi(0)\rangle-\int_0^t \langle \bar{F}_1(s,u^\e(s)),\xi(s)\rangle\,ds-\int_0^t\langle Q_1 dW^\e(s),\xi(s)\rangle.\]
According to Lemma \ref{l aux2}, we have that 
\begin{equation}
\label{fine2}
\lim_{\e \to 0}\,\mathbb{E}^\e\sup_{t \in\,[0,T]} \vert R^\e_\xi(t)\vert=0.	
\end{equation}
Therefore, if we define
\begin{equation}
\label{fine5}	
R^m_{\xi}(t):=\langle u^m(t),\xi(t)\rangle -
\langle u_0,\xi(0)\rangle-\int_0^t \langle \bar{F}_1(s,u^m(s)),\xi(s)\rangle\,ds-\int_0^t\langle Q_1 dW^m(s),\xi(s)\rangle,
\end{equation}
due to \eqref{fine3} we have
\[\lim_{m\to\infty}\,\bar{\mathbb{E}}\sup_{t \in\,[0,T]} \vert R^m_\xi(t)\vert=0.	\]
This, together with \eqref{fine4}, implies that if we take the limit as $m\to \infty$ in both sides of \eqref{fine5}, 
and if we show that
\begin{equation}
\label{fine6}
\lim_{m\to\infty}\bar{\mathbb{E}}\sup_{t \in\,[0,T]}\left\vert \int_0^t \langle \bar{F}_1(s,u^m(s)),\xi(s)\rangle\,ds-\int_0^t \langle \bar{F}_1(s,\bar{u}(s)),\xi(s)\rangle\,ds\right\vert =0,	
\end{equation}
 we obtain that
$\bar{u}$ satisfies the equation
\[\langle \bar{u}(t),\xi(t)\rangle =
\langle u_0,\xi(0)\rangle+\int_0^t \langle \bar{F}_1(s,\bar{u}(s)),\xi(s)\rangle\,ds-\int_0^t\langle Q_1 d\bar{W}(s),\xi(s)\rangle.\]

By using the notations introduced in Subsection \ref{ssec5.2}, we can rewrite \eqref{fine6} as
\begin{equation}\label{fine7}
\lim_{m \to \infty} \bar{\mathbb{E}} \sup_{t \in [0,T]} \left| \int_0^t \left[\bar{G}_\xi(s, {u}^{m}(s))- \bar{G}_\xi(s, \bar{u}(s))\right] ds\right|=0.
\end{equation}
We have
\[\begin{array}{l}
\ds{\bar{\mathbb{E}} \sup_{t \in [0,T]} \left| \int_0^t \left[\bar{G}_\xi(s, {u}^{m}(s))- \bar{G}_\xi(s, \bar{u}(s))\right] ds\right|\leq \int_0^T\bar{\mathbb{E}}\left\vert \bar{G}_\xi(s, {u}^{m}(s))-\bar{G}^\theta_\xi(s, {u}^{m}(s))\right\vert\,ds}	\\[18pt]
\ds{+\int_0^T\bar{\mathbb{E}}\left\vert \bar{G}^\theta_\xi(s, {u}^{m}(s))-\bar{G}^\theta_\xi(s, \bar{u}(s))\right\vert\,ds+\int_0^T\bar{\mathbb{E}}\left\vert \bar{G}^\theta_\xi(s, \bar{u}(s))-\bar{G}_\xi(s, \bar{u}(s))\right\vert\,ds=:\sum_{k=1}^3 I^\theta_{m,k}.}
\end{array}\]
According to \eqref{eqG1}, we have
\[I_{m,1}^\theta+I_{m,3}^\theta\leq c_T\,\theta \int_0^T\left [\bar{\mathbb{E}}\bar{V}(u^m(t))+\bar{\mathbb{E}}\bar{V}(\bar{u}(t))\right]\,dt.\]
As a consequence of \eqref{fine3} and \eqref{sy20}, we have
\[\sup_{m \in\,\mathbb{N}}\int_0^T \bar{\mathbb{E}}\,\bar{V}(u^m(t))\,dt=\sup_{m \in\,\mathbb{N}}\int_0^T \mathbb{E}^{\e_m}\bar{V}(u^{\e_m}(t))\,dt<\infty.\]
Moreover, Fatou's lemma implies
\[\begin{array}{ll}
\ds{\int_0^T \bar{\mathbb{E}}\ \bar{V}(\bar{u}(t))\,  dt}  &  \ds{\leq \liminf_{R\to\infty}\int_0^T \bar{\mathbb{E}} \left(\bar{V}(\bar{u}(t))\wedge R\right)\, dt}\\[14pt]
&\ds{=\liminf_{R\to\infty}\lim_{m\to\infty} \int_0^T \bar{\mathbb{E}} \left(\bar{V}(\bar{u}(t))\wedge R\right)\, dt \leq \sup_{m  \in\,\mathbb{N}}\int_0^T \bar{\mathbb{E}}\, \bar{V}(u^{m}(t))\, dt<\infty.}
	\end{array}\]
Therefore, for every $\eta>0$, we can find $\theta_\eta>0$ such that
\[\sup_{m \in\,\mathbb{N}}\left(I_{m,1}^{\theta_\eta}+I_{m,3}^{\theta_\eta}\right)<\eta.\]
In particular, due to the arbitrariness of $\eta>0$, \eqref{fine7} follows once we show 
that
\[\lim_{m\to\infty}\int_0^T\bar{\mathbb{E}}\left\vert \bar{G}^{\theta_\eta}_\xi(s, {u}^{m}(s))-\bar{G}^{\theta_\eta}_\xi(s, \bar{u}(s))\right\vert\,ds=0.\]
But this is clearly a consequence of \eqref{fine4}, as $\bar{G}^{\theta_\eta}_\xi$ is bounded and continuous.

\section{An example}
\label{sec7}
In this section, we are going to discuss some examples of slow-fast systems of SPDEs which satisfy the conditions we have assumed in the previous sections and hence can be treated with our theory.

Let $\mathcal{O}\subset \mathbb{R}^d$ be a bounded open set, with a smooth boundary and let $H$ be the Hilbert space $L^2(\mathcal{O})$, endowed with the norm $\Vert\cdot\Vert$ and the scalar product $\langle\cdot,\cdot\rangle$. As in Section \ref{sec2}, we denote by $\mathcal{H}_2$ the product sopace $H\times H$.

\subsection{About Hypothesis \ref{h1}} 
We assume that for every $p \in\,(1,\infty)$ the operators $(A_{1, p},D(A_{1, p}))$ and $(A_{2, p},D(A_{2, p}))$ are the realizations in $L^p(\mathcal{O})$ of second order uniformly elliptic operators $\mathcal{A}_i$, $i=1, 2$, satisfying Hypothesis \ref{h1}. All these operators are consistent, in the sense that for every $p, r \in\,(0,\infty)$
\[A_{i, p} x=A_{i, r} x,\ \ \ \ x \in\,D(A_{i, p})\cap D(A_{i, r}),\ \ \ i=1, 2,\]
and for this reason,   whenever there is no room for confusion, for the sake of simplicity we shall denote all of them as $(A_{1},D(A_{1}))$ and $(A_{2},D(A_{2}))$.
In what follows, we will use the fact that for every $p \in\,(1,\infty)$ there exists $\la_p>0$ such that
\[\langle A_i x,x |x|^{p-2}\rangle \leq -\la_p\,\Vert(-A_i)^{1/2}(x|x|^{\frac{p-2}2})\Vert^2,\ \ \ \ x \in\,D(A_{i,p}),\ \ \ i=1,\,2.
\]
In particular, this implies that 
\begin{equation}
\label{sy600}
\langle A_i x,x |x|^{p-2}\rangle \leq -\a_{1, i}\,\la_p\,\Vert x\Vert_{L^p}^p,\ \ \ \ x \in\,D(A_{i,p}),\ \ i=1, 2,
\end{equation}
(for all details, we refer e.g. to \cite{lunardi}).

As far as the Gaussian perturbations in the slow and in the fast equations are concerned, we assume that the operators $Q_{1}$ and $Q_2$ satisfy Hypothesis \ref{h1}. Moreover, we assume that one of the following two conditions are satisfied:
\begin{enumerate}
\item[C1.] both $Q_1$ and $Q_2$ are invertible, with $Q_1^{-1}, Q_2^{-1} \in\,\mathcal{L}(H)$,
\item[C2.] both $Q_1$ and $Q_2$ are Hilbert-Schmidt operators.	
\end{enumerate}

Under reasonable conditions of the coefficients of the differential operators $\mathcal{A}_i$ and the domain $\mathcal{O}$, we have that 
\[\alpha_{i, k}\sim k^{\frac d2},\ \ \ \ k \in\,\mathbb{N}.\]
Hence, Condition C1. is satisfied only if $d=1$. In particular, if we want to treat the case of SPDEs of reaction-diffusion type in space dimension $d>1$, we can only consider Gaussian noise with trace-class covariance.

As we did in Section \ref{sec2}, for every $\e \in\,(0,1)$ we define
\begin{equation}
	\label{sy621}
	A^{\e}(x_1,x_2) = (A_1x_1, \e^{-1} A_2x_2), \ \ \ \ (x_1, x_2) \in\,D(A^\e):=D(A_1)\times D(A_2)\subset \mathcal{H}_2,
\end{equation}
and 
\begin{equation}
\label{sy622}	
Q^{\e}(x_1,x_2) = (Q_1x_1, \e^{-1/2} Q_2x_2),\ \ \ \ (x_1,x_2) \in \mathcal{H}_2.
\end{equation}
Notice that if Condition C1. holds, then 
$Q^\e $ is invertible with $(Q^\e)^{-1} \in\,\mathcal{L}(\mathcal{H}_2)$, for each $\e \in\,(0,1)$. In the same way, if Condition C2. holds, then $\text{Tr}\,[(Q^\e)^2]<\infty$, for each $\e \in\,(0,1)$.

\subsection{About Hypothesis \ref{h2}} We assume that $b:[0,T]\times\mathcal{O}\times \mathbb{R}^2\to\mathbb{R}$ is a continuous function satisfying the following conditions.
\begin{enumerate}
\item There exist  $c_1>0$ and $m_1, m_2 \geq 1$ and  a function $a_1 \in\,L^4(\mathcal{O})$ such that 
\begin{equation}
\label{sy101}
\sup_{t \in\,[0,T]}\,|b(t,\xi,	\si,\la)|\leq c_1\,\left(a_1(\xi)+|\si|^{m_1}+|\la|^{m_2}\right),
\end{equation}
for all $\xi \in\,\mathcal{O}$ and $\si, \la \in\,\mathbb{R}$.
\item There exist $c_2>0$, $\kappa_1\leq 2 m_2$ and $ \kappa_2 \geq 0$ and a function $a_2 \in\,L^1(\mathcal{O})$ such that
\begin{equation}	
\label{sy100}
\sup_{t \in\,[0,T]}b(t,\xi,\si+\rho,\la)\si\leq c_2\left(a_2(\xi)+|\si|^2+|\la|^{\kappa_1}+|\rho|^{\kappa_2}	\right),
\end{equation}
for all $\xi \in\,\mathcal{O}$ and $\si, \rho, \la \in\,\mathbb{R}$.

\end{enumerate}

Now, given any two functions $x, y:\mathcal{O}\to \mathbb{R}$, we define
\begin{equation}
\label{sy102}
F_1(t,x,y)(\xi)=b(t,\xi,x(\xi),y(\xi)),\ \ \ \ t \in\,[0,T],\ \ \xi \in\,\mathcal{O}.	
\end{equation}
According to \eqref{sy101}, 
if we define 
\[D(F_1):=L^{2m_1}(\mathcal{O})\times L^{2m_2}(\mathcal{O}),\]
 we have that $F_1:[0,T]\times D(F_1)\to H$ and
\[\Vert F_1(t,x,y)\Vert \leq c\left (1+\Vert x\Vert _{L^{2m_1}}^{m_1}+\Vert y\Vert_{L^{2m_2}}^{m_2}\right),\ \ \ \ (x,y) \in\, D(F_1).\]
Moreover, since we are assuming $\kappa_1\leq 2 m_2$, according to \eqref{sy100}, we have that
\begin{equation}
\label{sy103}
\ds{\langle F_1(t,x+z,y),x\rangle\leq c\left(1+\Vert x\Vert^2+\Vert y\Vert^{\kappa_1}_{L^{\kappa_1}}+\Vert z\Vert^{\kappa_2}_{L^{\kappa_2}}\right),	}
\end{equation}
for every $(x,y) \in\, D(F_1)$ and $z \in\,L^{\kappa_2}(\mathcal{O})$.

Next, for every $\theta \in\,(0,1)$, we define 
\[b_\theta(t,\xi,\si,\la):=\frac{b(t,\xi,	\si,\la)}{1+\theta\,|b(t,\xi,	\si,\la)|},\ \ \ \ \ t \in\,[0,T],\ \ \ \xi \in\,\mathcal{O},\ \ \ (\si,\rho) \in\,\mathbb{R}^2.\]
 The function $b_\theta:[0,T]\times \mathcal{O}\times \mathbb{R}^2\to \mathbb{R}$ is continuous and bounded. Therefore, if for every $x, y \in\,H$ we define
 \[F_1^\theta(t,x,y)(\xi):= b_\theta(t,\xi,x(\xi),y(\xi)),\ \ \ \ t \in\,[0,T],\ \ \xi \in\,\mathcal{O},\]
 we have that $F_1^\theta:[0,T]\times \mathcal{H}_2\to H$ is continuous and bounded.
 Since 
\[|b_\theta(t,\xi,\si,\la)|\leq |b(t,\xi,\si,\la)|,\] 
according to \eqref{sy101} we have
\begin{equation}
\label{sy105}
\begin{array}{ll}
\ds{\Vert F_1^\theta(t,x,y)\Vert^2 }  &  \ds{\leq \Vert F_1(t,x,y)\Vert^2\leq c\int_\mathcal{O}\le(|a_2(\xi)|^2+|x(\xi)|^{2m_1}+|y(\xi)|^{2m_2}\right)\,d\xi}\\[14pt]
& \ds{\leq c\,\left(1+\Vert x\Vert_{L^{2m_1}}^{2m_1}+\Vert y\Vert_{L^{2m_2}}^{2m_2}\right).}
\end{array}	
\end{equation}
 Moreover, since 
 \[|b_\theta(t,\xi,\si,\la)-b(t,\xi,\si,\la)|=\theta\,|b(t,\xi,\si,\la)|^2,\] 
thanks again to \eqref{sy101} we have
 \begin{equation}  \label{sy106}
 \begin{array}{l}
\ds{\Vert F^\theta_1(t,x,y)-F_1(t,x,y)\Vert^2 \leq \theta^2 \int_{\mathcal{O}}|b(t,\xi,x(\xi),y(\xi))|^4\,d\xi}\\[10pt]
\ds{\leq c\,\theta^2\int_\mathcal{O}\left(|a_2(\xi)|^4+|x(\xi)|^{4m_1}+|y(\xi)|^{4m_2}\right)\,d\xi\leq c\,\theta^2\left(1+\Vert x\Vert_{L^{4m_1}}^{2m_1}+\Vert y\Vert_{L^{4m_2}}^{2m_2}\right)^2.	}
\end{array}
\end{equation}
This implies that if we define
\begin{equation}
\label{sy120}	
V(x,y):=c_V\left(1+\Vert x\Vert_{L^{4m_1}}^{2m_1}+\Vert y\Vert_{L^{4m_2}}^{2m_2}+\Vert y\Vert_{L^{2\kappa_1 m_1}}^{\kappa_1 m_1}\right),
\end{equation}
for some $c_V>0$ sufficiently large, as a consequence of \eqref{sy105} and \eqref{sy106} we obtain
\begin{equation} \label{1.4-tris}
\Vert F_1^\theta(t,x,y)\Vert^2\leq \Vert F_1(t,x,y)\Vert^2\leq V(x,y),	
\end{equation}
and
\begin{equation} \label{1.4-bis}
\Vert F^\theta_1(t,x,y)-F_1(t,x,y)\Vert\leq \theta\,V(x,y).	
\end{equation}
The function $V:\mathcal{H}_2\to [0,+\infty]$ is convex and lower semi-continuous and 
\[D_V:=\{V<\infty\}\subset D\times H.\]
\begin{Remark}
{\em In \eqref{1.4-bis} and \eqref{1.4-tris} we only need
\[V(x,y):=c_V\left(1+\Vert x\Vert_{L^{4m_1}}^{2m_1}+\Vert y\Vert_{L^{4m_2}}^{2m_2}\right).\]
The reason why we define $V$ as in \eqref{sy120} is because we will need it when we will prove that condition \eqref{eq h4} holds.
}
\end{Remark}

\subsection{About Hypothesis \ref{h3}} We introduce a function $g:[0,T]\times \mathcal{O}\times \mathbb{R}^2\to\mathbb{R}$ continuous, such that $g(t,\xi,\cdot):\mathbb{R}^2\to\mathbb{R}$ is Lipschitz continuous, uniformly with respect to $(t,\xi) \in\,[0,T]\times \mathcal{O}$. We assume that there exists $L_2<\alpha_{2, 1}$ that
\begin{equation}
\label{sy620}
\sup_{{\substack{(t,\xi) \in\,[0,T]\times \mathcal{O}\\\ \rho \in\,\mathbb{R}}}}	|g(t,\xi,\rho,\si_1)-g(t,\xi,\rho,\si_2)|\leq L_2\,|\si_1-\si_2|.
\end{equation}
Thus, if we define
\[F_2(t,x)(\xi):=g(t,\xi,x(\xi)),\ \ \ \ \xi \in\,\mathcal{O},\]
for every $x \in\,\mathcal{H}_2$, we have that $F_2$ satisfies Hypothesis \ref{h3}.

\subsection{About Hypothesis \ref{h4}}

As we did in Section \ref{sec2}, for every $\e \in\,(0,1)$ we define $D(F^\e):=D(F_1)\subseteq \mathcal{H}_2$ and
\[
F^{\e}(t,x):=\left(F_1(t,x) ,\e^{-1} F_2(x)\right),\ \ \ \ t \in\,[0,T],\ \ x \in\,D(F^\e).
\]
Moreover, for every
 $\theta \in\,(0,1)$ we define
\[F^{\e,\theta}(t,x):=\left(F^\theta_1(t,x) ,\e^{-1} F_2(x)\right),\ \ \ \ t \in\,[0,T],\ \ x \in\,\mathcal{H}_2.\]
Since $F^\theta_1$ and $F_2$ are both continuous, the mapping $F^{\e, \theta}(t,\cdot):\mathcal{H}_2\to\mathcal{H}_2$ is continuous.
Moreover, due to the linear growth of  $F_2(t,\cdot)$, which is uniform with respect to $t \in\,[0,T]$, according to \eqref{1.4-bis}
we have
\[\begin{array}{ll}
\ds{\Vert F^{\e, \theta}(t,x)\Vert_{\mathcal{H}_2}^2}  &  \ds{=\Vert F^\theta_1(t,x)\Vert^2+\e^{-2}\,\Vert F_2(t,x)\Vert^2\leq \Vert F_1(t,x)\Vert^2+\e^{-2}\,\Vert F_2(t,x)\Vert^2}\\[14pt]
&  \ds{\leq V(x)+\frac c{\e^2}\left(1+\Vert x\Vert_{\mathcal{H}_2}^2\right)=:V_\e(x),}
	\end{array}
\]
and the function $V_\e:\mathcal{H}_2\to[0,+\infty]$ is convex and lower semi-continuous.
Finally,
\[\begin{array}{ll}
\ds{\Vert F^{\e, \theta}(t,x)-F^\e(t,x)\Vert_{\mathcal{H}_2}}  &  \ds{=\Vert F^\theta_1(t,x)-F_1(t,x)\Vert\leq \theta\,V(x)\leq \theta\,V_\e(x).}
	\end{array}
\]

The operators $(A^\e, D(A^\e))$ and $Q^\e$ and the functions $F^{\e}$, $F^{\e, \theta}$ and $V_\e$ that we have just constructed satisfy the all the conditions assumed Appendix A, for each $\e \in\,(0,1)$. Therefore,  once we have fixed $(u_0,v_0) \in\,D_V$ for every $\e \in\,(0,1)$ there exists a martingale solution 
\[({\Omega}^{\e}, {\mathcal{F}}^{\e}, \{{\mathcal{F}}_t^{\e}\}_{t \in\,[0,T]}, {\mathbb{P}}^{\e},\{{W}^{\e}_t\}_{t \in\,[0,T]}, \{X^\e(t)\}_{t \in\,[0,T]}),\]
for system \eqref{eq spde}. Now, we check that condition \eqref{eq h4} holds.

\begin{Lemma}
\label{lemma7.2}
Assume that the function $a_2$ introduced in \eqref{sy100} belongs to $L^{2m_1}(\mathcal{O})$ and there exists $c_T>0$ such that
\begin{equation}
\label{sy624}
\sup_{t  \in\,[0,T]}\mathbb{E}^\epsilon\,\Vert W_{A_1}(t)\Vert_{L^{\bar{p}}}^{\bar{p}}+\sup_{t \in\,[0,T]}\mathbb{E}^\epsilon\,\Vert W_{A_2}(t)\Vert_{L^{\bar{q}}}^{\bar{q}}\leq c_T,\ \ \ \ \ \e \in\,(0,1),
\end{equation}
where $\bar{p}:=2 \kappa_2 m_1$ and $\bar{q}:= 2 \kappa_1 m_1\vee 4 m_2$ and where $\kappa_2$, $\kappa_1$, $m_1$ and $m_2$ are  the constants introduced in \eqref{sy101} and \eqref{sy100}. Moreover, assume that
\begin{equation}
\label{sy638}
\sup_{(t,\xi) \in\,[0,T]\times \mathcal{O}}|g(t,\xi,\si,\rho)|\leq c\left(1+|\si|^{\frac{2}{\kappa_1}\wedge \frac {m_1}{m_2}\wedge 1}+|\rho|\right),\ \ \ \ \ (\si,\rho) \in\,\mathbb{R}^2,	
\end{equation}
and
\begin{equation}
\label{sy630}
L_2<\a_{2, 1}\lambda_{\bar{q}},
\end{equation}
where $L_2$ is the constant introduced in \eqref{sy620}. Then
condition \eqref{eq h4} is satisfied.
\end{Lemma}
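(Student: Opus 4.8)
The plan is to prove the stronger uniform estimate $\sup_{t\in[0,T]}\mathbb{E}^\e\,V(X^\e(t))\le C_T\,V(u_0,v_0)$, with $C_T$ independent of $\e\in(0,1)$ and of $(u_0,v_0)\in D_V$; integrating over $[0,T]$ then gives \eqref{eq h4} with $M=T\,C_T$. In view of the explicit form \eqref{sy120} of $V$, this reduces to controlling, uniformly in $\e$, the three quantities $\mathbb{E}^\e\|u^\e(t)\|_{L^{4m_1}}^{2m_1}$, $\mathbb{E}^\e\|v^\e(t)\|_{L^{4m_2}}^{2m_2}$ and $\mathbb{E}^\e\|v^\e(t)\|_{L^{2\kappa_1 m_1}}^{\kappa_1 m_1}$. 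All the energy estimates below I would carry out rigorously on the approximating equations underlying the construction of Appendix~A (replacing $A_i$ by its Yosida approximation, for which the dissipativity relations \eqref{sy600} still hold), and then pass them to $X^\e$ using Fatou's lemma and the lower semicontinuity of $V$; for readability I describe them directly for $X^\e=(u^\e,v^\e)$. First I would estimate the slow motion in $L^{4m_1}$. Set $p=4m_1$ and $\rho^\e(t)=u^\e(t)-W^\e_{A_1}(t)$, which solves pathwise $\partial_t\rho^\e=A_1\rho^\e+F_1(t,\rho^\e+W^\e_{A_1},v^\e)$, $\rho^\e(0)=u_0$. Testing against $\rho^\e|\rho^\e|^{p-2}$ and combining \eqref{sy600} for $A_1$ with the one-sided growth bound \eqref{sy100} (applied pointwise, multiplied by $|\rho^\e(\xi)|^{p-2}$, and integrated over $\mathcal{O}$), H\"older's inequality — using $a_2\in L^{2m_1}$, $\kappa_1 p/2=2\kappa_1 m_1$ and $\kappa_2 p/2=\bar p$ — yields a differential inequality for $\|\rho^\e\|_{L^p}^p$. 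The key point is to propagate the \emph{half-power} $\Psi:=\|\rho^\e\|_{L^p}^{p/2}=\|\rho^\e\|_{L^{4m_1}}^{2m_1}$ rather than the full $p$-th power: after one Young's inequality with exponent $p/4=m_1$ on the cross term this becomes $\frac{d}{dt}\Psi\le C\Psi+C\big(1+\|v^\e\|_{L^{2\kappa_1 m_1}}^{\kappa_1 m_1}+\|W^\e_{A_1}\|_{L^{\bar p}}^{\kappa_2 m_1}\big)$. Gronwall's lemma, the bound $\|u^\e\|_{L^{4m_1}}^{2m_1}\le c\big(\Psi+\|W^\e_{A_1}\|_{L^{4m_1}}^{2m_1}\big)$, and the uniform Wiener moments from \eqref{eq sc1} and \eqref{sy624} (note $\kappa_2 m_1\le\bar p$) then give
\[
\mathbb{E}^\e\|u^\e(t)\|_{L^{4m_1}}^{2m_1}\le C_T\Big(1+\|u_0\|_{L^{4m_1}}^{2m_1}+\int_0^t\mathbb{E}^\e\|v^\e(s)\|_{L^{2\kappa_1 m_1}}^{\kappa_1 m_1}\,ds\Big),\qquad\e\in(0,1).
\]

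Next I would estimate the fast motion in $L^{\bar q}$. Set $q=\bar q$ and $\Gamma^\e(t)=v^\e(t)-W^\e_{A_2}(t)$, so that $\partial_t\Gamma^\e=\e^{-1}A_2\Gamma^\e+\e^{-1}F_2(u^\e,\Gamma^\e+W^\e_{A_2})$. Testing against $\Gamma^\e|\Gamma^\e|^{q-2}$, splitting $F_2(u^\e,\Gamma^\e+W^\e_{A_2})=\big[F_2(u^\e,\Gamma^\e+W^\e_{A_2})-F_2(u^\e,W^\e_{A_2})\big]+F_2(u^\e,W^\e_{A_2})$, and using \eqref{sy600} for $A_2$, the Lipschitz bound \eqref{sy620} on the bracketed term and the sublinear growth bound \eqref{sy638} on the remaining one, I get $\frac{d}{dt}\|\Gamma^\e\|_{L^q}^q\le-\frac{q\omega_q}{\e}\|\Gamma^\e\|_{L^q}^q+\frac{Cq}{\e}\big(1+\|u^\e\|_{L^{\vartheta q}}^{\vartheta}+\|W^\e_{A_2}\|_{L^q}\big)\|\Gamma^\e\|_{L^q}^{q-1}$, with $\vartheta=\frac{2}{\kappa_1}\wedge\frac{m_1}{m_2}\wedge 1$ the exponent in \eqref{sy638} and $\omega_q=\alpha_{2,1}\lambda_{\bar q}-L_2>0$ by \eqref{sy630}. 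Propagating again the half-power: with $\Theta:=\|\Gamma^\e\|_{L^q}$ this reduces to $\frac{d}{dt}\Theta\le-\frac{\omega_q}{2\e}\Theta+\frac{C}{\e}\big(1+\|u^\e\|_{L^{\vartheta q}}^{\vartheta}+\|W^\e_{A_2}\|_{L^q}\big)$; raising to the power $q/2$, applying Jensen to the time-convolution against the integrable kernel $\e^{-1}e^{-\omega_q(t-s)/2\e}$ — this is precisely where the prefactor $\e^{-1}$ cancels, thanks to $\omega_q>0$ — and noting that the choice of $\vartheta$ forces $\vartheta q\le 4m_1$ and $\vartheta q/2\le 2m_1$, so that $\|u^\e\|_{L^{\vartheta q}}^{\vartheta q/2}\le C\big(1+\|u^\e\|_{L^{4m_1}}^{2m_1}\big)$, I obtain, with \eqref{sy624} for $W^\e_{A_2}$,
\[
\sup_{t\in[0,T]}\mathbb{E}^\e\|v^\e(t)\|_{L^{\bar q}}^{\bar q/2}\le C_T\Big(1+\|v_0\|_{L^{\bar q}}^{\bar q/2}+\sup_{s\in[0,T]}\mathbb{E}^\e\|u^\e(s)\|_{L^{4m_1}}^{2m_1}\Big).
\]

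Finally I would close the loop. Since $2\kappa_1 m_1\le\bar q$ and $\kappa_1 m_1\le\bar q/2$, on the bounded domain $\mathcal{O}$ one has $\|v^\e\|_{L^{2\kappa_1 m_1}}^{\kappa_1 m_1}\le 1+C\|v^\e\|_{L^{\bar q}}^{\bar q/2}$; inserting the previous display into the slow estimate and writing $\bar A(t)=\sup_{s\le t}\mathbb{E}^\e\|u^\e(s)\|_{L^{4m_1}}^{2m_1}$ gives $\bar A(t)\le C_T\big(1+\|u_0\|_{L^{4m_1}}^{2m_1}+\|v_0\|_{L^{\bar q}}^{\bar q/2}\big)+C_T\int_0^t\bar A(s)\,ds$, hence by Gronwall $\bar A(T)\le C_T\big(1+\|u_0\|_{L^{4m_1}}^{2m_1}+\|v_0\|_{L^{\bar q}}^{\bar q/2}\big)$, uniformly in $\e$. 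The fast estimate then bounds $\sup_t\mathbb{E}^\e\|v^\e(t)\|_{L^{\bar q}}^{\bar q/2}$ by the same quantity, and $L^{\bar q}\hookrightarrow L^{4m_2}$ together with $2m_2\le\bar q/2$ controls $\mathbb{E}^\e\|v^\e(t)\|_{L^{4m_2}}^{2m_2}$ as well. Since $\bar q=2\kappa_1 m_1\vee 4m_2$ and $\bar q/2=\kappa_1 m_1\vee 2m_2$, the quantity $\|v_0\|_{L^{\bar q}}^{\bar q/2}$ equals either $\|v_0\|_{L^{2\kappa_1 m_1}}^{\kappa_1 m_1}$ or $\|v_0\|_{L^{4m_2}}^{2m_2}$, and hence — like $\|u_0\|_{L^{4m_1}}^{2m_1}$ and the constant $1$ — it is dominated by $V(u_0,v_0)$ in view of \eqref{sy120}. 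This gives $\sup_{t\in[0,T]}\mathbb{E}^\e V(X^\e(t))\le C_T\,V(u_0,v_0)$ and therefore \eqref{eq h4}. I expect the main obstacle to be exactly the coupled, two-sided character of these bounds: the slow estimate requires an $L^{2\kappa_1 m_1}$-moment of $v^\e$ while the fast estimate requires an $L^{\vartheta\bar q}$-moment of $u^\e$, and the induction closes only because the exponents $\bar p,\bar q$ and $\vartheta$ in \eqref{sy624}, \eqref{sy638}, \eqref{sy630} are calibrated so that every term produced by H\"older's and Young's inequalities is absorbed by the three $V$-terms and by no stronger quantity; the structural device that makes this possible, in both energy estimates, is to propagate the $p/2$-th power of the relevant $L^p$-norm — which is the power appearing in \eqref{sy120} — instead of its $p$-th power.
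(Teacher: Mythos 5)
Your proposal is correct and takes essentially the same route as the paper's proof: the same decompositions $\Gamma^\e_1=u^\e-W^\e_{A_1}$ and $\Gamma^\e_2=v^\e-W^\e_{A_2}$, the same $L^{4m_1}$ and $L^{\bar q}$ energy estimates based on \eqref{sy600}, \eqref{sy100}, \eqref{sy620}, \eqref{sy638} and \eqref{sy630} (including the check $\vartheta\bar q\le 4m_1$), the same exploitation of the kernel $\e^{-1}e^{-\vartheta(t-s)/\e}$ to cancel the $\e^{-1}$ prefactor, and the same closing Gronwall argument for the coupled slow/fast bounds, identified with $V$ at the end. The only difference—propagating the half-powers $\Vert\cdot\Vert^{2m_1}_{L^{4m_1}}$ and $\Vert\cdot\Vert^{\bar q/2}_{L^{\bar q}}$ directly, via a Jensen step on the time convolution, instead of the paper's full-power estimates \eqref{sy640}--\eqref{sy641} which are then reduced to the powers appearing in \eqref{sy120}—is a cosmetic variant of the same argument.
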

\begin{proof}
If we define $\Gamma^\e_1(t):= {u}^{\e}(t)-{W}^{\e}_{A_1}(t)$, due to \eqref{sy600}, we have
\[\begin{array}{l}
\ds{\frac{1}{4m_1}\frac{d}{dt} \|\Gamma^\e_{1}(t)\|^{4m_1}_{L^{4m_1}} =\langle A_1 \Gamma^\e_1(t)+ F_1(t,\Gamma^\e_1(t) + {W}^{\e}_{A_1}(t), {v}^{\e}(t)) , \Gamma^\e_1(t)\,|\Gamma^\e_1(t)|^{4m_1-2}\rangle}\\[14pt]
\ds{\quad \quad \leq -\a_{1,1}\lambda_{4m_1}\,\Vert \Gamma^\e_1(t)\Vert^{4m_1}_{L^{4m_1}}+\langle F_1(t,\Gamma^\e_1(t) + {W}^{\e}_{A_1}(t), {v}^{\e}(t)) , \Gamma^\e_1(t)\,|\Gamma^\e_1(t)|^{4m_1-2}\rangle.}
\end{array}
\]
Thanks to \eqref{sy100}, we  have 
\[\begin{array}{l}
\ds{ \langle F_1(t,\Gamma^\e_1(t) + {W}^{\e}_{A_1}(t), {v}^{\e}(t)),  \Gamma_1^{\e}(t) \Gamma^\e_1(t)\,|\Gamma^\e_1(t)|^{4m_1-2}\rangle}\\[14pt]
\ds{\leq \int_{\mathcal{O}}\left(a_2(\xi) + |\Gamma^\e_1(t,\xi)|^2+|{v}^{\e}(t,\xi)|^{\kappa_1}+|{W}^{\e}_{A_1}(t,\xi)|^{\kappa_2}\right)\,|\Gamma^\e_1(t,\xi)|^{4m_1-2}\,d\xi}\\[14pt]
\ds{\leq \int_{\mathcal{O}}\left(|a_2(\xi)|^{2m_1} + |\Gamma^\e_1(t,\xi)|^{4m_1}+|{v}^{\e}(t,\xi)|^{\bar{q}}+|{W}^{\e}_{A_1}(t,\xi)|^{\bar{p}}\right)\,d\xi,}
\end{array}
\]
so that
\[\begin{array}{ll}
\ds{\frac{d}{dt} \|\Gamma^\e_{1}(t)\|^{4m_1}_{L^{4m_1}} } & \ds{\leq c\,\Vert \Gamma^\e_1(t)\Vert^{4m_1}_{L^{4m_1}}+c\left(1+\Vert {W}^{\e}_{A_1}(t)\Vert_{L^{\bar{p}}}^{\bar{p}}+\Vert{v}^{\e}(t)\Vert^{\bar{q}}_{L^{\bar{q}}}\right).}
\end{array}
\]
By comparison, we obtain
\[\mathbb{E}^\epsilon\,\|\Gamma^\e_{1}(t)\|^{4m_1}_{L^{4m_1}} \leq c_T\left(1+\Vert u_0\Vert^{4m_1}_{L^{4m_1}}+\sup_{t \in\,[0,T]}\mathbb{E}^\epsilon\,\Vert {W}^{\e}_{A_1}(t)\Vert_{L^{\bar{p}}}^{\bar{p}}+\int_0^T\mathbb{E}^\epsilon\Vert{v}^{\e}(t)\Vert^{\bar{q}}_{L^{\bar{q}}}\,dt\right),\]
and, in view of \eqref{sy624}, this yields
\begin{equation} \label{sy633}
\mathbb{E}^\epsilon\,\|u^\e(t)\|^{4m_1}_{L^{4m_1}} \leq c_T\left(1+\Vert u_0\Vert^{4m_1}_{L^{4m_1}}\right)+c_T\int_0^t\mathbb{E}^\epsilon\,\Vert{v}^{\e}(s)\Vert^{\bar{q}}_{L^{\bar{q}}}\,ds,\ \ \ \ t \in\,[0,T].	
\end{equation}

As we have done above for $u^\e$, we denote $\Gamma^\e_2(t):= {v}^{\e}(t)-{W}^{\e}_{A_2}(t)$. Due to \eqref{sy600},  we have
\begin{equation}  \label{sy626}
\begin{array}{l}
\ds{\frac{1}{\bar{q}}\frac{d}{dt} \|\Gamma^\e_{2}(t)\|^{\bar{q}}_{L^{\bar{q}}} =\frac 1\e \langle A_2 \Gamma^\e_2(t)+ F_2(t,u^\e(t),\Gamma^\e_2(t) + {W}^{\e}_{A_2}(t)) , \Gamma^\e_2(t)\,|\Gamma^\e_2(t)|^{\bar{q}-2}\rangle}\\[14pt]
\ds{\quad \quad \leq -\frac{\a_{2,1}\lambda_{\bar{q}}}\e\,\Vert \Gamma^\e_2(t)\Vert^{\bar{q}}_{L^{\bar{q}}}+\frac 1\e\langle F_2(t,u^\e(t),\Gamma^\e_2(t) + {W}^{\e}_{A_2}(t)) , \Gamma^\e_2(t)\,|\Gamma^\e_2(t)|^{\bar{q}-2}\rangle.}
\end{array}	
\end{equation}
Since
\[\bar{q}\left(\frac 2{\kappa_1}\wedge \frac{m_1}{m_2}\wedge 1\right)=\left(2\kappa_1 m_1\vee 4 m_2\right)\left(\frac 2{\kappa_1}\wedge \frac{m_1}{m_2}\wedge 1\right)\leq 4 m_1,\]
in view of \eqref{sy620} and \eqref{sy638}, for every $\d>0$ we have
\begin{equation}
\label{sy627}	
\begin{array}{l}
\ds{|\langle F_2(t,u^\e(t),\Gamma^\e_2(t) + {W}^{\e}_{A_2}(t)) , \Gamma^\e_2(t)\,|\Gamma^\e_2(t)|^{\bar{q}-2}\rangle|}\\[14pt]
\ds{\leq L_2\int_{\mathcal{O}}|\Gamma^\e_2(t,\xi)|^{\bar{q}}\,d\xi+c\int_{\mathcal{O}	}|\Gamma^\e_2(t,\xi)|^{\bar{q}-1}\left(1+|u^\e(t,\xi)|^{\frac 2{\kappa_1}\wedge \frac{m_1}{m_2}\wedge 1}+|{W}^{\e}_{A_2}(t,\xi)|\right)\,d\xi}\\[14pt]
\ds{\leq (L_2+\delta)\,\Vert \Gamma^\e_2(t)\Vert_{L^{\bar{q}}}^{\bar{q}}+c_\delta\left(1+\Vert u^\e(t)\Vert_{L^{4 m_1}}^{4 m_1}+\Vert {W}^{\e}_{A_2}(t)\Vert_{L^{\bar{q}}}^{\bar{q}}\right).}
\end{array}
\end{equation}
Hence, if we put together \eqref{sy626} and \eqref{sy627}, 
we get
\[\frac{d}{dt} \|\Gamma^\e_{2}(t)\|^{\bar{q}}_{L^{\bar{q}}}\leq -\frac{1}\e\left(\a_{2, 1}\lambda_{\bar{q}}-(L_2+\delta)\right)\bar{q}\,\|\Gamma^\e_{2}(t)\|^{\bar{q}}_{L^{\bar{q}}}+\frac {c_\d \bar{q}}\e\left(1+\Vert u^\e(t)\Vert_{L^{4 m_1}}^{4 m_1}+\Vert {W}^{\e}_{A_2}(t)\Vert_{L^{\bar{q}}}^{\bar{q}}\right).\]
Now, thanks to \eqref{sy630}
we can fix $\bar{\delta}>0$ such that
\[\vartheta:=\left(\a_{2, 1}\lambda_{\bar{q}}-(L_2+\bar{\delta})\right)\bar{q}>0,\]
and by comparison, we have
\[\mathbb{E}^\epsilon\|\Gamma^\e_{2}(t)\|^{\bar{q}}_{L^{\bar{q}}}\leq e^{-\frac{\vartheta}\e t}\Vert v_0\Vert_{L^{\bar{q}}}^{\bar{q}}+\frac{c}\e\int_0^t e^{-\frac{\vartheta}\e (t-s)}\left(1+\mathbb{E}^\epsilon\Vert u^\e(s)\Vert_{L^{4 m_1}}^{4 m_1}+\mathbb{E}^\epsilon\Vert {W}^{\e}_{A_2}(s)\Vert_{L^{\bar{q}}}^{\bar{q}}\right)\,ds.\] 
As $\vartheta>0$, thanks to \eqref{sy624} we get
\begin{equation}
\label{sy634}
\begin{array}{ll}
\ds{\mathbb{E}^\epsilon\,\|v^\e_{2}(t)\|^{\bar{q}}_{L^{\bar{q}}}} & \ds{\leq e^{-\frac{\vartheta}\e t}\Vert v_0\Vert_{L^{\bar{q}}}^{\bar{q}}+\mathbb{E}^\epsilon\,\Vert W^{\e}_{A_2}(t)\Vert^{\bar{q}}_{L^{\bar{q}}}}\\[14pt]
&\ds{ \quad+c\,\left(1+\sup_{s \in\,[0,t]}\mathbb{E}^\epsilon\,\Vert u^\e(s)\Vert_{L^{4 m_1}}^{4 m_1}+\sup_{s \in\,[0,t]}\mathbb{E}^\epsilon\,\Vert W^{\e}_{A_2}(s)\Vert_{L^{\bar{q}}}^{\bar{q}}\right)}\\[18pt]
&\ds{\leq e^{-\frac{\vartheta}\e t}\Vert v_0\Vert_{L^{\bar{q}}}^{\bar{q}}+c\,\left(1+\sup_{s \in\,[0,t]}\mathbb{E}^\epsilon\,\Vert u^\e(s)\Vert_{L^{4 m_1}}^{4 m_1}\right)+c_T.}
\end{array}
	\end{equation}
Thus, if we plug \eqref{sy634} into \eqref{sy633}, we obtain
\[\sup_{s \in\,[0,t]}\,\mathbb{E}^\epsilon\,\|u^\e(s)\|^{4m_1}_{L^{4m_1}} \leq c_T\left(1+\Vert u_0\Vert^{4m_1}_{L^{4m_1}}+\Vert v_0\Vert_{L^{\bar{q}}}^{\bar{q}}\right)+c_T\int_0^t\sup_{r \in\,[0,s]}\mathbb{E}^\epsilon\,\Vert{u}^{\e}(r)\Vert^{4 m_1}_{L^{4 m_1}}\,ds,\]
and Gronwall's lemma implies
\begin{equation}
\label{sy640}
\sup_{s \in\,[0,t]}\,\mathbb{E}^\epsilon\,\|u^\e(s)\|^{4m_1}_{L^{4m_1}} \leq c_T\left(1+\Vert u_0\Vert^{4m_1}_{L^{4m_1}}+\Vert v_0\Vert_{L^{\bar{q}}}^{\bar{q}}\right).	
\end{equation}
Thanks to \eqref{sy634}, this gives
\begin{equation}
\label{sy641}
\sup_{s \in\,[0,t]}\,\mathbb{E}^\epsilon\,\|v^\e(s)\|^{\bar{q}}_{L^{\bar{q}}} \leq c_T\left(1+\Vert u_0\Vert^{4m_1}_{L^{4m_1}}+\Vert v_0\Vert_{L^{\bar{q}}}^{\bar{q}}\right).	
\end{equation}
Therefore, recalling how $V$ was defined in \eqref{sy120}, from \eqref{sy640} and \eqref{sy641} we obtain \eqref{eq h4}. 
\end{proof}

\begin{Remark}
{\em   When $\kappa_1\leq 2$ and $m_1\geq m_2$, condition \eqref{sy638} is always satisfied, as we are assuming that $g(t,\xi,\cdot)$ has linear growth, uniformly with respect to $(t,\xi) \in\,[0,T]\times \mathcal{O}$.

  }	
\end{Remark}

\subsection{About Hypothesis \ref{h5}}
We need to show that if $\mu^x(dy)$ is the invariant measure of the fast motion with frozen slow component $x$ introduced in Section \ref{fast motion}, then condition \eqref{sy22} is satisfied. Namely we need to show that
\[
\bar{V}(x):=\int_{H} V(x, y)\,\mu^x(dy)<\infty,\ \ \ \ \ x \in \,\Pi_1 D_V.\] 
Recalling how $V$ was defined in \eqref{sy120}, for every $x \in \,L^{4m_1}(\mathcal{O})$, we have
\begin{equation}
\label{sy675}
\bar{V}(x)\leq c_V\int_{H} \left(1+\Vert x\Vert_{L^{4m_1}}^{2 m_1}+\Vert y\Vert_{L^{\bar{q}}}^{\bar{q}/2}\right)\,\mu^x(dy)\leq c_V\left(1+\Vert x\Vert_{L^{4m_1}}^{2 m_1}\right)+c_V\int_{H}\Vert y\Vert_{L^{\bar{q}}}^{\bar{q}/2}\,\mu^x(dy),	
\end{equation}
where $\bar{q}=2\kappa_1 m_1\vee 4m_2$. This means that \eqref{sy22} follows once we show that 
\[
\int_H	\Vert y\Vert_{L^{\bar{q}}}^{\bar{q}/2}\,\mu^x(dy)<\infty,\ \ \ \ \ x \in \,L^{4m_1}(\mathcal{O}).
\]

\begin{Lemma}
Under the same assumptions of Lemma \ref{lemma7.2} we have 
\begin{equation}
\label{sy670}
\int_H	\Vert y\Vert_{L^{\bar{q}}}^{\bar{q}}\,\mu^x(dy)<\infty,\ \ \ \ \ x \in \,L^{4m_1}(\mathcal{O}).
\end{equation}
\end{Lemma}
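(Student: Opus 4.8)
The plan is to establish a uniform-in-time $L^{\bar{q}}$-moment bound for the fast motion with frozen slow component started at the origin, and then transfer it to the invariant measure $\mu^x$ by ergodicity and lower semicontinuity. Recalling \eqref{sy675}, proving \eqref{sy670} for $x \in L^{4m_1}(\mathcal{O})$ also yields \eqref{sy22}, i.e.\ Hypothesis \ref{h5}.

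First I would fix $x \in L^{4m_1}(\mathcal{O})$, consider the solution $v^{x,0}$ of \eqref{eqfrozenslow} with initial datum $y=0$, and set $\Gamma(t):=v^{x,0}(t)-W_{A_2}(t)$, where $W_{A_2}(t):=\int_0^t S_2(t-s)Q_2\,dW_s$. Repeating almost verbatim the computation in the proof of Lemma \ref{lemma7.2} that led to \eqref{sy626}--\eqref{sy627} — differentiating $\|\Gamma(t)\|_{L^{\bar{q}}}^{\bar{q}}$, using \eqref{sy600} for the linear part, the Lipschitz bound \eqref{sy620} to split off $F_2(x,W_{A_2}(t))$, the growth bound \eqref{sy638} for the remaining term, Young's inequality, and the exponent bookkeeping $\bar{q}\,\big(\tfrac{2}{\kappa_1}\wedge\tfrac{m_1}{m_2}\wedge 1\big)\le 4m_1$ — one obtains, for every $\delta>0$,
\[
\frac{d}{dt}\|\Gamma(t)\|_{L^{\bar{q}}}^{\bar{q}}\le-\big(\alpha_{2,1}\lambda_{\bar{q}}-(L_2+\delta)\big)\bar{q}\,\|\Gamma(t)\|_{L^{\bar{q}}}^{\bar{q}}+c_\delta\,\bar{q}\big(1+\|x\|_{L^{4m_1}}^{4m_1}+\|W_{A_2}(t)\|_{L^{\bar{q}}}^{\bar{q}}\big),
\]
where now $\|x\|_{L^{4m_1}}^{4m_1}$ is a fixed constant, so that no Gronwall coupling with the slow motion is needed. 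By \eqref{sy630} I would fix $\delta=\bar\delta$ with $\vartheta:=\big(\alpha_{2,1}\lambda_{\bar{q}}-(L_2+\bar\delta)\big)\bar{q}>0$, and since $\Gamma(0)=0$ a comparison argument gives
\[
\sup_{t\ge0}\mathbb{E}\,\|\Gamma(t)\|_{L^{\bar{q}}}^{\bar{q}}\le\frac{c_{\bar\delta}\,\bar{q}}{\vartheta}\Big(1+\|x\|_{L^{4m_1}}^{4m_1}+\sup_{s\ge0}\mathbb{E}\,\|W_{A_2}(s)\|_{L^{\bar{q}}}^{\bar{q}}\Big).
\]
Since $\mathcal{L}(W_{A_2}(r))=\mathcal{L}(W^{\e}_{A_2}(\e r))$ for any $\e\in(0,1)$, by the scaling identity used in the proof of Lemma \ref{l0.2}, assumption \eqref{sy624} gives $\sup_{r\ge0}\mathbb{E}\,\|W_{A_2}(r)\|_{L^{\bar{q}}}^{\bar{q}}\le c_T$, and hence
\[
\sup_{t\ge0}\mathbb{E}\,\|v^{x,0}(t)\|_{L^{\bar{q}}}^{\bar{q}}\le c\Big(\sup_{t\ge0}\mathbb{E}\,\|\Gamma(t)\|_{L^{\bar{q}}}^{\bar{q}}+\sup_{t\ge0}\mathbb{E}\,\|W_{A_2}(t)\|_{L^{\bar{q}}}^{\bar{q}}\Big)\le c_T\big(1+\|x\|_{L^{4m_1}}^{4m_1}\big).
\]

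Next I would pass to the limit $t\to\infty$. By \eqref{sy30} (ergodicity of $P^x_t$) and the fact that $\text{Lip}_b(H)$ is convergence-determining on the separable space $H$, $\mathcal{L}(v^{x,0}(t))\to\mu^x$ weakly in $H$ as $t\to\infty$. The functional $y\in H\mapsto\|y\|_{L^{\bar{q}}}^{\bar{q}}\in[0,+\infty]$, set to $+\infty$ off $L^{\bar{q}}(\mathcal{O})$, is nonnegative and lower semicontinuous for the norm topology of $H=L^2(\mathcal{O})$: if $y_n\to y$ in $H$, a subsequence converges a.e.\ and Fatou's lemma gives $\int_{\mathcal{O}}|y|^{\bar{q}}\,d\xi\le\liminf_n\int_{\mathcal{O}}|y_n|^{\bar{q}}\,d\xi$. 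Therefore, by the portmanteau theorem,
\[
\int_H\|y\|_{L^{\bar{q}}}^{\bar{q}}\,\mu^x(dy)\le\liminf_{t\to\infty}\mathbb{E}\,\|v^{x,0}(t)\|_{L^{\bar{q}}}^{\bar{q}}\le c_T\big(1+\|x\|_{L^{4m_1}}^{4m_1}\big)<\infty,
\]
which is \eqref{sy670}.

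I expect the only real work to be the uniform-in-time energy estimate for $\Gamma$: one has to make sure the term generated by the $x$-dependence of $g$ is absorbed into a constant multiple of $\|x\|_{L^{4m_1}}^{4m_1}$ rather than producing a higher, uncontrolled power, which is exactly where the growth restriction \eqref{sy638} and the exponent identity $\bar{q}\,\big(\tfrac{2}{\kappa_1}\wedge\tfrac{m_1}{m_2}\wedge 1\big)\le 4m_1$ enter, precisely as in Lemma \ref{lemma7.2}. The passage from the time-uniform bound to the moment bound for $\mu^x$ is then a soft weak-convergence argument.
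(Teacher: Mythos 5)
Your proof is correct, and it rests on the same quantitative core as the paper's: the frozen-$x$ analogue of the energy estimate of Lemma \ref{lemma7.2}, namely $\mathbb{E}\,\|v^{x,y}(t)\|_{L^{\bar{q}}}^{\bar{q}}\le e^{-\vartheta t}\|y\|_{L^{\bar{q}}}^{\bar{q}}+c\big(1+\|x\|_{L^{4m_1}}^{4m_1}\big)+\sup_{s}\mathbb{E}\,\|W_{A_2}(s)\|_{L^{\bar{q}}}^{\bar{q}}$, which you use with $y=0$. Where you genuinely differ is the transfer to $\mu^x$. The paper never invokes convergence of $\mathcal{L}(v^{x,0}(t))$ to $\mu^x$: it uses only the \emph{invariance} of $\mu^x$, writing $\int_H\big(\|y\|_{L^{\bar{q}}}^{\bar{q}}\wedge R\big)\,\mu^x(dy)=\int_H P^x_t\big(\|\cdot\|_{L^{\bar{q}}}^{\bar{q}}\wedge R\big)(y)\,\mu^x(dy)\le\int_H P^x_t\big(\|\cdot\|_{L^{\bar{q}}}^{\bar{q}}\big)(y)\,\mu^x(dy)$, inserting the estimate above at a fixed time $\bar{t}$ with $e^{-\vartheta\bar{t}}\le 1/2$ and absorbing the term $e^{-\vartheta\bar{t}}\int_H\|y\|_{L^{\bar{q}}}^{\bar{q}}\,\mu^x(dy)$ into the left-hand side (rigorously, the absorption should be done on the truncated, hence finite, quantity before letting $R\to\infty$). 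You instead use the mixing bound \eqref{sy30} together with the fact that bounded Lipschitz functions are convergence determining to get $\mathcal{L}(v^{x,0}(t))\to\mu^x$ weakly in $H$, and then lower semicontinuity of $y\mapsto\|y\|_{L^{\bar{q}}}^{\bar{q}}$ on $H$ plus portmanteau/Fatou; your use of the scaling identity from Lemma \ref{l0.2} to turn \eqref{sy624} into $\sup_{t\ge 0}\mathbb{E}\,\|W_{A_2}(t)\|_{L^{\bar{q}}}^{\bar{q}}<\infty$ is legitimate since only laws are involved. Both routes yield the same bound $c\big(1+\|x\|_{L^{4m_1}}^{4m_1}\big)$; yours is softer and avoids the absorption subtlety, while the paper's needs only invariance (not uniqueness or attractivity of $\mu^x$), so it is marginally more self-contained.
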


\begin{proof}
Due to the invariance of $\mu^x$, for every $t\geq 0$, we have
\[\begin{array}{ll}
\ds{\int_H	\Vert y\Vert_{L^{\bar{q}}}^{\bar{q}}\,\mu^x(dy)\leq }  &  \ds{\liminf_{R\to\infty}\int_H	\left(\,\Vert y\Vert_{L^{\bar{q}}}^{\bar{q}}\wedge R\right)\,\mu^x(dy)}\\[14pt]
&
\ds{=\liminf_{R\to\infty}\int_H	P^x_t\left(\,\Vert \cdot\Vert_{L^{\bar{q}}}^{\bar{q}}\wedge R\right)(y)\,\mu^x(dy)\leq \int_H	P^x_t\,(\Vert \cdot\Vert_{L^{\bar{q}}}^{\bar{q}})(y)\,\mu^x(dy),}
\end{array}\]
where $P^x_t$ is the semigroup associated with equation \eqref{eqfrozenslow}.

By proceeding as in the proof of Lemma \ref{lemma7.2},  we have
\[P^x_t\,(\Vert \cdot\Vert_{L^{\bar{q}}}^{\bar{q}})(y)=\mathbb{E}\,\Vert v^{x,y}(t)\Vert^{\bar{q}}_{L^{\bar{q}}}\leq e^{-\vartheta t}\Vert y\Vert_{L^{\bar{q}}}^{\bar{q}}+c\left(1+\Vert x\Vert_{L^{4m_1}}^{4 m_1}\right)+\sup_{s \in\,[0,t]}\mathbb{E}\Vert W_{A_2}(s)\Vert_{L^{\bar{q}}}^{\bar{q}},\]
and this implies
\[\int_H	\Vert y\Vert_{L^{\bar{q}}}^{\bar{q}}\,\mu^x(dy)\leq e^{-\vartheta t}\int_H	\Vert y\Vert_{L^{\bar{q}}}^{\bar{q}}\,\mu^x(dy)+c_t\left(1+\Vert x\Vert_{L^{4m_1}}^{4 m_1}\right).\]
Thus, if we pick $\bar{t}>0$ such that $e^{-\vartheta \bar{t}}\leq 1/2$, we conclude that 
\[\int_H	\Vert y\Vert_{L^{\bar{q}}}^{\bar{q}}\,\mu^x(dy)\leq 2c_{\bar{t}}\left(1+\Vert x\Vert_{L^{4m_1}}^{4 m_1}\right),\]
and \eqref{sy670} follows.

\end{proof}

\subsection{About Condition \eqref{sy20}}
According to \eqref{sy675} and \eqref{sy670}, we have that 
\[\bar{V}(x)\leq c\left(1+\Vert x\Vert_{L^{4m_1}}^{4 m_1}\right),\ \ \ \ \ \ x \in \,L^{4m_1}(\mathcal{O}).\]
Then, condition \eqref{sy20} holds if
\[ \sup_{\e \in (0,1)}\int_0^T\mathbb{E}^\e \Vert u^\e(s)\Vert_{L^{4m_1}}^{4 m_1}  dt <\infty, \]
and this is true because of \eqref{sy640}.

\appendix
\section{{Existence of a martingale solution}}

In what follows, $H$ is a separable Hilbert space, endowed with the scalar product $\langle\cdot,\cdot\rangle$ and the corresponding norm $\Vert\cdot\Vert$. 
We are considering the following equation in the Hilbert space $H$
\begin{equation}
	\label{eq-a}
	dX(t)=\left[A X(t)+F(t,X(t))\right]\,dt+Q\,dW_t,\ \ \ \ \ X(0)=x,
\end{equation}
where $W(t)$, $t\geq 0$,  is a space-time white noise. In what follows, we shall assume that the linear operators $A$ and $Q$ and the non-linearity $F$ 
satisfy the following assumptions.

\begin{Hypothesisp}{A1}\label{h1-app}
\begin{enumerate}
\item
The self-adjoint operator $A$ generates an analytic semigroup $S(t)$.
\item
There exists a complete orthonormal basis $\{e_k\}_{k \in \mathbbm{N}}$ and two sequences of positive real numbers $\{\a_{k}\}_{k \in \mathbb{N}}$ and $\{\lambda_k\}_{k \in \mathbb{N}}$ such that
\begin{equation}
Ae_k = -\a_ke_k,\ \ \ \ \ \ Qe_k=\lambda_{k}e_k,\ \ \ \ \ \ \ k \geq 1.
\end{equation} 
Moreover $\inf_{k \in\,\mathbb{N}}\a_k>0$. 
\item
There exists some $\gamma_0>0$ such that 
\begin{equation}
\sum_{k \in \mathbbm{N}} \lambda_k^2 \a_k^{2\gamma_0-1} < \infty.
\end{equation}
\end{enumerate}
\end{Hypothesisp}

For any $\gamma>0$, the operator $(-A)^{-\gamma}$ is compact and thus the embedding $D((-A)^{\gamma}) \hookrightarrow H$ is compact. Moreover, by the closed graph theorem, the set $\{ x \in\,H\,:\,\|(-A)^{\gamma}x\|  \leq R\}$ is not only precompact but also compact for any $\gamma>0$ and $R>0$. We denote the graph norm of the operator $(-A)^{\gamma}$ by $\|\cdot\|_{\gamma}$.

\begin{Hypothesisp}{A2}\label{h2-app}
There exists a family of measurable and bounded mappings $\{F^\theta\}_{\theta \in (0,1)}$ defined on $[0,T] \times H$ with values in $H$, such that the following conditions hold.
\begin{enumerate}
\item For every $\theta \in (0,1)$ and $h \in\,H$ and for every $t \in\,[0,T]$, the mapping 
\[x \in\,H\mapsto \langle F^\theta(t,x),h\rangle \in\,\mathbb{R}\] is  continuous.
\item
There exists a  convex and lower semi-continuous mapping $V: H \to [1+\infty]$ such that for all $\theta \in (0,1)$
\begin{equation}
\label{app1}
\Vert F^\theta(t,x) \Vert_H^{2}  \leq V(x),\ \ \ \ x \in H,
\end{equation}
and 
\begin{equation}
\label{app18}
\Vert F(t,x)- F^\theta(t,x)\Vert  \leq \theta V(x),\ \ \ \ x \in D(F).
\end{equation}
\end{enumerate}
\end{Hypothesisp}
Here and in what follows, we shall define $\Vert F(t,x)\Vert=+\infty$, whenever $x \notin D(F)$. In particular $D_V:=\left\{ x \in\,H\ :\ V(x)<\infty\right\}\subseteq D(F).$

In what follows, we shall prove the following result.
\begin{Theorem}
	\label{teo-mart}
Under Hypotheses \ref{h1-app} and \ref{h2-app}, and Hypothesis \ref{h3-app} below, for every initial condition $x \in\,D_V$ there exists a martingale solution in $C([0,T];H)$ for problem \eqref{eq-a}. That is, there exist some filtered probability space $(\bar{\Omega}, \bar{\mathcal{F}}, \{\bar{\mathcal{F}}_t\}_{t  \in\,[0,T]},\bar{\mathbb{P}})$, a space-time white noise $\bar{W}_t$, $t \in\,[0,T]$, adapted to the filtration $\{\bar{\mathcal{F}}_t\}_{t  \in\,[0,T]}$, and a $C([0,T];H)$-valued process $\bar{X}$ such that 
\[\bar{X}(t)=S(t)x + \int_0^t S(t-s) F(s,\bar{X}(s,x)) ds + \int_0^t S(t-s) Q\, d\bar{W}_s.\]
\end{Theorem}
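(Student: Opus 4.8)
The plan is to obtain $\bar X$ through a two-step approximation: first, for each fixed $\theta\in(0,1)$, a Galerkin scheme produces a martingale solution $X^\theta$ of \eqref{eq-a} with the rough coefficient $F$ replaced by the regular coefficient $F^\theta$; then a limit $\theta\to0$ replaces $F^\theta$ by $F$. Beyond Hypotheses \ref{h1-app} and \ref{h2-app}, the only additional input is the uniform a priori bound on $\int_0^T\mathbb{E}\,V(X(t))\,dt$ for the approximating solutions postulated in Hypothesis \ref{h3-app}; this is what drives both compactness arguments, since $\|F^\theta\|_0$ may blow up as $\theta\to0$ and only the pointwise bound \eqref{app1} is available uniformly in $\theta$.

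\textbf{Step 1 (fixed $\theta$).} Let $P_n$ be the orthogonal projection onto $\span\{e_1,\dots,e_n\}$, and set $A_n:=AP_n$, $Q_n:=QP_n$; since $\lambda_k>0$ for all $k$, $Q_n$ is nondegenerate on $P_nH$. The drift $(t,x)\mapsto P_nF^\theta(t,x)$ is bounded and measurable, so by standard existence results for finite-dimensional equations with bounded measurable drift and nondegenerate noise (e.g.\ via Girsanov's transform) the equation $dX^\theta_n=[A_nX^\theta_n+P_nF^\theta(t,X^\theta_n)]\,dt+Q_n\,dW_t$, $X^\theta_n(0)=P_nx$, has a weak solution on some stochastic basis, and by Hypothesis \ref{h3-app} it satisfies $\int_0^T\mathbb{E}^n V(X^\theta_n(t))\,dt\leq c\,V(x)$ uniformly in $n$. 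Decomposing $X^\theta_n=\Gamma^\theta_n+Z^\theta_n$ with $Z^\theta_n(t)=\int_0^tS(t-s)Q_n\,dW_s$, and using $\|F^\theta(s,\cdot)\|\leq V(\cdot)^{1/2}$, $\|(-A)^\gamma S(t)\|\leq c\,t^{-\gamma}$ and the $V$-bound, one gets $\sup_n\mathbb{E}^n\sup_{t\leq T}\|\Gamma^\theta_n(t)\|^2_{D((-A)^\gamma)}\leq c_\gamma\,V(x)$ for every $\gamma<1$ together with a uniform time-H\"older bound, while \eqref{reg} and the factorization method (see \cite{DP}) give uniform moment and time-H\"older bounds for $Z^\theta_n$ in $D((-A)^\gamma)$, $\gamma<\gamma_0\wedge1/2$. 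Hence $\{\mathcal L(X^\theta_n)\}_n$ is tight in $C([0,T];H)$; by Prokhorov and Skorokhod we extract, on a new basis, $\tilde X^\theta_n\to X^\theta$ a.s.\ in $C([0,T];H)$, with the driving noises converging as well. To identify the limit note that, for each $s$ and $h\in H$, $\langle P_nF^\theta(s,\tilde X^\theta_n(s)),h\rangle=\langle F^\theta(s,\tilde X^\theta_n(s)),P_nh-h\rangle+\langle F^\theta(s,\tilde X^\theta_n(s)),h\rangle\to\langle F^\theta(s,X^\theta(s)),h\rangle$ (the first term because $\|F^\theta\|_0<\infty$ and $P_nh\to h$, the second by the weak continuity in condition 1 of Hypothesis \ref{h2-app} and $\tilde X^\theta_n(s)\to X^\theta(s)$); combined with $\|P_nF^\theta\|\leq\|F^\theta\|_0$ and dominated convergence this passes the nonlinear term to the limit, and the stochastic convolutions converge by the uniform estimates. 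We obtain a martingale solution $X^\theta$ of \eqref{eq-a} with $F$ replaced by $F^\theta$, and by lower semicontinuity of $V$ and Fatou, $\int_0^T\mathbb{E}\,V(X^\theta(t))\,dt\leq c\,V(x)$ uniformly in $\theta$.

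\textbf{Step 2 (uniform tightness in $\theta$).} Repeating the estimates of Step 1 but now using only the pointwise bound \eqref{app1} (never the $\theta$-dependent sup-norm) together with $\int_0^T\mathbb{E}\,V(X^\theta(t))\,dt\leq c\,V(x)$, the family $\{\mathcal L(X^\theta)\}_\theta$ is tight in $C([0,T];H)$. Prokhorov and Skorokhod, applied to the pair $(X^\theta,W^\theta)$, produce $(\bar\Omega,\bar{\mathcal F},\bar{\mathbb P})$, a space-time white noise $\bar W$, and processes $\tilde X^\theta\to\bar X$ a.s.\ in $C([0,T];H)$ with $\tilde W^\theta\to\bar W$, the stochastic convolutions converging to $\bar W_A(t)=\int_0^tS(t-s)Q\,d\bar W_s$, and $\int_0^T\mathbb{E}\,V(\bar X(t))\,dt<\infty$ by Fatou and lower semicontinuity, so that $\bar X(t)\in D(F)$ for a.e.\ $t$, $\bar{\mathbb P}$-a.s. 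It remains to identify $\lim_{\theta\to0}\int_0^tS(t-s)F^\theta(s,\tilde X^\theta(s))\,ds$.

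\textbf{Step 3 (main obstacle: the limit $\theta\to0$).} Since $F$ itself is not weakly continuous, this limit is the heart of the proof. By \eqref{app1} the family $\{F^\theta(\cdot,\tilde X^\theta(\cdot))\}_\theta$ is bounded in $L^2(\bar\Omega\times[0,T];H)$, hence along a subsequence it converges weakly to some $G$, and consequently $\int_0^\cdot S(\cdot-s)F^\theta(s,\tilde X^\theta(s))\,ds\to\int_0^\cdot S(\cdot-s)G(s)\,ds$. To see that $G(s)=F(s,\bar X(s))$, fix an auxiliary $\theta'\in(0,1)$ and write $F^\theta(s,\tilde X^\theta(s))=F^{\theta'}(s,\tilde X^\theta(s))+\bigl(F^\theta-F^{\theta'}\bigr)(s,\tilde X^\theta(s))$: as $\theta\to0$ the first term converges weakly to $F^{\theta'}(s,\bar X(s))$ by the weak continuity of $F^{\theta'}$ and $\tilde X^\theta(s)\to\bar X(s)$, while by \eqref{app18} the second term is bounded in $L^1(\bar\Omega\times[0,T];H)$ by $(\theta+\theta')\int_0^T\mathbb{E}\,V(\tilde X^\theta(s))\,ds\leq c\,(\theta+\theta')\,V(x)$. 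Letting $\theta\to0$ gives $\|G-F^{\theta'}(\cdot,\bar X(\cdot))\|_{L^1}\leq c\,\theta'\,V(x)$, and then letting $\theta'\to0$, with $F^{\theta'}(\cdot,\bar X(\cdot))\to F(\cdot,\bar X(\cdot))$ in $L^1$ again by \eqref{app18} and $\int_0^T\mathbb{E}\,V(\bar X(s))\,ds<\infty$, forces $G=F(\cdot,\bar X(\cdot))$. Therefore $\bar X(t)=S(t)x+\int_0^tS(t-s)F(s,\bar X(s))\,ds+\bar W_A(t)$. The remaining points — that $\bar X\in L^2(\bar\Omega;C([0,T];H))$, that it is adapted, and that $\bar W$ is a cylindrical Wiener process for the filtration generated by $(\bar X,\bar W)$ — follow in the standard way from the Skorokhod construction and the uniform bounds, completing the proof of Theorem \ref{teo-mart}.
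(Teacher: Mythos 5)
Your overall architecture coincides with the paper's: regularize $F$ by the bounded, weakly continuous $F^\theta$, use the a priori bound of Hypothesis \ref{h3-app} together with \eqref{app1} to get tightness of the approximating laws in $C([0,T];H)$ (the paper's Lemmas \ref{l0.3}, \ref{l0.4}, \ref{l4.1.6}), pass to a Skorokhod limit $(\bar X,\bar W)$, control $\int_0^T\bar{\mathbb{E}}\,V(\bar X(t))\,dt$ by Fatou and lower semicontinuity of $V$, and identify the drift in the limit by comparing with a fixed regular level and invoking \eqref{app18}. You differ from the paper in two local respects. First, for fixed $\theta$ you reconstruct the martingale solution of \eqref{eq app} by a Galerkin scheme plus Girsanov, whereas the paper simply cites the known existence result for bounded drifts satisfying condition 1 of Hypothesis \ref{h2-app} (\cite{GDGB}, \cite{DP}); your route is more self-contained but reproves a quoted theorem. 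Second, for the limit $\theta\to0$ the paper never extracts a weak limit of the drift: it works with the weak formulation \eqref{eq1.2} against $\xi\in C^1([0,T];D(A^*))$ and estimates $\bar{\mathbb{E}}\sup_t|\int_0^t\langle F_n(s,\bar X_n(s))-F(s,\bar X(s)),\xi(s)\rangle ds|$ directly through a three-term splitting with an auxiliary index $m$ (Lemma \ref{lemma4.6}); you instead use boundedness of $F^\theta(\cdot,\tilde X^\theta(\cdot))$ in $L^2(\bar\Omega\times[0,T];H)$ to get a weak limit $G$ and identify $G=F(\cdot,\bar X(\cdot))$ by comparison with a fixed $F^{\theta'}$ — the same idea ($\theta'$ playing the role of the paper's $m$), repackaged through weak compactness and weak lower semicontinuity of the $L^1$-norm. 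Both arguments are legitimate; the paper's gives convergence of the drift term uniformly in $t$ in expectation, while yours needs one extra observation, namely that the convolution $\int_0^tS(t-s)F^\theta(s,\tilde X^\theta(s))\,ds$ converges only weakly to $\int_0^tS(t-s)G(s)\,ds$, and must be matched with the in-probability limit $\bar X(t)-S(t)x-\bar W_A(t)$ coming from the other terms (uniqueness of weak limits then closes the identity).

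Two caveats you should repair, though neither is fatal. (i) Hypothesis \ref{h3-app} is stated for martingale solutions $X_n$ of the infinite-dimensional approximating problem \eqref{eq app}, not for your Galerkin approximants $X^\theta_n$, so you cannot literally invoke \eqref{eqh3} at the Galerkin level; fortunately for fixed $\theta$ you never need it there, since $\|P_nF^\theta\|\le\|F^\theta\|_0$ suffices for all the Step 1 estimates, and the uniform-in-$\theta$ bound $\int_0^T\mathbb{E}\,V(X^\theta(t))\,dt\le c\,V(x)$ should simply be Hypothesis \ref{h3-app} applied to the solutions $X^\theta$ themselves, exactly as the paper does. (ii) The assertions that the mild equation transfers to the Skorokhod copies and that the stochastic convolutions converge to $\bar W_A$ are glossed; the clean way (the paper's) is to phrase everything through the weak formulation, where for $\xi\in C^1([0,T];D(A^*))$ the noise term $\int_0^t\langle Q\,d\bar W^n_s,\xi(s)\rangle$ is, after integration by parts, a pathwise continuous functional of $\bar W^n$ and so passes to the limit under the a.s. convergence in $C([0,T];\mathcal{D}^\prime)$.
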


\subsection{The approximating problem}

Let $\{\theta_n\}_{n \in \mathbb{N}}\subset (0,1)$ be a sequence which converges to $0$. We denote $F^{\theta_n}$ by $F_n$ for simplicity and consider the following approximating problem
\begin{equation}\label{eq app}
dX_n(t)=\left[AX_n(t) dt + F_n(t, X_n(t)) \right]dt + Q dW_t,\ \ \ \ \ X_n(0)=x \in D(F).
\end{equation}

As a consequence of  Hypotheses \ref{h1-app} and \ref{h2-app}, equation \eqref{eq app} admits a martingale solution in $C([0,T];H)$ (see  \cite{GDGB} and \cite{DP} for a proof). That is, there exist some probability space $(\Omega^n, \mathcal{F}^n, \{\mathcal{F}^n_t\}_{t  \in\,[0,T]},\mathbb{P}^n)$, a space-time white noise $W^n_t$, $t \in\,[0,T]$, adapted to the filtration $\{\mathcal{F}^n_t\}_{t  \in\,[0,T]}$ and a $C([0,T];H)$-valued process $X_n$ such that 
\begin{equation}\label{eq mgs1}
\begin{aligned}
X_n(t)=S(t)x + \int_0^t S(t-s) F_n(s,X_n(s,x)) ds + W_A^n(t),
\end{aligned}
\end{equation}
where $W_A^n(t)$ is the stochastic convolution 
\begin{equation}
W_A^n(t): = \int_0^t S(t-s) Q\, dW^n_s.
\end{equation}
Equivalently, $X_n$ satisfies the following equation \begin{equation}\label{eq1.2}
\begin{aligned}
\langle X_{n}(t), \xi(t) \rangle= &\langle x, \xi(0) \rangle +  \int_0^t \langle X_{n}(s) ,  \xi'(s)+A^*\xi(s) \rangle\, ds\\[14pt]
& +  \int_0^t \langle F_{n}(s, X_{n}(s)) ,  \xi(s) \rangle\, ds +  \int_0^t \langle Q dW^n_s, \xi(s) \rangle,
\end{aligned}
\end{equation}
for any $\xi \in C^1([0,T];D(A^*))$.

\begin{Hypothesisp}{A3}\label{h3-app}
For every $x \in\,D(F)$, we have
\begin{equation}\label{eqh3}
\sup_{n \in \mathbb{N}}  \int_0^T\mathbb{E}^n V(X_n(t))\,dt  \leq c\,V(x)
\end{equation}
\end{Hypothesisp}

Notice that, as a consequence of Hypothesis \ref{h3-app}, we have that  $\mathbb{P}^n(X_n(t) \in D(F),\ t \in\,[0,T])=1$, for all $n \in \mathbb{N}$.
Moreover, as a consequence of Hypothesis \ref{h1-app}, it is possible to show that for every $p\geq 1$ and $\gamma \in [0,\gamma_0)$
\begin{equation}
\label{app2}
\sup_{n \in \mathbb{N}}\,\mathbb{E}^n \sup_{t \in [0,T]} \|W^n_A(t)\|_\gamma^p < \infty.
\end{equation}
Moreover,
\begin{equation}
\label{app3}
\sup_{n \in\,\mathbb{N}}\,\mathbb{E}^n \|W_A^n(t+h) - W_A^n(t)\|_\gamma^p \leq c\,h^{p\gamma_\star},
\end{equation}
where $\gamma_\star:=(\gamma_0-\gamma) \wedge 1/2$.
In particular, from the Garsia-Rademich-Rumsey Theorem we obtain that for every $\gamma \in [0,\gamma_0)$ there exists some $\beta>0$ such that
\begin{equation}
\label{app5}
\sup_{n \in\,\mathbb{N}}	\mathbb{E}^n\Vert W^n_A\Vert^2_{C^\beta([0,T];D((-A)^\gamma))}<\infty.
\end{equation}
For all details see e.g. \cite[Section 5.4]{DP}.

\medskip

Now, we define
\[
\Psi_{n} (t):= \int_0^t S(t-s) F_{n}(s, X_{n}(s)) ds.
\]
\begin{Lemma}\label{l0.3}
Let Hypotheses \ref{h1-app}, \ref{h2-app} and \ref{h3-app} hold and assume that $\gamma \in [0,1/2) $. Then
\begin{equation}
\label{app6}
\sup_{n \in \mathbb{N}}\mathbb{E}^n \sup_{t \in [0,T]}\|\Psi_n(t)\|_\gamma^{2} \leq c_T  V(x),\ \ \ \ \ x \in\,D_V.
\end{equation}
\end{Lemma}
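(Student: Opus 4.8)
The plan is to bound $\Psi_n$ pathwise by exploiting the analytic smoothing of $S$ together with the growth bound $\|F_n(s,x)\|^2\le V(x)$ from \eqref{app1}, and then to pass to the expectation using Hypothesis \ref{h3-app}. Since $A$ is self-adjoint and negative with $\inf_k\a_k>0$, the semigroup $S(t)$ is analytic and there is $c_\gamma>0$ with $\|(-A)^\gamma S(\tau)\|_{\mathcal{L}(H)}\le c_\gamma\,\tau^{-\gamma}$ for all $\tau>0$. Hence, for every $n\in\mathbb{N}$ and $t\in[0,T]$, using \eqref{app1},
\[
\|\Psi_n(t)\|_\gamma\le\int_0^t\|(-A)^\gamma S(t-s)\|_{\mathcal{L}(H)}\,\|F_n(s,X_n(s))\|\,ds\le c_\gamma\int_0^t(t-s)^{-\gamma}\sqrt{V(X_n(s))}\,ds,
\]
where the integrand is integrable on $[0,t]$ for $\mathbb{P}^n$-a.e.\ $\omega$, because Hypothesis \ref{h3-app} gives $\int_0^T V(X_n(s))\,ds<\infty$ almost surely.

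Next I apply the Hölder inequality with conjugate exponents $(2,2)$, which is legitimate precisely because $\gamma<1/2$, so that $s\mapsto(t-s)^{-2\gamma}$ is integrable on $[0,t]$. This yields, uniformly in $t\in[0,T]$,
\[
\|\Psi_n(t)\|_\gamma\le c_\gamma\Big(\int_0^t(t-s)^{-2\gamma}\,ds\Big)^{1/2}\Big(\int_0^t V(X_n(s))\,ds\Big)^{1/2}\le c_\gamma\Big(\frac{T^{1-2\gamma}}{1-2\gamma}\Big)^{1/2}\Big(\int_0^T V(X_n(s))\,ds\Big)^{1/2}.
\]
Since the right-hand side does not depend on $t$, I may take the supremum over $t\in[0,T]$, square, take the expectation $\mathbb{E}^n$, and invoke Hypothesis \ref{h3-app}:
\[
\mathbb{E}^n\sup_{t\in[0,T]}\|\Psi_n(t)\|_\gamma^2\le c_\gamma^2\,\frac{T^{1-2\gamma}}{1-2\gamma}\,\mathbb{E}^n\int_0^T V(X_n(s))\,ds\le c_T\,V(x),
\]
with $c_T$ independent of $n$, which is \eqref{app6}.

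There is no genuine difficulty here; the two points worth keeping an eye on are (i) the restriction $\gamma\in[0,1/2)$, which is exactly what makes the time singularity $(t-s)^{-2\gamma}$ integrable in the $L^2$ Hölder splitting, and (ii) the fact that $\Psi_n$ is a pathwise convolution (deterministic in $\omega$), so that the supremum in time can be pulled out before taking the expectation and no maximal inequality or factorization argument is needed; Hypothesis \ref{h3-app} then supplies precisely the uniform-in-$n$ control of $\mathbb{E}^n\int_0^T V(X_n(s))\,ds$.
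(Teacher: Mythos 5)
Your proof is correct and follows essentially the same route as the paper: the smoothing bound $\|(-A)^\gamma S(\tau)\|_{\mathcal{L}(H)}\le c\,\tau^{-\gamma}$, the Cauchy--Schwarz splitting in time (which is where $\gamma<1/2$ enters), and then Hypothesis \ref{h3-app} via \eqref{app1} to control $\mathbb{E}^n\int_0^T\|F_n(s,X_n(s))\|^2\,ds$ uniformly in $n$. The only cosmetic difference is that you invoke \eqref{app1} pathwise before the H\"older step, while the paper applies it after taking expectations; the argument is the same.
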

\begin{proof}
We have
\[
\begin{aligned}
\mathbb{E}^n \sup_{t \in [0,T]}\|\Psi_n(t)\|_\gamma^{2} \leq&\  c \sup_{t \in [0,T]} \left(\int_0^t (t-s)^{-2\gamma } \,ds \right)^{} \mathbb{E}^n\sup_{t \in [0,T]}\int_0^t \|F_{n}(s, X_{n}(s))\|^{2} ds^{}\\
\leq& \,c_T \, \mathbb{E}^n \int_0^T \|F_{n}(s, X_{n}(s))\|^{2} ds.
\end{aligned}
\]
Then \eqref{app6} follows from \eqref{app1}    and \eqref{eqh3}. 
\end{proof}
As for  the time continuity for $\Psi_n$, we have the following result.

\begin{Lemma}\label{l0.4}
Let Hypotheses \ref{h1-app}, \ref{h2-app} and \ref{h3-app} hold and fix $\gamma \in [0, 1/2)$. Then there exists $\beta>0$ such that 
\[
\sup_{n \in \mathbb{N}}\mathbb{E}^n \left[\sup_{\delta\in(0,1)}\sup_{t, s\in [0,T],\ |t-s|\leq \delta}\left(\frac{\|(\Psi_n(t)- \Psi_n(s))\|_\gamma}{|t-s|^{\beta}}\right)^2\right] \leq c\,V(x),\ \ \ \ \ x \in\,D_V.
\]
\end{Lemma}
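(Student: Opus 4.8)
The plan is to bound the increment $\Psi_n(t)-\Psi_n(s)$ pathwise, for $0\le s<t\le T$, by a quantity of the form $c_T\,(t-s)^\beta(\Phi_n+M_n)$, where $\Phi_n$ and $M_n$ are random variables that do \emph{not} depend on $t,s$ and whose second moments are bounded by $c\,V(x)$ uniformly in $n$. Once this is available, the supremum over $t,s$ and $\delta$ passes through trivially, so --- unlike for the stochastic convolution --- no Garsia--Rodemich--Rumsey argument is needed. First I would use the semigroup identity
\[
\Psi_n(t)-\Psi_n(s)=\int_s^t S(t-r)F_n(r,X_n(r))\,dr+\left(S(t-s)-I\right)\Psi_n(s).
\]

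For the first summand I would combine the analyticity bound $\|(-A)^\gamma S(r)\|_{\mathcal L(H)}\le c\,r^{-\gamma}$ with the triangle inequality for the Bochner integral and the Cauchy--Schwarz inequality; this is where $\gamma<1/2$ is used, since it makes $r\mapsto r^{-2\gamma}$ integrable near $0$. This gives
\[
\left\|\int_s^t S(t-r)F_n(r,X_n(r))\,dr\right\|_\gamma\le c\left(\int_s^t(t-r)^{-2\gamma}\,dr\right)^{1/2}\Phi_n=c\,(t-s)^{(1-2\gamma)/2}\,\Phi_n,
\]
with $\Phi_n:=\left(\int_0^T\|F_n(r,X_n(r))\|^2\,dr\right)^{1/2}$. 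By \eqref{app1} and Hypothesis \ref{h3-app}, $\mathbb{E}^n\Phi_n^2=\int_0^T\mathbb{E}^n\|F_n(r,X_n(r))\|^2\,dr\le\int_0^T\mathbb{E}^n V(X_n(r))\,dr\le c\,V(x)$, uniformly in $n$.

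For the second summand I would fix an auxiliary exponent $\gamma'\in(\gamma,1/2)$ and apply Lemma \ref{l0.3} with $\gamma'$ in place of $\gamma$, so that $M_n:=\sup_{r\in[0,T]}\|\Psi_n(r)\|_{\gamma'}$ satisfies $\mathbb{E}^n M_n^2\le c_T\,V(x)$ uniformly in $n$ and, in particular, $\Psi_n(s)\in D((-A)^{\gamma'})$ almost surely. Since $(-A)^\gamma$ commutes with $S(h)$ and $\|(S(h)-I)(-A)^{-(\gamma'-\gamma)}\|_{\mathcal L(H)}\le c\,h^{\gamma'-\gamma}$ (a standard analytic-semigroup estimate, valid because $0<\gamma'-\gamma<1$ and $\inf_k\a_k>0$), one obtains
\[
\left\|(S(t-s)-I)\Psi_n(s)\right\|_\gamma=\left\|(S(t-s)-I)(-A)^{-(\gamma'-\gamma)}(-A)^{\gamma'}\Psi_n(s)\right\|\le c\,(t-s)^{\gamma'-\gamma}\,M_n.
\]

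Setting $\beta:=\tfrac{1-2\gamma}{2}\wedge(\gamma'-\gamma)>0$ and absorbing the surplus powers of $t-s$ into a constant via $t-s\le T$, I would conclude $\|\Psi_n(t)-\Psi_n(s)\|_\gamma\le c_T\,(t-s)^\beta(\Phi_n+M_n)$ for all $0\le s<t\le T$, hence
\[
\sup_{\delta\in(0,1)}\ \sup_{\substack{t,s\in[0,T]\\ |t-s|\le\delta}}\left(\frac{\|\Psi_n(t)-\Psi_n(s)\|_\gamma}{|t-s|^{\beta}}\right)^2\le c_T\,(\Phi_n+M_n)^2,
\]
and the bound $\mathbb{E}^n(\Phi_n+M_n)^2\le c\,V(x)$ finishes the proof. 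There is no genuine obstacle here; the two points to keep track of are that $\gamma<1/2$ is used twice (for the integrability of $r^{-2\gamma}$ and to leave room for $\gamma'$ in Lemma \ref{l0.3}), and that all the uniformity in $n$ is inherited from \eqref{app1}, Hypothesis \ref{h3-app}, and Lemma \ref{l0.3}.
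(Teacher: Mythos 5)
Your proof is correct, and it reaches the same conclusion by a partly different route. The treatment of the ``new'' piece $\int_s^t S(t-r)F_n(r,X_n(r))\,dr$ is essentially identical to the paper's: the analyticity bound $\|(-A)^\gamma S(r)\|_{\mathcal L(H)}\le c\,r^{-\gamma}$ plus Cauchy--Schwarz, with the random factor $\int_0^T\|F_n(r,X_n(r))\|^2\,dr$ controlled through \eqref{app1} and Hypothesis \ref{h3-app}; and in both arguments the pathwise bound is uniform in $t,s$, so the supremum passes through before taking expectations. Where you diverge is in the ``old'' piece: the paper keeps the difference of semigroups inside the integral, writes $S(t+h-s)-S(t-s)=\int_{t-s}^{t+h-s}AS(u)\,du$, and estimates directly with $\|(-A)^{\gamma+1}S(r)\|_{\mathcal L(H)}\le c\,r^{-(\gamma+1)}$, followed by the elementary inequalities $\bigl((t-s)^{-\gamma}-(t+h-s)^{-\gamma}\bigr)^{2}\le (t-s)^{-2\gamma}-(t+h-s)^{-2\gamma}$ and $(t+h)^{1-2\gamma}-t^{1-2\gamma}\le h^{1-2\gamma}$; you instead use the identity $\Psi_n(t)-\Psi_n(s)=\int_s^t S(t-r)F_n\,dr+(S(t-s)-I)\Psi_n(s)$, pick an auxiliary $\gamma'\in(\gamma,1/2)$, invoke Lemma \ref{l0.3} at level $\gamma'$, and apply the standard bound $\|(S(h)-I)(-A)^{-(\gamma'-\gamma)}\|_{\mathcal L(H)}\le c\,h^{\gamma'-\gamma}$. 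Your version is more modular (it recycles Lemma \ref{l0.3} and avoids the slightly delicate cancellation estimates), at the price of needing strict room above $\gamma$ (harmless here since $\gamma<1/2$ is open) and of yielding the exponent $\beta=\tfrac{1-2\gamma}{2}\wedge(\gamma'-\gamma)$, which, like the paper's $\beta<\tfrac12-\gamma$, is all the statement requires; the paper's computation works directly at the given $\gamma$ without introducing $\gamma'$. All uniformity in $n$ is correctly traced back to \eqref{app1}, Hypothesis \ref{h3-app} and Lemma \ref{l0.3}, so there is no gap.
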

\begin{proof}
For every $\gamma \in (0,1/2)$, $h \in\,(0,\delta)$ and $t \in\,[0,T-\delta]$, we have
\[
\begin{array}{ll}
\ds{\|\Psi_n(t+h)- \Psi_n(t) \|_\gamma\leq }  &  \ds{\,c\, \int_t^{t+h}\left\| S(t+h-s) F_{n}(s, X_{n}(s))\right\|_\gamma\, ds}\\[14pt]
&\ds{+ c\, \int_0^{t} \|(S(t+h-s)-S(t-s)) F_{n}(s, X_{n}(s))\|_\gamma\, ds,}\end{array}
\]
so that
\[\begin{array}{l}
\ds{\|(\Psi_n(t+h)- \Psi_n(t)) \|_\gamma^2\leq c \int_t^{t+h} (t+h-s)^{-2 \gamma }\, ds \int_t^{t+h} \|F_{n}(s,X_{n}(s))\|^{2} ds}\\[14pt]
\ds{+c \left(\int_0^{t} \int_{t-s}^{t+h-s}\|AS(u) du\ F_{n}(s, X_{n}(s))\|_\gamma\,ds\right)^{2}}\\[14pt]
\ds{\leq c\int_t^{t+h} (t+h-s)^{-2\gamma} ds \int_0^{T}  \|F_{n}(s,X_{n}(s))\|^{2} ds}\\[14pt]
\ds{+c \int_0^t \left(\int_{t-s}^{t+h-s}\|(-A)^{\gamma+1} S(r)\|_{\mathcal{L}(H)}\,dr\right)^2 ds \int_0^T \|F_n(s,X_n(s))\|^{2}\,ds.}
	\end{array}\]
Therefore, in view of Hypothesis \ref{h3-app}, we have
\[
\begin{array}{l}
\ds{\mathbb{E}^n\left(\sup_{\delta\in(0,1)}\sup_{t\in [0,T-\delta]}\sup_{h \in (0,\delta)}\|(\Psi_n(t+h)- \Psi_n(t))\|_\gamma^{2} h^{-2\beta}\right)}\\[14pt]
\ds{\leq \, c_T\sup_{\delta\in(0,1)}\sup_{t\in [0,T-\delta]}\sup_{h \in (0,\delta)}h^{-2\beta}\int_t^{t+h} (t+h-s)^{-2\gamma}\,ds^{}\,V(x)}\\[14pt]
\ds{+ c_T\sup_{h_0\in(0,1)}\sup_{t\in [0,T-h_0]}\sup_{h \in (0,h_0)} h^{-2 \beta}\int_0^t \left(\int_{t-s}^{t+h-s}r^{-(\gamma+1) }\, dr\right)^{2} ds^{}\,V(x).}
	\end{array}
\]
Since 
\[\big((t-s)^{-\gamma}-(t+h-s)^{-\gamma })^{2} \leq  (t-s)^{-2\gamma} - (t+h-s)^{-2\gamma  },\]
 we have
\[
\begin{array}{l}
\ds{\mathbb{E}^n\left(\sup_{\delta\in(0,1)}\sup_{t\in [0,T-\delta]}\sup_{h \in (0,\delta)}\|(\Psi_n(t+h)- \Psi_n(t)) \|_\gamma^{2}\,h^{-2\eta}\right)}\\[18pt]
\ds{\leq  c_T\,V(x)\sup_{h \in (0,1)}\left(h^{-2\beta } h^{-2\gamma +1}  +  h^{-2\beta }\int_0^t  \left((t+h-s)^{-2\gamma  }- (t-s)^{-2\gamma }\right) ds \right).}
\end{array}
\]
Moreover, since $1-2\gamma<1$, we have 
\[(t+h)^{-2\gamma +1}- t^{-2\gamma +1} \leq h^{-2\gamma +1}.\] Therefore, if we take $\beta<1/2-\gamma$, we obtain
\[
\begin{array}{l}
\ds{\mathbb{E}^n\left(\sup_{h_0\in(0,1)}\sup_{t\in [0,T-h_0]}\sup_{h \in (0,h_0)}\|(\Psi_n(t+h)- \Psi_n(t)) \|_\gamma^{2}\,h^{-2\beta}\right)}\\[18pt]
\ds{\leq  c_T\,V(x)\sup_{h\in(0,1)}h^{1-2\gamma-2\beta}\leq c_T\,V(x).}
\end{array}
\]
Notice that when $\gamma =0$, the result holds for all $\beta \in (0,1/2)$.
\end{proof}

\medskip

For every $n \in\,\mathbb{N}$, we define
\[y_n(t):=\Psi_n(t) + W_A^n(t),\ \ \ \ t \in [0,T].\]
As a consequence of \eqref{app5} and Lemmas \ref{l0.3} and  \ref{l0.4}, there exist  $\gamma, \eta>0$ such that
\begin{equation} \label{app10}
\sup_{n \in \mathbb{N}}\,\mathbb{E}^n \sup_{t \in [0,T]} \|y_{n}(t)\|_{\gamma}^{2} <\infty,
\end{equation}
and 
\begin{equation}  \label{app11}
\sup_{n \in \mathbb{N}}\,\mathbb{E}^n \sup_{t,s, \in [0,T]}\left(\frac{\|y_{n}(t)-y_{n}(s)\|} {|t-s|^{\beta}}\right)^2<\infty.
\end{equation}
This implies the following tightness result.
\begin{Lemma}\label{l4.1.6}
Under Hypotheses \ref{h1-app}, \ref{h2-app} and \ref{h3-app}, for every $x \in\,D_V$ the family of measures $\{\mathcal{L}( X_{n})\}_{n \in \mathbb{N}}$ is tight in   $C([0,T];H))$. 
\end{Lemma}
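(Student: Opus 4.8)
The plan is to follow the same scheme as the proof of tightness for the slow motion in Section~\ref{sec3}. First I would use the decomposition $X_n(t) = S(t)x + y_n(t)$, where $y_n(t):=\Psi_n(t)+W_A^n(t)$ is the process introduced right before \eqref{app10} and the identity is exactly \eqref{eq mgs1}. Since $x\in D_V\subseteq H$ and $S(\cdot)$ is strongly continuous, the curve $t\mapsto S(t)x$ belongs to $C([0,T];H)$, and the affine map $\Phi:y\mapsto S(\cdot)x+y$ is a homeomorphism of $C([0,T];H)$ onto itself. Since $\mathcal{L}(X_n)$ is the push-forward of $\mathcal{L}(y_n)$ under $\Phi$, it suffices to show that $\{\mathcal{L}(y_n)\}_{n\in\mathbb{N}}$ is tight in $C([0,T];H)$.

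Next, I would fix the exponents $\gamma\in(0,\gamma_0)$ and $\beta>0$ for which \eqref{app10} and \eqref{app11} hold, and for $R>0$ introduce the set
\[
K_R := \left\{\, y \in C([0,T];H)\ :\ \sup_{t\in[0,T]}\|y(t)\|_\gamma \leq R,\ \ \sup_{\substack{t,s\in[0,T]\\ t\neq s}}\frac{\|y(t)-y(s)\|}{|t-s|^\beta}\leq R\,\right\}.
\]
I claim $K_R$ is compact in $C([0,T];H)$. Indeed, it is closed under uniform convergence (both defining conditions pass to uniform limits, the first because $\{z\in H:\|z\|_\gamma\leq R\}$ is closed in $H$), and it is relatively compact by the Arzel\`a--Ascoli theorem: the elements of $K_R$ are uniformly $\beta$-H\"older continuous, hence equicontinuous, and for each $t$ the section $\{y(t):y\in K_R\}$ is contained in $\{z\in H:\|z\|_\gamma\leq R\}$, which is compact in $H$ thanks to the compact embedding $D((-A)^\gamma)\hookrightarrow H$.

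Then, by Markov's inequality together with \eqref{app10} and \eqref{app11},
\[
\sup_{n\in\mathbb{N}}\mathbb{P}^n\left(y_n\notin K_R\right)\ \leq\ \frac{1}{R^2}\left(\sup_{n}\mathbb{E}^n\sup_{t\in[0,T]}\|y_n(t)\|_\gamma^2 \,+\, \sup_{n}\mathbb{E}^n\sup_{t\neq s}\left(\frac{\|y_n(t)-y_n(s)\|}{|t-s|^\beta}\right)^2\right)\ =:\ \frac{C}{R^2}.
\]
Given $\eta>0$, choosing $R_\eta:=\sqrt{C/\eta}$ yields $\sup_n\mathbb{P}^n(y_n\notin K_{R_\eta})\leq\eta$; since $K_{R_\eta}$ is compact and $\eta$ is arbitrary, $\{\mathcal{L}(y_n)\}$ is tight, and transporting through $\Phi$ (the relevant compact set for $X_n$ being $S(\cdot)x + K_{R_\eta}$) gives the tightness of $\{\mathcal{L}(X_n)\}$ in $C([0,T];H)$.

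I do not expect a genuine obstacle here, since the argument is essentially the one already carried out in Section~\ref{sec3}. The only two points deserving care are: (a) keeping the deterministic term $S(\cdot)x$ separate --- it is merely continuous and, unless $x\in D((-A)^\gamma)$, it is not bounded in $D((-A)^\gamma)$ near $t=0$, so it must be handled through the homeomorphism $\Phi$ rather than absorbed into $K_R$; and (b) the verification that $K_R$ is precompact in $C([0,T];H)$, which genuinely relies on the compactness of the embedding $D((-A)^\gamma)\hookrightarrow H$ in order to apply Arzel\`a--Ascoli in this infinite-dimensional setting.
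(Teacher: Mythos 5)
Your proof is correct and follows essentially the same route as the paper: the decomposition $X_n = S(\cdot)x + y_n$, the compact sets $K_R$ defined via a bound in $C([0,T];D((-A)^\gamma))$ plus a H\"older seminorm, Chebyshev/Markov with \eqref{app10}--\eqref{app11}, and the shift by the compact curve $S(\cdot)x$. The only differences are cosmetic (you bound the two seminorms separately and get a $C/R^2$ tail instead of the paper's $c_T/R$, and you spell out the Arzel\`a--Ascoli and homeomorphism details that the paper leaves implicit).
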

\begin{proof}
Let $\gamma$ and $\eta$ be the same as the ones in \eqref{app10} and \eqref{app11}. For every $R>0$, we introduce the set
\[
K_R:= \{ u \in C([0,T];H)\ :\ \Vert u \Vert_{C([0,T];D((-A)^\gamma))}+\sup_{t,s\in [0,T]} \frac{\|u(t)- u(s)\Vert}{|t-s|^{\beta}} \leq R \}.
\]
By Ascoli-Arzela theorem, $K_R$ is  compact in $C([0,T];H)$. Moreover, according to \eqref{app10} and \eqref{app11}, we have
\[
\sup_{n \in \mathbb{N}}\,\mathbb{P}^{n}\left(y_n\in K_R^c\right)\leq c_T R^{-1}.
\]
Then, for every $\eta>0$ we can find $R_\eta>0$ such that
\[
\sup_{n \in \mathbb{N}}\,\mathbb{P}^{n}\left(y_n\in K_{R_\eta}^c\right)\leq \eta.
\]
Since $X_n= y_n+ S(\cdot)x$, we have 
\[
\mathbb{P}^{n}\left(X_n \in K_M^c + S(\cdot)x\right)=\mathbb{P}^{n}\left(y_n\in K_M^c\right)\leq \eta.
\]
Therefore,  since $K_{R_\eta}+S(\cdot)x$ is a compact set in $C([0,T];H)$, due to the arbitrariness of $\eta$ we  conclude that the family of measures $\{ \mathcal{L}(X_{n})\}_{n \in \mathbb{N}}$ is tight in $C([0,T];H)$. 
\end{proof}

\subsection{Construction of a martingale solution}\label{existence}

In view of  Lemma \ref{l4.1.6}, the sequence $\{(X_n, W_n)\}_{n \in\,\mathbb{N}}\subset C([0,T];H)\times C([0,T];\mathcal{D}^\prime)$ is tight. Due to the Prokhorov theorem, there exists a subsequence, still denoted by $\{(X_n, W_n)\}_{n \in\,\mathbb{N}}$, that converges weakly to some measure $\mu$ on $C([0,T];H)\times C([0,T];\mathcal{D}^\prime)$. By Skorohod's theorem we can find a probability space $(\bar{\Omega}, \bar{\mathcal{F}}, \bar{\mathbb{P}})$ and a sequence of random variables $\{(\bar{X}_{n}, \bar{W}^n)\}_{n \in \mathbb{N}}$, and $(\bar{X},\bar{W})$ with values in $C([0,T];H)\times C([0,T];\mathcal{D}^\prime)$, such that 
\begin{equation} \label{app12}
\mathcal{L}(\bar{X}_n, \bar{W}^n)= \mathcal{L}({X}_n, W^n),\ \ \ \ \ \ \mathcal{L}(\bar{X}, \bar{W})=\mu,
\end{equation} 
and \begin{equation}
 \label{app16}
 \lim_{n\to\infty} (\bar{X}_n,\bar{W}^n)=(\bar{X}, \bar{W}),\ \ \ \ \bar{\mathbb{P}}-\text{a.s. in}\ C([0,T];H)\times C([0,T];\mathcal{D}^\prime).
 \end{equation}
 Notice that since $\mathcal{L}(\bar{W}^n)=\mathcal{L}({W}^n)$ and $W^n$ is a space-time white noise, for every $n \in\,\mathbb{N}$, then $\bar{W}$ is a space-time white noise.

 For any  $\xi\in C^1([0,T]; D(A^*))$ and $t \in\,[0,T]$, we  define
\begin{equation}
\label{app17}
\begin{aligned}
\bar{W}_{\xi}^n(t):= \langle \bar{X}_n(t), &\xi(t)\rangle- \langle  x, \xi(0)\rangle-\int_0^{t} \langle \bar{X}_n(s), \xi'(s)+A^*\xi(s) \rangle ds \\
&- \int_0^{t} \langle  F_n(s, \bar{X}_n(s)) , \xi(s) \rangle ds.
\end{aligned}
\end{equation}
Recalling that $X_n$ satisfies equation \eqref{eq1.2}, thanks to \eqref{app12}, we have
\[
\bar{W}_{\xi}^n(t)=\int_0^t \langle Q d\bar{W}^n_s, \xi(s)\rangle,\ \ \ \ \ t \in\,[0,T].
\]
Hence,
thanks to \eqref{app16}, we have
\begin{equation}
\label{app20}
\lim_{n\to\infty} \bar{W}_{\xi}^n=\int_0^t \langle Q d\bar{W}_s, \xi(s)\rangle,\ \ \ \ \ \text{in}\ C([0,T]),\ \ \ \ \bar{\mathbb{P}}-\text{a.s.}	\end{equation}

Therefore, if we define 
$\bar{\mathcal{F}}_t:=\sigma (\bar{W}_s,\ s\leq t)$, for $t \in\,[0,T]$, we conclude that 
\[(\bar{\Omega}, \bar{\mathcal{F}}, \{\bar{\mathcal{F}}_t\}_{t \in\,[0,T]},\bar{\mathbb{P}}, \bar{W}, \bar{X})\] is a martingale solution for equation \eqref{eq-a}, once we prove the following result.

\begin{Lemma}
\label{lemma4.6}
There exists a subsequence of $\{ \bar{W}_{\xi}^n\}_{n\in \mathbb{N}}$,  still indexed by $n$,  such that 
\begin{equation}\label{eq4.1}
\begin{aligned}
\lim_{n \to \infty} \bar{W}_{\xi}^n= \langle \bar{X}, &\xi\rangle-\langle x, \xi(0) \rangle\\
&-\int_0^\cdot \langle \bar{X}(t), \xi'(s)+A^*\xi(s) \rangle ds - \int_0^\cdot \langle  F(s, \bar{X}(s)) , \xi(s) \rangle ds,
\end{aligned}
\end{equation}
in $C([0,T];H)$,  $\bar{\mathbb{P}}$-almost surely.
\end{Lemma}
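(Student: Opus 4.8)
The plan is to pass to the limit in the identity (\ref{app17}) defining $\bar W^n_\xi$, treating separately the terms that are linear in the process and the single nonlinear term. Two preliminary observations are needed. First, since $\mathbb{P}^n(X_n(t)\in D(F)\ \text{for all}\ t\in[0,T])=1$ and $\mathcal{L}(\bar X_n,\bar W^n)=\mathcal{L}(X_n,W^n)$, we also have $\bar X_n(t)\in D(F)$ for all $t$, $\bar{\mathbb{P}}$-a.s., for every $n$. Second, combining the uniform bound $\int_0^T\bar{\mathbb{E}}\,V(\bar X_n(t))\,dt=\int_0^T\mathbb{E}^n V(X_n(t))\,dt\le c\,V(x)$, coming from (\ref{app12}) and (\ref{eqh3}), with the lower semicontinuity of $V$, Fatou's lemma, and the a.s.\ convergence (\ref{app16}), one gets $\int_0^T\bar{\mathbb{E}}\,V(\bar X(t))\,dt\le c\,V(x)<\infty$; hence $\bar X(t)\in D_V\subseteq D(F)$ for a.e.\ $t\in[0,T]$, $\bar{\mathbb{P}}$-a.s., so that $F(s,\bar X(s))$ is well defined for a.e.\ $s$ and the right-hand side of (\ref{eq4.1}) makes sense. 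Finally, since $\xi\in C^1([0,T];D(A^*))$ the map $s\mapsto\xi'(s)+A^*\xi(s)$ is continuous and bounded with values in $H$, so (\ref{app16}) already gives, $\bar{\mathbb{P}}$-a.s.\ and uniformly in $t\in[0,T]$, the convergence of $\langle\bar X_n(t),\xi(t)\rangle$ to $\langle\bar X(t),\xi(t)\rangle$ and of $\int_0^t\langle\bar X_n(s),\xi'(s)+A^*\xi(s)\rangle\,ds$ to $\int_0^t\langle\bar X(s),\xi'(s)+A^*\xi(s)\rangle\,ds$. It therefore remains to prove that, along a subsequence,
\[
\sup_{t\in[0,T]}\Bigl|\int_0^t\langle F_n(s,\bar X_n(s))-F(s,\bar X(s)),\xi(s)\rangle\,ds\Bigr|\longrightarrow 0,\qquad \bar{\mathbb{P}}\text{-a.s.}
\]

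The core of the argument is a three-term decomposition through the smooth approximants. Set $K:=\sup_{t\in[0,T]}\|\xi(t)\|$ and, for $m\in\mathbb{N}$, insert $F^{\theta_m}$ evaluated at $\bar X_n(s)$ and at $\bar X(s)$ (recalling $F_n=F^{\theta_n}$):
\[
F_n(s,\bar X_n(s))-F(s,\bar X(s))=\bigl[F^{\theta_n}(s,\bar X_n(s))-F^{\theta_m}(s,\bar X_n(s))\bigr]+\bigl[F^{\theta_m}(s,\bar X_n(s))-F^{\theta_m}(s,\bar X(s))\bigr]+\bigl[F^{\theta_m}(s,\bar X(s))-F(s,\bar X(s))\bigr].
\]
By (\ref{app18}) and the triangle inequality, the first and third terms are bounded in norm by $(\theta_n+\theta_m)V(\bar X_n(s))$ and $\theta_m V(\bar X(s))$ on $D(F)$; pairing with $\xi(s)$ and using the $V$-bounds above, their contributions to $\int_0^T\bar{\mathbb{E}}|\cdot|\,ds$ are at most $K\,c\,V(x)\,(\theta_n+2\theta_m)$. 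For the middle term, Condition 1 of Hypothesis \ref{h2-app} gives that $x\mapsto\langle F^{\theta_m}(s,x),\xi(s)\rangle$ is continuous, and since $F^{\theta_m}$ is bounded (by some $C_m<\infty$) the integrand $\langle F^{\theta_m}(s,\bar X_n(s))-F^{\theta_m}(s,\bar X(s)),\xi(s)\rangle$ is dominated by $2C_mK$ and tends to $0$ for a.e.\ $(s,\omega)$ by (\ref{app16}); dominated convergence then yields $\int_0^T\bar{\mathbb{E}}\bigl|\langle F^{\theta_m}(s,\bar X_n(s))-F^{\theta_m}(s,\bar X(s)),\xi(s)\rangle\bigr|\,ds\to 0$ as $n\to\infty$, for each fixed $m$. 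Using $\sup_t|\int_0^t g(s)\,ds|\le\int_0^T|g(s)|\,ds$ and collecting the three bounds,
\[
\bar{\mathbb{E}}\sup_{t\in[0,T]}\Bigl|\int_0^t\langle F_n(s,\bar X_n(s))-F(s,\bar X(s)),\xi(s)\rangle\,ds\Bigr|\le K\,c\,V(x)\,(\theta_n+2\theta_m)+\varepsilon_{m,n},
\]
where $\varepsilon_{m,n}\to 0$ as $n\to\infty$ for each fixed $m$. Choosing first $m$ large (so that $2K\,c\,V(x)\,\theta_m$ is small) and then $n$ large, the left-hand side tends to $0$.

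Thus the nonlinear term converges to $0$ in $L^1(\bar\Omega;C([0,T]))$, hence in probability; passing to a further subsequence makes this convergence $\bar{\mathbb{P}}$-a.s.\ and uniform in $t$, and combining it with the a.s.\ uniform convergence of the linear terms noted above yields (\ref{eq4.1}). The main obstacle is exactly this nonlinear term: because $F$ is only measurable, with neither continuity nor growth control, one cannot take the limit directly, and the argument relies entirely on routing through the approximants $F^{\theta_m}$ — using the functional $V$ to absorb the $F-F^{\theta_m}$ errors and the uniform $V$-integral bound (\ref{eqh3}) to keep them uniform in $n$ — while only the weak continuity of $F^{\theta_m}$ is exploited, which suffices because every occurrence is tested against the fixed function $\xi$.
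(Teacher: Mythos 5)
Your proof is correct and follows essentially the same route as the paper: reduce to the nonlinear term, insert the approximants $F^{\theta_m}$, absorb the $F-F^{\theta_m}$ errors with \eqref{app18} and the uniform bound \eqref{eqh3} (plus Fatou and lower semicontinuity of $V$ for the limit process), handle the middle term by the strong--weak continuity and boundedness of $F^{\theta_m}$ via dominated convergence, and finally pass to a subsequence to upgrade $L^1$ convergence to almost sure convergence. The only differences are cosmetic (merging the two error terms at $\bar X_n$ into one and quantifying with $m$ then $n$ instead of the paper's $\eta$, $m_\eta$ and liminf argument).
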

\begin{proof}
Due to \eqref{app16}, we only need to prove the convergence  of the fourth term on the right hand side  of equation \eqref{app17}.
We have
\[
\begin{aligned}
\liminf_{n \to \infty}\,\bar{\mathbb{E}} \sup_{t \in [0,T]}\left| \int_0^t \langle  F_n(s,\bar{X}_{n}(s)), \xi(s) \rangle ds-   \int_0^t \langle  F_{}(s,\bar{X}_{}(s)), \xi(s) \rangle \,ds\right|\\
 \leq  \liminf_{n \to \infty}\, \bar{\mathbb{E}} \int_0^T |\langle F_n(s,\bar{X}_{n}(s))- F(s,\bar{X}_{}(s)), \xi(s)\rangle|  \,ds. \\
\end{aligned}
\]
Due to \eqref{app18}, for every $n \in\,\mathbb{N}$ and $s \in\,[0,T]$, we have
\[\begin{array}{l}
\ds{|\langle F_n(s,\bar{X}_{n}(s))- F(s,\bar{X}_{}(s)), \xi(s)\rangle|}	\\[14pt]
\ds{\leq |\langle F_n(s,\bar{X}_{n}(s))- F(s,\bar{X}_{n}(s)), \xi(s)\rangle|+|\langle F(s,\bar{X}_{n}(s))- F(s,\bar{X}_{}(s)), \xi(s)\rangle|}\\[14pt]
\ds{\leq c_\xi \,\theta_n \,V(\bar{X}_n(s))+|\langle F(s,\bar{X}_{n}(s))- F(s,\bar{X}_{}(s)), \xi(s)\rangle|.}
\end{array}\]
Therefore, thanks to \eqref{eqh3} and \eqref{app12}, 
\begin{equation}  \label{app19}
\begin{array}{l}
\ds{\liminf_{n \to \infty}\bar{\mathbb{E}} \sup_{t \in [0,T]}\left| \int_0^t \langle  F_n(s,\bar{X}_{n}(s)), \xi(s) \rangle ds-   \int_0^t \langle  F_{}(s,\bar{X}_{}(s)), \xi(s) \rangle ds\right|}\\[14pt]
\ds{\leq c_\xi\,\theta_n \,V(x) 
  +  \liminf_{n \to \infty}\, \bar{\mathbb{E}}\int_0^T \left| \langle F(s,\bar{X}_{n}(s))- F(s,\bar{X}_{}(s)),\xi(s)\rangle \right| \,ds .}
\end{array}
\end{equation}

Now, for every $n,m \in\,\mathbb{N}$ and $s \in\,[0,T]$, we have
\[\begin{array}{l}
\ds{\langle F(s,\bar{X}_{n}(s))- F(s,\bar{X}_{}(s)),\xi(s)\rangle =\langle F(s,\bar{X}_{n}(s))- F_m(s,\bar{X}_{n}(s)),\xi(s)\rangle}\\[14pt]
\ds{ +\langle F_m(s,\bar{X}_{n}(s))- F_m(s,\bar{X}_{}(s)),\xi(s)\rangle +\langle F_m(s,\bar{X}(s))- F(s,\bar{X}_{}(s)),\xi(s)\rangle, }	
\end{array}\]
so that, thanks to \eqref{app18} 
\begin{equation}  \label{app21} \begin{array}{l}
\ds{\int_0^T\left|\langle F(s,\bar{X}_{n}(s))- F(s,\bar{X}_{}(s)),\xi(s)\rangle\right|\,ds\leq c_\xi\,\theta_m\left(\int_0^T V(\bar{X}_n(s))\,ds+\int_0^T V(\bar{X}(s))\,ds\right)}\\[14pt]
\ds{+\int_0^T\left|\langle F_m(s,\bar{X}_{n}(s))- F_m(s,\bar{X}_{}(s)),\xi(s)\rangle\right|\,ds.}	
\end{array}\end{equation}
In view of Hypothesis \ref{h3-app}, 
for every $t \in [0,T]$ and $x \in D_V$, we have
\begin{equation}
\label{app15}
\int_0^T\bar{\mathbb{E}}\, V(\bar{X}(t))\,dt  \leq c\, V(x).
\end{equation}
Actually, since $V$ is lower semicontinuous, for every $t \in\,[0,T]$ we have 
\[
V(\bar{X}(t)) = V(\liminf_{n \to \infty} \bar{X}_n(t)) \leq \liminf_{n \to \infty} V(\bar{X}_n(t)).
\]
Therefore, 
\[
\int_0^T\bar{\mathbb{E}}\,V(\bar{X}(t))\,dt  \leq \liminf_{n \to \infty} \int_0^T\bar{\mathbb{E}}\, V(\bar{X}_n(t))\,dt=\liminf_{n \to \infty} \int_0^T{\mathbb{E}^n}\, V({X}_n(t))\,dt,
\]
and thanks to Hypothesis \ref{h3-app}, this implies \eqref{app15}.

Now, \eqref{app15},  together with \eqref{app21} and \eqref{app19}, imply
\[
\begin{array}{l}
\ds{\liminf_{n \to \infty}\bar{\mathbb{E}} \sup_{t \in [0,T]}\left| \int_0^t \langle  F_n(s,\bar{X}_{n}(s)), \xi(s) \rangle ds-   \int_0^t \langle  F_{}(s,\bar{X}_{}(s)), \xi(s) \rangle ds\right|}\\[14pt]
\ds{\leq c_\xi\,(\theta_n +\theta_m)\,V(x) 
  +  \liminf_{n \to \infty} \bar{\mathbb{E}}\int_0^T \left| \langle F_m(s,\bar{X}_{n}(s))- F_m(s,\bar{X}_{}(s)),\xi(s)\rangle \right| \,ds.}
\end{array}
\]
If we fix an arbitrary $\eta>0$, we can find $m_\eta \in\,\mathbb{N}$ such that $c_\xi \theta_{m_\eta} V(x)<\eta$, so that
\[
\begin{array}{l}
\ds{\liminf_{n \to \infty}\bar{\mathbb{E}} \sup_{t \in [0,T]}\left| \int_0^t \langle  F_n(s,\bar{X}_{n}(s)), \xi(s) \rangle ds-   \int_0^t \langle  F_{}(s,\bar{X}_{}(s)), \xi(s) \rangle ds\right|}\\[14pt]
\ds{\leq \eta + c_\xi\,\theta_n\,V(x) 
  +  \liminf_{n \to \infty} \bar{\mathbb{E}}\int_0^T \left| \langle F_{m_\eta}(s,\bar{X}_{n}(s))- F_{m_\eta}(s,\bar{X}_{}(s)),\xi(s)\rangle \right| \,ds.}
\end{array}
\]
Since $F_{m_\eta}(t,\cdot):H\to H$ is strong-weak continuous and bounded and \eqref{app16} holds, we can take the limit above, as $n\to\infty$, and, due to the arbitrariness of $\eta$ we conclude 
\[\liminf_{n \to \infty}\bar{\mathbb{E}} \sup_{t \in [0,T]}\left| \int_0^t \langle  F_n(s,\bar{X}_{n}(s)), \xi(s) \rangle ds-   \int_0^t \langle  F_{}(s,\bar{X}_{}(s)), \xi(s) \rangle ds\right|=0.\]

Finally, by extracting possibly another subsequence again,  we have
\begin{equation}
\lim_{n \to \infty} \sup_{t \in [0,T]}| \int_0^t \langle F_{n}(s,\bar{X}_{n}(s)), \xi(s) \rangle ds-   \int_0^t \langle F_{}(s,\bar{X}_{}(s)), \xi(s) \rangle ds|=0,
\end{equation}
 $\bar{\mathbb{P}}-$almost surely, and our lemma  follows.
\end{proof}

\end{document}